\newtheorem{Theorem}{Theorem}[section]
\newtheorem{Proposition}{Proposition}[section]
\newtheorem{Lemma}{Lemma}[section]
\newtheorem{Claim}{Claim}[section]
\newtheorem{con}{Conjecture}
\newtheorem{prob}{Problem}
\newtheorem{Corollary}{Corollary}[section]
\newtheorem{thm}{Theorem}
\newtheorem{s-Theorem}{Theorem}[subsection]
\newtheorem{s-Proposition}{Proposition}[subsection]
\newtheorem{s-Conjecture}{Conjecture}[subsection]
\newtheorem{s-Lemma}{Lemma}[subsection]
\newtheorem{s-Claim}{Claim}[subsection]
\newtheorem{s-Problem}{Problem}[subsection]
\newtheorem{s-Corollary}{Corollary}[subsection]
\newtheorem{Definition}{Definition}[section]
\newtheorem{Notation}{Notation}[section]
\newtheorem{Remark}{Remark}[section]
\newtheorem{s-Definition}{Definition}[subsection]
\newtheorem{s-Notation}{Notation}[subsection]
\newtheorem{s-Remark}{Remark}[subsection]
\newtheorem{s-Example}{Example}[subsection]
\DeclareRobustCommand{\qed}{%
  \ifmmode 
  \else \leavevmode\unskip\penalty9999 \hbox{}\nobreak\hfill
  \fi
  \quad\hbox{\qedsymbol}}
\newcommand{\openbox}{\leavevmode
  \hbox to.77778em{%
  \hfil\vrule
  \vbox to.675em{\hrule width.6em\vfil\hrule}%
  \vrule\hfil}}
\newcommand{\qedsymbol}{\openbox}
\newenvironment{proof}[1][\proofname]{\par
  \vskip \parsep%
  \normalfont
  {\itshape
  #1{.}}\quad\mdseries \ignorespaces
}{%
  \qed
  {\vskip \belowdisplayskip}
}
\newcommand{\proofname}{Proof}
\begin{document}
\title{{\bf Effective non-vanishing of global sections of multiple adjoint bundles for polarized \textrm{\boldmath $4$}-folds}
\thanks{2000 {\it Mathematics Subject Classification.} 
Primary 14C20; Secondary 14C17, 14J30, 14J35, 14J40}
\thanks{{\it Key words and phrases.} Polarized manifold, adjoint bundles, 
the $i$-th sectional geometric genus.}
\thanks{This research was partially supported by the Grant-in-Aid for Scientific Research (C)
(No.20540045), Japan Society for the Promotion of Science, Japan.}}
\author{YOSHIAKI FUKUMA}
\date{}
\maketitle
\begin{abstract}
Let $X$ be a smooth complex projective variety of dimension $4$ and let $L$ be an ample line bundle on $X$.
In this paper, we study a natural number $m$ such that $h^{0}(m(K_{X}+L))>0$  for any polarized $4$-folds $(X,L)$ with $\kappa(K_{X}+L)\geq 0$.
\end{abstract}

\section{Introduction}
Let $X$ be a smooth projective variety of dimension $n$ defined over the field of complex numbers and let $L$ be an ample (resp. nef and big) line bundle on $X$.
Then the pair $(X,L)$ is called a polarized (resp. quasi-polarized) manifold.
\par
Then there are the following conjectures.

\begin{con}\label{Conjecture}
\begin{itemize}
\item [\rm (i)] {\rm \textbf{(Ionescu \cite[Open problems, P.321]{LPS93})}}
Let $(X,L)$ be a quasi-polarized manifold of dimension $n$.
Assume that $K_{X}+L$ is nef.
Then $h^{0}(K_{X}+L)>0$. 
\item [\rm (ii)] {\rm \textbf{(Ambro \cite[Section 4]{Ambro}, Kawamata \cite[Conjecture 2.1]{Kawamata})}}
Let $X$ be a complex normal variety, 
$B$ an effective $\mathbb{R}$-divisor on $X$ 
such that the pair $(X,B)$ is KLT, and $D$ a Cartier divisor on $X$.
Assume that $D$ is nef, and that $D-(K_{X}+B)$ is nef and big.
Then $h^{0}(D)>0$.
\end{itemize}
\end{con}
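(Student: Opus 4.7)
The plan is to address Conjecture~\ref{Conjecture} by treating part~(ii) as the primary statement and recovering~(i) from it: setting $B=0$ and $D=K_{X}+L$ inside~(ii), the smoothness of $X$ makes $(X,0)$ automatically KLT, and $D-K_{X}=L$ is ample, hence nef and big, so conjecture~(i) falls out of~(ii) as soon as~(ii) is established. The attack on~(ii) then splits naturally into a vanishing step and a Riemann--Roch step, each of which I would handle in turn.

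For the vanishing step, I would invoke the Kawamata--Viehweg vanishing theorem in its KLT form: since $M:=D-(K_{X}+B)$ is nef and big and $(X,B)$ is KLT, one obtains $H^{i}(X,\mathcal{O}_{X}(D))=0$ for every $i\geq 1$, possibly after passing to a log resolution $\pi\colon Y\to X$ in which the discrepancy divisor is handled by the projection formula. This reduces the non-vanishing statement $h^{0}(D)>0$ to the numerical statement $\chi(X,D)>0$.

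For the Riemann--Roch step, I would expand $\chi(D)$ via Hirzebruch--Riemann--Roch and rewrite the resulting intersection numbers in terms of the $i$-th sectional geometric genera $g_{i}$ of the polarized pair — the invariants central to this paper's framework — then appeal to Khovanskii--Teissier, Hodge-index, and Miyaoka--Yau type inequalities to force positivity of $\chi(D)$. In the polarized case of~(i), I would additionally try to induct on $n$ by taking a general member $X'\in|L|$ and using the adjunction identity $(K_{X}+L)|_{X'}=K_{X'}$, passing from a geometric-genus statement on $X'$ back to $h^{0}(K_{X}+L)$ on $X$ once the restriction map is controlled by a separate vanishing argument on $H^{1}(X,K_{X})$.

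The main obstacle is precisely the Riemann--Roch step: establishing $\chi(D)>0$ at the boundary of the above numerical inequalities. Concretely, one must rule out hypothetical minimal counterexamples on $4$-folds whose sectional invariants $g_{0},g_{1},g_{2},g_{3},g_{4}$ degenerate simultaneously to values making $\chi(D)$ drop to zero or turn negative — a fine classification which, in dimension $4$, remains out of reach by current techniques. This is the reason the present paper must content itself with an effective weakening rather than the conjecture as stated, and any genuine attack on the conjecture will have to supply new tools for ruling out these extremal configurations (for instance, via a stronger adjunction-theoretic structure theorem for polarized $4$-folds with small $g_{i}$).
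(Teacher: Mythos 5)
The statement you are trying to prove is Conjecture~\ref{Conjecture}, which the paper records as an \emph{open conjecture} (due to Ionescu, and to Ambro--Kawamata); the paper offers no proof of it, and explicitly notes that part (i) is known only for $\dim X\leq 3$ and part (ii) only for $\dim X\leq 2$. The paper's actual contribution is the weaker effective statement of Theorem~\ref{MainTheorem1}, namely non-vanishing of $h^{0}(2m(K_{X}+L))$ for suitable $m$ on $4$-folds. Your proposal is therefore not comparable to a proof in the paper, and more importantly it is not a proof at all: you yourself concede that the decisive step --- forcing $\chi(D)>0$ after Kawamata--Viehweg vanishing --- ``remains out of reach by current techniques.'' A plan whose central step is acknowledged to be unestablished is a research programme, not an argument. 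The reduction of (i) to (ii) via $B=0$, $D=K_{X}+L$ is fine, and the vanishing step correctly reduces the problem to $\chi(D)\geq 1$, but everything after that is a list of named inequality types (Khovanskii--Teissier, Hodge index, Miyaoka--Yau) with no demonstration that they combine to give the required positivity; indeed the entire difficulty of the conjecture, and the reason this paper studies multiples of $K_{X}+L$ instead, is that they do not obviously do so.

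Two of your auxiliary steps also contain concrete errors. First, the proposed induction via a general member $X^{\prime}\in|L|$ presupposes that $|L|$ is nonempty and contains a smooth member; an ample (let alone merely nef and big) line bundle need not be effective, so no such $X^{\prime}$ need exist. Second, the surjectivity of the restriction $H^{0}(K_{X}+L)\to H^{0}(K_{X^{\prime}})$ would require $H^{1}(X,K_{X})=0$, but by Serre duality $h^{1}(K_{X})=h^{n-1}(\mathcal{O}_{X})$, which is nonzero in general (already for abelian varieties), so the ``separate vanishing argument on $H^{1}(X,K_{X})$'' you defer to cannot exist in the stated generality. If you wish to engage with this paper's results, the statements to target are Theorems~\ref{MainTheorem1} and~\ref{MainTheorem2}, whose proofs proceed by entirely different means (the sectional geometric genus calculus of Theorem~\ref{I1} and Proposition~\ref{B18}, H\"oring's generic nefness criterion, and the Beltrametti--Sommese--Fujita adjunction-theoretic reduction).
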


These conjectures have been studied by several authors (see \cite{Kawamata}, \cite{1.5}, \cite{Fukuma07}, \cite{Fukuma10}, \cite{Broustet09}, \cite{BrHo09}, \cite{Horing09}).
In particular it is known that Conjecture \ref{Conjecture} (i) is true if $\dim X\leq 3$, and Conjecture \ref{Conjecture} (ii) is true if $\dim X\leq 2$.
\par
Here we note that if $K_{X}+L$ is nef, then by \cite{Shokurov86} there exists a positive integer $m$ such that $h^{0}(m(K_{X}+L))>0$, that is, $\kappa(K_{X}+L)\geq 0$.
So, as a generalization of Conjecture \ref{Conjecture} (i),
it is natural and interesting to study the following problem, which was proposed in \cite[Problem 3.2]{Fukuma07}:

\begin{prob}\label{Problem2.8}
For any fixed positive integer $n$, determine the smallest positive integer 
$p$, which depends only on $n$, such that the following {\rm ($*$)} is satisfied:
\begin{itemize}
\item [\rm ($*$)]
$h^{0}(p(K_{X}+L))>0$ for any polarized manifold $(X,L)$ of dimension $n$ 
with $\kappa(K_{X}+L)\geq 0$.
\end{itemize}
\end{prob}

The aim of this paper is to study Problem \ref{Problem2.8}.
It is known that $p=1$ if $X$ is a curve or surface 
(see \cite[Theorem 2.8]{Fukuma07}).
For the case of $n=3$, in \cite[Theorems 5.1 and 5.2]{Fukuma10}, we proved $p\leq 2$.
Concretely, in \cite[Theorem 5.4 (2)]{Fukuma08-3} or \cite[Theorem 5.2]{Fukuma10}, we proved that if $\kappa(K_{X}+L)=3$, then $h^{0}(2(K_{X}+L))\geq 3$.
Moreover in \cite[Theorem 5.1]{Fukuma10}, we proved that $h^{0}(K_{X}+L)>0$ if $0\leq\kappa(K_{X}+L)\leq 2$.
Moreover by using a result of H\"oring (\cite[1.5 Theorem]{Horing09}) and the adjunction theory, we can get $p=1$ if $\dim X=3$.
Therefore Problem \ref{Problem2.8} was completly solved for the case of $\dim X=3$.
\par
So, as the next step, in this paper, we will treat the case of $\dim X=4$.
The main result of this paper is the following.

\begin{thm}\label{MainTheorem1}
Let $(X,L)$ be a polarized manifold of dimension $4$.
Assume that $\kappa(K_{X}+L)\geq 0$ holds.
\begin{itemize}
\item [\rm (a)]
If $0\leq \kappa(K_{X}+L)\leq 2$, then $h^{0}(2m(K_{X}+L))>0$ holds for every integer $m$ with $m\geq 1$. In particular $h^{0}(2(K_{X}+L))>0$.
\item [\rm (b)]
If $\kappa(K_{X}+L)=3$, then $h^{0}(2m(K_{X}+L))>0$ holds for every integer $m$ with $m\geq 2$. In particular $h^{0}(4(K_{X}+L))>0$.
\item [\rm (c)]
If $\kappa(K_{X}+L)=4$, then $h^{0}(2m(K_{X}+L))>0$ holds for every integer $m$ with $m\geq 3$. In particular $h^{0}(6(K_{X}+L))>0$.
\item [\rm (d)]
If $\kappa(X)\geq 0$, then 
$$h^{0}(2m(K_{X}+L))\geq \frac{(m-1)(m-2)(m^{2}+3m+6)}{12}+1$$
for every integer $m\geq 2$.
In particular $h^{0}(4(K_{X}+L))>0$.
\end{itemize}
\end{thm}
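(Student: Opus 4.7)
The argument will combine three ingredients: the author's earlier results on polarized $3$-folds recalled in the introduction, the sectional-geometric-genus machinery from the author's previous papers, and Riemann--Roch together with Kawamata--Viehweg vanishing on $X$. The four parts are handled according to the value of $\kappa(K_X+L)$ and the additional hypothesis in (d).

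For (a), (b), (c), I would reduce the $4$-dimensional assertions to three-dimensional ones. Let $X_{1}\in|L|$ be a general smooth member; by adjunction $K_{X_{1}} = (K_{X}+L)|_{X_{1}}$. The restriction sequence
\[
0 \to (2m-1)(K_{X}+L) \to 2m(K_{X}+L) \to 2mK_{X_{1}} \to 0,
\]
combined with the vanishing $h^{1}\bigl((2m-1)(K_{X}+L)\bigr) = 0$ furnished by Kawamata--Viehweg once $(2m-2)(K_{X}+L)+L$ is nef and big, gives a surjection $H^{0}(2m(K_{X}+L)) \twoheadrightarrow H^{0}(X_{1}, 2mK_{X_{1}})$. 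On $X_{1}$ the Iitaka dimension of $K_{X_{1}}$ matches $\kappa(K_{X}+L)$, and a further restriction to a general surface in $|L|_{X_{1}}|$, combined with the polarized $3$-fold results of \cite{Fukuma10}, should yield the thresholds $m\geq 1,2,3$ in (a), (b), (c).

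Part (d) exploits the stronger hypothesis $\kappa(X)\geq 0$, which forces $K_{X}$ to be $\mathbb{Q}$-effective and hence $K_{X}+L$ to be big. The plan is first to establish $h^{i}(2m(K_{X}+L))=0$ for $i\geq 1$ and $m\geq 2$ by writing
\[
2m(K_{X}+L)=K_{X}+\bigl[(2m-1)(K_{X}+L)+L\bigr],
\]
replacing $(2m-1)K_{X}$ by an effective $\mathbb{Q}$-divisor coming from $|sK_{X}|$ for suitable $s$, passing to a log resolution, and applying Kawamata--Viehweg there. Then $h^{0}(2m(K_{X}+L))=\chi(2m(K_{X}+L))$, and expanding $\chi$ as a degree-$4$ polynomial in $m$ via Riemann--Roch and the sectional geometric genera $g_{i}(X,K_{X}+L)$, together with the lower bounds $g_{i}\geq h^{i}(\mathcal{O}_{X})$ established in the author's previous work, should yield precisely $\frac{(m-1)(m-2)(m^{2}+3m+6)}{12}+1$.

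\textbf{Main obstacle.} The sharpest difficulty, concentrated in (d), is securing the higher-cohomology vanishing when the natural splitting off of $K_{X}$ produces a divisor that is big but not directly nef. The log-resolution argument and careful bookkeeping of discrepancies and fractional parts is what forces the threshold $m\geq 2$ in (d) (and $m\geq 2,3$ in (b)--(c)), and constitutes the core technical step of the proof.
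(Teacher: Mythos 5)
Your proposal has genuine gaps, and the most serious one is structural: the theorem only assumes $\kappa(K_{X}+L)\geq 0$, not that $K_{X}+L$ is nef. After a simple blowing up one has $K_{X}+L=\pi^{*}(K_{M}+\mathcal{A})+(n-2)E$, which is never nef along $E$, so your appeal to Kawamata--Viehweg via the claim that $(2m-2)(K_{X}+L)+L$ is nef and big breaks down at the very first step. The paper deals with this by passing through the first and second adjunction-theoretic reductions (Theorem \ref{TH2}): one lands on a normal Gorenstein $4$-fold $W_{3}$ with isolated terminal singularities and an ample $\mathcal{H}_{3}$ with $K_{W_{3}}+\mathcal{H}_{3}$ nef, and the identity $h^{0}(2m(K_{X}+L))=h^{0}(m(K_{W_{3}}+\mathcal{H}_{3}))$. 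The factor $2$ in the statement is precisely the Cartier index of the divisor $\mathcal{D}$ produced by the second reduction ($\mathcal{D}$ is only $2$-Cartier); your proposal offers no mechanism that would produce this factor, which is a sign that the route is wrong.

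The second gap is in your reduction of (a)--(c) to dimension $3$. For $X_{1}\in|L|$ one has $(K_{X}+L)|_{X_{1}}=K_{X_{1}}$ with \emph{no ample summand left over}, so even granting the surjection $H^{0}(2m(K_{X}+L))\twoheadrightarrow H^{0}(X_{1},2mK_{X_{1}})$ you would need effective non-vanishing of \emph{pluricanonical} systems on $3$-folds. That is not what \cite{Fukuma10} provides -- those results concern $h^{0}(m(K_{Y}+L_{Y}))$ for a polarized $3$-fold with $L_{Y}$ ample -- and it is a much harder problem whose known effective bounds are nowhere near the thresholds $m\geq 1,2,3$. The paper instead proves the nef-model statements directly in dimension $4$: fibration arguments for $0\leq\kappa\leq 3$ (Theorems \ref{Theorem1.6} and \ref{Theorem4.1}), and for the big case a delicate interplay of the change formula of Theorem \ref{I1}, Proposition \ref{B18}, and H\"oring's generic nefness of $\Omega_{X}\langle\frac{3}{4}L\rangle$ to control $c_{2}$ (Theorems \ref{Theorem4.1-big} and \ref{T-TH1}). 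Finally, for (d) the paper does not establish $h^{i}=0$ and compute $\chi$; it bounds the successive differences $h^{0}(t(K_{X}+L))-h^{0}((t-1)(K_{X}+L))$ on the nef model using $g_{2}\geq h^{1}(\mathcal{O})$ and a $c_{2}$ inequality coming from $\kappa(X)\geq 0$ (Theorem \ref{Theorem4.1.5}), then sums. Your proposed vanishing via an effective member $D\in|sK_{X}|$ would require the pair $(X,\frac{2m-1}{s}D)$ to be klt, which there is no reason to expect, so that step would also fail as stated.
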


By using the adjunction theory of Beltrametti and Sommese, Theorem \ref{MainTheorem1} is obtained from the following result, which will be proved in this paper (see Theorems \ref{Theorem4.1}, \ref{Theorem4.1-big}, \ref{T-TH1}, \ref{Theorem4.1.5} and \ref{Theorem4.2}).

\begin{thm}\label{MainTheorem2}
Let $(X,L)$ be a polarized variety of dimension $4$ such that $X$ is a normal Gorenstein projective variety with only isolated terminal singularities.
Assume that $K_{X}+L$ is nef.
\begin{itemize}
\item [\rm (a)]
If $0\leq \kappa(K_{X}+L)\leq 2$, then $h^{0}(m(K_{X}+L))>0$ for every integer $m$ with $m\geq 1$.
\item [\rm (b)]
If $\kappa(K_{X}+L)=3$, then $h^{0}(m(K_{X}+L))>0$ for every integer $m$ with $m\geq 2$.
\item [\rm (c)]
If $\kappa(K_{X}+L)=4$, then $h^{0}(m(K_{X}+L))>0$ for every integer $m$ with $m\geq 3$.
\item [\rm (d)]
If $\kappa(X)\geq 0$, then 
$$h^{0}(m(K_{X}+L))\geq \frac{(m-1)(m-2)(m^{2}+3m+6)}{12}+1$$
for every integer $m\geq 2$.
In particular $h^{0}(2(K_{X}+L))>0$.
\end{itemize}
\end{thm}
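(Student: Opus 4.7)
The plan is to combine a vanishing-theorem reduction with Riemann--Roch on the Gorenstein $4$-fold $X$, and then to analyze each case according to $\kappa(K_X+L)$ (or, in (d), $\kappa(X)$). Since $X$ has only terminal --- hence rational --- singularities, Kawamata--Viehweg vanishing is available on $X$, and for every $m\ge 1$ one has
$$m(K_X+L)-K_X \;=\; (m-1)(K_X+L)+L,$$
which is the sum of a nef and an ample divisor, hence ample. Therefore $h^{i}(X,m(K_X+L))=0$ for all $i>0$ and all $m\ge 1$, and the problem of bounding $h^{0}(m(K_X+L))$ reduces entirely to bounding the Euler characteristic $\chi(X,m(K_X+L))$. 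By Riemann--Roch on the Gorenstein $4$-fold, the latter is a polynomial of degree $\le 4$ in $m$ whose coefficients are intersection numbers of $K_X$, $L$ and the Chern classes of $X$; these can be repackaged in terms of the sectional geometric genera $g_{i}(X,L)$ of the pair, the invariant featured in the key words of the paper.

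For case (a), the assumption $\kappa(K_X+L)\le 2$ puts us below the expected dimension, and I would exploit the Iitaka fibration of $K_X+L$ by studying the general fiber $F$, on which $K_F+L|_F$ is $\mathbb Q$-trivial; the fibers are log Del Pezzo-like and the non-vanishing of $h^{0}(m(K_X+L))$ should be pulled back, via a positivity/pushforward analysis, from a non-vanishing statement on the $\le 2$-dimensional base, where Conjecture~\ref{Conjecture}~(i) is already known. Cases (b) and (c) are to be handled by direct estimation of the polynomial $\chi(m(K_X+L))$: in (b), $\kappa(K_X+L)=3$, the Iitaka fibration is generically a conic-bundle-like map to a threefold, and $L$-positivity along the base forces $\chi>0$ starting at $m=2$; in (c), bigness $(K_X+L)^{4}>0$ drives the leading term of $\chi$, and the task is to show that from $m=3$ onward this leading contribution dominates whatever negative lower-order coefficients may arise.

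Case (d) is the most quantitative: the hypothesis $\kappa(X)\ge 0$, propagated down the tower of successive general $L$-sections (threefold, then surface, then curve), yields effective $p_{g}$-type lower bounds on each sectional geometric genus $g_{i}(X,K_X+L)$. Substituting these into the Riemann--Roch polynomial and simplifying is expected to collapse exactly to the stated inequality $h^{0}(m(K_X+L))\ge (m-1)(m-2)(m^{2}+3m+6)/12+1$ for $m\ge 2$.

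I anticipate case (c) to be the main obstacle: when $K_X+L$ is only barely big, the sub-leading coefficients of $\chi(m(K_X+L))$, which involve mixed intersections such as $K_X\cdot L^{3}$ and $K_X^{2}\cdot L^{2}$ together with the contributions of $c_{2}(X)$ and $\chi(\mathcal{O}_X)$, can overwhelm the leading term for $m=1,2$. Controlling them uniformly from only the Gorenstein/terminal assumption plus ampleness of $L$ should require the finer $g_{i}$-inequalities and a careful sign analysis, and this is precisely where the jump from $m\ge 1$ (case (a)) up to $m\ge 3$ (case (c)) originates. Once those bounds are in hand, the passage from $\chi>0$ back to $h^{0}>0$ is automatic by the vanishing step of the first paragraph.
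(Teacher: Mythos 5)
Your opening reduction is sound and matches the paper's starting point: since $m(K_X+L)-K_X=(m-1)(K_X+L)+mL-L+L$ is ample for $m\ge 1$, Kawamata--Viehweg vanishing on the Gorenstein terminal $4$-fold gives $h^{0}(m(K_X+L))=\chi(m(K_X+L))$, and the case division by $\kappa(K_X+L)$ via the fibration $f:X\to Y$ with $K_X+L=f^{*}H$ is also the paper's framework. But beyond that the proposal is an outline whose "should'' and "expected to'' sit exactly where the actual work is, and in cases (b) and (c) the route you sketch would not close. A direct estimation of the Riemann--Roch polynomial $\chi(m(K_X+L))$ cannot succeed from the stated hypotheses alone: its lower-order coefficients involve $c_{2}(X)\cdot(K_X+L)^{2}$, $K_X\cdot c_{2}(X)\cdot(K_X+L)$ and $\chi(\mathcal O_X)$, none of which is bounded below by "Gorenstein/terminal plus $L$ ample.'' The paper's case (c) instead runs a contradiction scheme: assuming several consecutive $h^{0}(k(K_X+L))$ vanish, the difference formula of Theorem \ref{I1} converts the resulting sign pattern into inequalities among second sectional geometric genera, Proposition \ref{B18} reduces these to inequalities among \emph{first} sectional genera, and those are explicit intersection numbers of the form $-(K_X+L)^{3}(aK_X+bL)$ with the wrong sign --- this handles $m\ge 4$. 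For $m=3$ the essential extra input is H\"oring's generic-nefness theorem for the $\mathbb Q$-twisted sheaf $\Omega_{X}\langle \frac34 L\rangle$ (resp. $\langle\frac23 L\rangle$), which supplies the lower bound on $c_{2}(X)$ that your sketch silently presupposes, together with a separate treatment of the rationally connected case via $g_{2}(X,K_X+L,K_X+L)=h^{0}(3K_X+2L)-2h^{0}(2K_X+L)$.

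Case (b) is likewise not a matter of "$L$-positivity along the base forcing $\chi>0$'': the paper must split on $h^{1}(\mathcal O_X)$, use the Albanese map of $Y$ in the irregular case, and in the regular case analyze the cubic polynomial $\chi(tH)$ on the $3$-dimensional base, where the decisive inequality $a\ge -1/2$ comes from pseudo-effectivity of $K_{X'/T}+L'$ and a lower bound for $g_{2}(T,\delta^{*}H)$ via the first reduction of $(T,\delta^{*}H)$. In case (d) your proposed mechanism (propagating $\kappa(X)\ge 0$ down a tower of $L$-sections) is not what the paper does and would not obviously yield the stated polynomial; the paper instead proves $h^{0}(2(K_X+L))\ge 1$ using a $c_{2}$-bound valid under $\kappa(X)\ge 0$, and then telescopes the second difference $F(t)-F(t-1)\ge(t-1)^{2}$ of $F(t)=h^{0}(t(K_X+L))-h^{0}((t-1)(K_X+L))$, using $g_{2}(X_1,r^{*}(K_X+L),r^{*}(K_X+L))\ge h^{1}(\mathcal O_{X_1})$ and the parity fact $(K_X+L)L^{3}\ge 2$. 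In short: the skeleton is right, but the two load-bearing ideas --- the generic-nefness bound on $c_{2}(X)$ and the consecutive-differences contradiction via the additivity of sectional geometric genera --- are missing, and without them the gap between $\chi$ being a polynomial and $\chi$ being positive for the claimed ranges of $m$ is not bridged.
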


Here we note that recently Arakawa \cite[Theorem 1.5]{Arakawa10} proved that $h^{0}(m(K_{X}+L))>0$ for any polarized $n$-folds $(X,L)$ such that $K_{X}+L$ is nef, and for every integer $m$ with $m\geq (n(n+1)/2)+2$.
\\
\par
In this paper, we shall study mainly a smooth projective variety $X$ 
over the field of complex numbers $\mathbb{C}$.
We will employ the customary notation in algebraic geometry.

\section{Preliminaries}

\begin{Definition}\label{DF3}
(1) Let $X$ (resp. $Y$) be an $n$-dimensional projective manifold, and let $\mathcal{L}$ (resp. $\mathcal{A}$) be an ample line bundle on $X$ (resp. $Y$).
Then $(X,\mathcal{L})$ is called a  {\it simple blowing up of $(Y,\mathcal{A})$} if there exists a birational morphism $\pi: X\to Y$ such that $\pi$ is a blowing up at a point of $Y$ and $\mathcal{L}=\pi^{*}(\mathcal{A})-E$, where $E$ is the exceptional divisor.
\\
(2) Let $X$ (resp. $M$) be an $n$-dimensional projective manifold, and let $\mathcal{L}$ (resp. $\mathcal{A}$) be an ample line bundle on $X$ (resp. $M$).
Then we say that $(M,\mathcal{A})$ is a {\it reduction of $(X,\mathcal{L})$} if $(X,\mathcal{L})$ is obtained by a composite of simple blowing ups of $(M,\mathcal{A})$, and $(M,\mathcal{A})$ is not obtained by a simple blowing up of any polarized manifold.
The morphism $\mu:X\to M$ is called the {\it reduction map}.
\end{Definition}

\begin{Definition}\label{DF4}
Let $(X,\mathcal{L})$ be a polarized manifold of dimension $n$.
We say that $(X,\mathcal{L})$ is a {\it scroll} (resp. {\it quadric fibration}, {\it Del Pezzo fibration}) {\it over a normal projective variety $Y$ with $\dim Y=m$} if there exists a surjective morphism with connected fibers $f:X\to Y$ such that $K_{X}+(n-m+1)\mathcal{L}=f^{*}\mathcal{A}$ (resp. $K_{X}+(n-m)\mathcal{L}=f^{*}\mathcal{A}$, $K_{X}+(n-m-1)\mathcal{L}=f^{*}\mathcal{A}$) for some ample line bundle $\mathcal{A}$ on $Y$.
\end{Definition}

\begin{Remark}\label{RM3}
If $(X,\mathcal{L})$ is a scroll over a smooth curve $C$ 
(resp. a smooth projective surface $S$) with $\dim X=n\geq 3$,
then by \cite[(3.2.1) Theorem]{BeSoWi} and \cite[Proposition 3.2.1]{BeSo-Book}
there exists an ample vector bundle $\mathcal{E}$ of rank $n$ (resp. $n-1$)
on $C$ (resp. $S$) such that 
$(X,\mathcal{L})\cong (\mathbb{P}_{C}(\mathcal{E}),H(\mathcal{E}))$ 
(resp. $(\mathbb{P}_{S}(\mathcal{E}),H(\mathcal{E}))$).
\end{Remark}

Here we give the definition of the $i$th sectional geometric genus of multi-prepolarized varieties.

\begin{Notation}\label{N0}
Let $X$ be a projective variety of dimension $n$, let $i$ be an integer with $0\leq i\leq n-1$, let $L_{1},\dots , L_{n-i}$ be line bundles on $X$ and let $\mathcal{F}$ be a coherent sheaf on $X$.
Then $\chi(L_{1}^{t_{1}}\otimes\cdots\otimes L_{n-i}^{t_{n-i}}\otimes \mathcal{F})$ is a polynomial in $t_{1}, \dots ,t_{n-i}$ of total degree at most $n$.
So we can write $\chi(L_{1}^{t_{1}}\otimes\cdots
\otimes L_{n-i}^{t_{n-i}}\otimes \mathcal{F})$ uniquely as follows.
\begin{eqnarray*}
&&\chi(L_{1}^{t_{1}}\otimes\cdots\otimes L_{n-i}^{t_{n-i}}\otimes \mathcal{F}) \\
&&=\sum_{p=0}^{n}\sum
_{\stackrel{p_{1}\geq 0,\dots , p_{n-i}\geq 0}
{p_{1}+\cdots +p_{n-i}=p}}
\chi_{p_{1},\dots , p_{n-i}}(L_{1},\dots ,L_{n-i};\mathcal{F})
{t_{1}+p_{1}-1\choose p_{1}}\dots{t_{n-i}+p_{n-i}-1\choose p_{n-i}}.
\end{eqnarray*}
\end{Notation}

\begin{Definition}\label{B2}
Let $X$ be a projective variety of dimension $n$, let $i$ be an integer with $0\leq i\leq n-1$, let $L_{1},\dots , L_{n-i}$ be line bundles on $X$ and let $\mathcal{F}$ be a coherent sheaf on $X$.
\\
(1) For every $i\in \mathbb{Z}$ with $0\leq i\leq n$, we set

\[
\chi_{i}^{H}(X,L_{1},\dots , L_{n-i};\mathcal{F}):=
\left\{
\begin{array}{ll}
\chi_{\underbrace{1, \dots , 1}_{n-i}}(L_{1},\dots , L_{n-i};\mathcal{F}) 
& \ \ \mbox{if $0\leq i\leq n-1$,} \\
\chi(\mathcal{F}) & \ \ \mbox{if $i=n$.}
\end{array}\right. \] 
\\
(2)  For every $i\in \mathbb{Z}$ with $0\leq i\leq n$,
the {\it $i$th sectional geometric genus $g_{i}(X,L_{1},\dots , L_{n-i};\mathcal{F})$} is defined by the following:
\begin{eqnarray*}
g_{i}(X,L_{1},\dots , L_{n-i};\mathcal{F})
&=&(-1)^{i}(\chi_{i}^{H}(X,L_{1},\dots , L_{n-i})-\chi(\mathcal{F})) \\
&&\ \ \ +\sum_{j=0}^{n-i}(-1)^{n-i-j}h^{n-j}(\mathcal{F}).
\end{eqnarray*}
\end{Definition}

\begin{Definition}\label{B2-2}
Let $X$ be a projective variety of dimension $n$, let $i$ be an integer with $0\leq i\leq n-1$, and let $L_{1},\dots , L_{n-i}$ be line bundles on $X$.
Then we set
\begin{eqnarray*}
\chi_{i}^{H}(X,L_{1},\dots , L_{n-i})
&:=&\chi_{i}^{H}(X,L_{1},\dots , L_{n-i};{\mathcal O}_{X}),\\
g_{i}(X,L_{1},\dots , L_{n-i})
&:=&g_{i}(X,L_{1},\dots , L_{n-i};{\mathcal O}_{X}), \\
p_{a}^{i}(X,L_{1},\dots , L_{n-i})
&:=&p_{a}^{i}(X,L_{1},\dots , L_{n-i};{\mathcal O}_{X}).
\end{eqnarray*}
\end{Definition}

\begin{Remark}\label{R4}
\begin{itemize}
\item [(1)]
We can prove that $\chi_{p_{1},\cdots, p_{n-i}}(L_{1},\dots, L_{n-i};\mathcal{F})$ is an integer for every non-negative integers $p_{1},\dots , p_{n-i}$ with $0\leq p_{1}+\cdots +p_{n-i}\leq n$.
So in particular we see that $g_{i}(X,L_{1},\dots , L_{n-i};\mathcal{F})$ is an integer.
\item [(2)]
If $i=0$, then
$g_{0}(X,L_{1},\dots , L_{n})=L_{1}\cdots L_{n}$.
\item [(3)]
If $L_{1}=\cdots =L_{n-i}=L$, then $g_{i}(X,\underbrace{L, \dots , L}_{n-i})=g_{i}(X,L)$.
(Here $g_{i}(X,L)$ denotes the $i$th sectional geometric genus of $(X,L)$ (see \cite[Definition 2.1]{Fukuma04}).)
In particular, 
if $i=n-1$, then $g_{n-1}(X,L_{1})$ in Definition \ref{B2-2} is equal to the $(n-1)$th sectional geometric genus of $(X,L_{1})$ in \cite[Definition 2.1]{Fukuma04}.

\item [(4)]
If $i=n$, then $g_{n}(X)=h^{n}({\mathcal O}_{X})$.
\item [(5)]
$\chi_{i}^{H}(X,L_{1},\dots , L_{n-i};\mathcal{F})$ in Definition \ref{B2} (1) is called the {\it $i$th sectional $H$-arithmetic genus of $(X,L_{1},\dots, L_{n-i};\mathcal{F})$.}
\item [(6)]
Let $X$ be a {\it smooth} projective variety of dimension $n$ and let $\mathcal{E}$ be an ample vector bundle of rank $r$ on $X$ with $1\leq r\leq n$.
Then in \cite[Definition 2.1]{Fukuma03}, we defined the 
{\it $i$th $c_{r}$-sectional geometric genus $g_{i}(X,\mathcal{E})$ of $(X,\mathcal{E})$} for every integer $i$ with $0\leq i\leq n-r$.
Let $i$ be an integer with $0\leq i\leq n-1$ and let $L_{1}, \dots , L_{n-i}$ be ample line bundles on $X$.
By setting $\mathcal{E}:=L_{1}\oplus \cdots \oplus L_{n-i}$,
we see that $g_{i}(X,\mathcal{E})=g_{i}(X,L_{1},\dots , L_{n-i})$.
\end{itemize}
\end{Remark}

\begin{Proposition}\label{B18}
Let $X$ be a projective variety of dimension $n$ and 
let $i$ be an integer with $0\leq i\leq n-1$.
Let $A, B, L_{1}, \cdots , L_{n-i-1}$ be line bundles on $X$.
Then 
\begin{eqnarray*}
&&g_{i}(X,A+B,L_{1},\cdots , L_{n-i-1}) \\
&&=g_{i}(X,A,L_{1},\cdots , L_{n-i-1})+g_{i}(X,B,L_{1},\cdots , L_{n-i-1}) \\
&&\ \ \ +g_{i-1}(X,A,B,L_{1},\cdots , L_{n-i-1})-h^{i-1}(\mathcal{O}_{X}). 
\end{eqnarray*}
\end{Proposition}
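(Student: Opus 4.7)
The plan is to reduce the claimed formula to the analogous identity for the sectional $H$-arithmetic genus, and then to verify that identity by a difference-operator computation. Writing out Definition~\ref{B2}~(2) with $\mathcal{F} = \mathcal{O}_X$, one has $g_i(X, M_1, \ldots, M_{n-i}) = (-1)^i \chi_i^H(X, M_1, \ldots, M_{n-i}) + E(i)$, where
$$E(i) := (-1)^{i+1}\chi(\mathcal{O}_X) + \sum_{j=0}^{n-i}(-1)^{n-i-j}h^{n-j}(\mathcal{O}_X)$$
depends only on $i$ and the cohomology of $\mathcal{O}_X$. An elementary telescoping gives $E(i) + E(i-1) = h^{i-1}(\mathcal{O}_X)$, so after dividing through by $(-1)^i$ the proposition becomes equivalent to
$$\chi_i^H(X, A+B, L_1, \ldots, L_{n-i-1}) + \chi_{i-1}^H(X, A, B, L_1, \ldots, L_{n-i-1}) = \chi_i^H(X, A, L_1, \ldots, L_{n-i-1}) + \chi_i^H(X, B, L_1, \ldots, L_{n-i-1}).$$

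To establish this, I would introduce the Snapper-type polynomial
$$Q(s, t, t_1, \ldots, t_{n-i-1}) := \chi(A^s \otimes B^t \otimes L_1^{t_1} \otimes \cdots \otimes L_{n-i-1}^{t_{n-i-1}}),$$
and let $R(s, t)$ denote the result of applying the backward difference operator $\Delta_{t_j} f := f - f|_{t_j \mapsto t_j - 1}$ for each $j = 1, \ldots, n-i-1$ and then specializing $t_1 = \cdots = t_{n-i-1} = 0$. Using Pascal's rule $\binom{v+p-1}{p} - \binom{v+p-2}{p} = \binom{v+p-2}{p-1}$, one verifies that for any fixed $(s, t)$ the iterated operator extracts exactly the $(1, \ldots, 1)$-coefficient with respect to $(t_1, \ldots, t_{n-i-1})$ in the expansion of Notation~\ref{N0}. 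Combined with the observation $\chi((A \otimes B)^v \otimes L_1^{t_1} \otimes \cdots) = Q(v, v, t_1, \ldots, t_{n-i-1})$, one further applies $\Delta_s$ and/or $\Delta_t$ to obtain
\begin{eqnarray*}
\chi_i^H(X, A, L_1, \ldots, L_{n-i-1}) &=& R(0, 0) - R(-1, 0), \\
\chi_i^H(X, B, L_1, \ldots, L_{n-i-1}) &=& R(0, 0) - R(0, -1), \\
\chi_i^H(X, A+B, L_1, \ldots, L_{n-i-1}) &=& R(0, 0) - R(-1, -1), \\
\chi_{i-1}^H(X, A, B, L_1, \ldots, L_{n-i-1}) &=& R(0, 0) - R(-1, 0) - R(0, -1) + R(-1, -1),
\end{eqnarray*}
whence the desired identity follows by direct cancellation.

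The main obstacle is the combinatorial verification that the iterated difference operator in $t_1, \ldots, t_{n-i-1}$ (evaluated at $0$) recovers the $(1, \ldots, 1)$-coefficient of Notation~\ref{N0}; this is a routine induction via Pascal's rule, after which the entire argument reduces to inclusion--exclusion on the four points $(s, t) \in \{0, -1\}^2$ and the telescoping identity for $E(i) + E(i-1)$.
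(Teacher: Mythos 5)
Your proof is correct. Be aware that the paper itself gives no argument for Proposition \ref{B18}: it only cites \cite[Corollary 2.4 and Remark 2.6]{Fukuma08}, so your write-up is a genuine self-contained proof rather than a variant of an in-paper one. Both pillars of your argument check out. First, writing $g_{i}(X,M_{1},\dots,M_{n-i})=(-1)^{i}\chi_{i}^{H}(X,M_{1},\dots,M_{n-i})+E(i)$, the identity $E(i)+E(i-1)=h^{i-1}(\mathcal{O}_{X})$ holds because the two $\chi(\mathcal{O}_{X})$-terms cancel and the two alternating sums telescope, leaving only the $j=n-i+1$ term of the second sum; feeding this into the asserted formula reduces it exactly to $\chi_{i}^{H}(X,A+B,L_{\bullet})+\chi_{i-1}^{H}(X,A,B,L_{\bullet})=\chi_{i}^{H}(X,A,L_{\bullet})+\chi_{i}^{H}(X,B,L_{\bullet})$. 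Second, since $\Delta_{t}{t+p-1\choose p}\big|_{t=0}=\delta_{p,1}$ for $p\geq 1$ and the operator annihilates the constant $p=0$ term, the iterated operator defining $R(s,t)$ does extract the $(1,\dots,1)$-coefficient in the $L$-variables while preserving the binomial expansion in $(s,t)$; moreover setting $t=0$ (resp.\ $s=0$) in $R$ kills precisely the terms with positive $B$-index (resp.\ $A$-index), so your four expressions for the $\chi^{H}$'s in terms of $R$ at the corners of $\{0,-1\}^{2}$ are all consistent with Notation \ref{N0} applied to each tuple of bundles, and both sides of the target identity collapse to $2R(0,0)-R(-1,0)-R(0,-1)$. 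The only loose end is the boundary case $i=0$, where $g_{-1}$ and $h^{-1}(\mathcal{O}_{X})$ fall outside the range of Definition \ref{B2}; that is an issue inherited from the statement rather than from your proof (the relevant coefficient has total degree $n+1$ and vanishes, so the formula degenerates harmlessly), and the paper only ever applies the proposition with $i=1,2$.
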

\begin{proof}
See \cite[Corollary 2.4 and Remark 2.6]{Fukuma08}.
\end{proof}

In \cite[Theorem 5.1]{Fukuma08-3} we obtained the following equality under the assumption that $X$ is smooth.
But by the same argument as in the proof of \cite{Fukuma08-3}, we can also prove this equality
if $X$ is a normal Gorenstein projective variety with $\dim X=n\geq 2$ such that $X$ has at most terminal singularities
because the Serre duality and the Kawamata-Viehweg vanishing theorem hold in this case.

\begin{Theorem}\label{I1}
Let $X$ be a normal Gorenstein projective variety with $\dim X=n\geq 2$ such that $X$ has at most terminal singularities,
let $L_{1},\cdots ,L_{m}$ be nef and big line bundles on $X$ and let $L$ be a nef line bundle, where $m\geq 1$.
Then
\begin{eqnarray*}
&&h^{0}(K_{X}+L_{1}+\cdots +L_{m}+L)-h^{0}(K_{X}+L_{1}+\cdots +L_{m})\\
&&=\sum_{s=0}^{n-1}\sum_{(k_{1},\cdots,k_{n-s-1})\in A_{n-s-1}^{m}}
g_{s}(X, L_{k_{1}},\cdots, L_{k_{n-s-1}},L) \\
&&\ \ \ -\sum_{s=0}^{n-2}{m-1\choose n-s-2}h^{s}(\mathcal{O}_{X}).
\end{eqnarray*}
Here $A_{t}^{p}:=\left\{ (k_{1},\cdots , k_{t})\ |\ k_{l}\in \{ 1, \cdots , p\}, k_{i}<k_{j} \ \mbox{\rm if $i<j$}\right\}$, and we set
\[
\sum_{(k_{1},\cdots,k_{n-s-1})\in A_{n-s-1}^{m}}
g_{s}(X, L_{k_{1}},\cdots, L_{k_{n-s-1}},L)
=\left\{
\begin{array}{lc}
0 & \mbox{if $n-s-1>m$,} \\
g_{n-1}(X,L) & \mbox{if $s=n-1$.}
\end{array} \right. \]
\end{Theorem}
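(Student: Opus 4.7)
The plan is to follow the argument of the smooth case in \cite{Fukuma08-3}, whose key ingredients are Kawamata-Viehweg vanishing and Serre duality. So my first task is to verify that both tools remain available under our hypotheses. Since terminal singularities are rational, $X$ is Cohen-Macaulay; being also Gorenstein, its dualizing sheaf is the line bundle $\omega_{X}=\mathcal{O}_{X}(K_{X})$, so Serre duality $H^{j}(X,D)\cong H^{n-j}(X,K_{X}-D)^{\vee}$ holds in the usual form for any line bundle $D$. Terminal singularities are log-terminal, so the Kawamata-Viehweg vanishing theorem applies: for any nef and big Cartier divisor $D$ one has $H^{j}(X,K_{X}+D)=0$ for every $j\geq 1$.

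With these in hand, I would apply Kawamata-Viehweg to both $K_{X}+L_{1}+\cdots +L_{m}+L$ and $K_{X}+L_{1}+\cdots +L_{m}$, which is allowed because $L_{1}+\cdots +L_{m}$ (resp.\ $L_{1}+\cdots +L_{m}+L$) is a sum of nef-and-big line bundles with a nef one, hence is itself nef and big. Thus $h^{0}$ equals $\chi$ on each side, and the identity to prove reduces to the purely numerical Euler-characteristic identity
$$\chi(K_{X}+L_{1}+\cdots+L_{m}+L)-\chi(K_{X}+L_{1}+\cdots+L_{m})=\sum_{s=0}^{n-1}\sum_{(k_{1},\dots,k_{n-s-1})\in A_{n-s-1}^{m}}g_{s}(X,L_{k_{1}},\dots,L_{k_{n-s-1}},L)-\sum_{s=0}^{n-2}{m-1\choose n-s-2}h^{s}(\mathcal{O}_{X}),$$
whose two sides are polynomial expressions in the intersection numbers of the $L_{i}$, $L$, and the Hodge numbers $h^{s}(\mathcal{O}_{X})$, as prescribed by Notation \ref{N0} and Definition \ref{B2}.

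The cleanest way to finish is induction on $m$. The base case $m=1$ drops out of the very definitions of $g_{n-1}$ and $g_{n-2}$ combined with Serre duality. For the inductive step I would apply Proposition \ref{B18} with $A=L_{1}+\cdots+L_{m-1}$ and $B=L_{m}$, so that every $g_{s}$ involving the full sum $L_{1}+\cdots+L_{m}$ is rewritten as a sum of $g_{s}$'s over proper subsets of indices, plus a correction $-h^{s-1}(\mathcal{O}_{X})$. These corrections then combine across $s$ via Pascal's identity ${m-1\choose k}={m-2\choose k-1}+{m-2\choose k}$, producing precisely the term $\sum_{s}{m-1\choose n-s-2}h^{s}(\mathcal{O}_{X})$ on the right-hand side.

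The main obstacle is the combinatorial bookkeeping: keeping track of which subsets of $\{L_{1},\dots,L_{m}\}$ appear as arguments of each $g_{s}$ after repeated applications of Proposition \ref{B18}, and verifying that the accumulated $h^{s}(\mathcal{O}_{X})$-correction terms aggregate into the stated binomial-coefficient sum. This is a purely formal calculation, already carried out in \cite{Fukuma08-3} for the smooth case, and it transfers verbatim to the present Gorenstein/terminal setting thanks to the duality and vanishing tools verified above.
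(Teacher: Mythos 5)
Your proposal is correct and matches the paper's own treatment: the paper proves Theorem \ref{I1} precisely by observing that the smooth-case argument of \cite[Theorem 5.1]{Fukuma08-3} transfers verbatim because Serre duality and the Kawamata--Viehweg vanishing theorem still hold for a normal Gorenstein projective variety with at most terminal (hence rational, hence Cohen--Macaulay, and klt) singularities, which is exactly the pair of facts you verify. Your additional sketch of the underlying combinatorics (reduction of $h^{0}$ to $\chi$, induction on $m$ via Proposition \ref{B18} and Pascal's identity) is consistent with how that cited argument goes.
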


\begin{Definition}
Let $(\sharp)$ be an assumption of polarized varieties $(X,L)$.
For any fixed positive integer $n$, we set
\begin{eqnarray*}
\mathcal{P}_{n}(\sharp)
&:=&\left \{\ \mbox{\rm $(X,L)$ : polarized variety}\ |\  \mbox{\rm $\dim X=n$, $(X,L)$ satisfies $(\sharp)$}\right.\\
&&\ \ \ \ \ \ \ \ \ \ \ \ \ \ \ \ \ \ \ \ \ \ \ \ \ \ \ \ \ \ \ \ \ \ \ \ \ \ 
\left. \mbox{ and $\kappa(K_{X}+L)\geq 0$}\right\}, \\
\mathcal{P}_{n}^{{\rm NEF}}(\sharp)
&:=&\left \{\ \mbox{\rm $(X,L)$ : polarized variety}\ |\  \mbox{\rm $\dim X=n$, $(X,L)$ satisfies $(\sharp)$}\right.\\
&&\ \ \ \ \ \ \ \ \ \ \ \ \ \ \ \ \ \ \ \ \ \ \ \ \ \ \ \ \ \ \ \ \ \ \ \ \ \ 
\left. \mbox{ and $K_{X}+L$ is nef}\right\}, \\
&& \\
\mathcal{M}_{n}(\sharp)
&:=&\left\{\ r\in\mathbb{N}\ |\ h^{0}(r(K_{X}+L))>0\  \mbox{\rm for any $(X,L)\in{\mathcal{P}}_{n}(\sharp)$}\right\},\\
\mathcal{M}_{n}(\sharp)^{+}
&:=&\left\{\ r\in\mathbb{N}\ |\ h^{0}(m(K_{X}+L))>0\  \mbox{\rm for any $m\geq r$ and any $(X,L)\in{\mathcal{P}}_{n}(\sharp)$}\right\},\\
\mathcal{M}_{n}^{{\rm NEF}}(\sharp)
&:=&\left\{\ r\in\mathbb{N}\ |\ h^{0}(r(K_{X}+L))>0\  \mbox{\rm for any $(X,L)\in{\mathcal{P}}_{n}^{{\rm NEF}}(\sharp)$}\right\},\\
\mathcal{M}_{n}^{{\rm NEF}}(\sharp)^{+}
&:=&\left\{\ r\in\mathbb{N}\ |\ h^{0}(m(K_{X}+L))>0\  \mbox{\rm for any $m\geq r$ and any $(X,L)\in{\mathcal{P}}_{n}^{{\rm NEF}}(\sharp)$}\right\},\\
&& \\
m_{n}(\sharp)&:=&
\left\{
\begin{array}{ll}
\mbox{\rm {min}}\ \mathcal{M}_{n}(\sharp)
& \ \ \mbox{\rm if $\mathcal{M}_{n}(\sharp)\neq \emptyset$,} \\
\infty & \ \ \mbox{\rm if $\mathcal{M}_{n}(\sharp)=\emptyset$.}
\end{array}\right. \\
m_{n}(\sharp)^{+}&:=&
\left\{
\begin{array}{ll}
\mbox{\rm {min}}\ \mathcal{M}_{n}(\sharp)^{+}
& \ \ \mbox{\rm if $\mathcal{M}_{n}(\sharp)^{+}\neq \emptyset$,} \\
\infty & \ \ \mbox{\rm if $\mathcal{M}_{n}(\sharp)^{+}=\emptyset$.}
\end{array}\right. \\
m_{n}^{{\rm NEF}}(\sharp)&:=&
\left\{
\begin{array}{ll}
\mbox{\rm {min}}\ \mathcal{M}_{n}^{{\rm NEF}}(\sharp)
& \ \ \mbox{\rm if $\mathcal{M}_{n}^{{\rm NEF}}(\sharp)\neq \emptyset$,} \\
\infty & \ \ \mbox{\rm if $\mathcal{M}_{n}^{{\rm NEF}}(\sharp)=\emptyset$.}
\end{array}\right. \\
m_{n}^{{\rm NEF}}(\sharp)^{+}&:=&
\left\{
\begin{array}{ll}
\mbox{\rm {min}}\ \mathcal{M}_{n}^{{\rm NEF}}(\sharp)^{+}
& \ \ \mbox{\rm if $\mathcal{M}_{n}^{{\rm NEF}}(\sharp)^{+}\neq \emptyset$,} \\
\infty & \ \ \mbox{\rm if $\mathcal{M}_{n}^{{\rm NEF}}(\sharp)^{+}=\emptyset$.}
\end{array}\right. 
\end{eqnarray*}
\end{Definition}

\begin{Remark}\label{TREM-1}
Here we note that the following inequality hold.
\begin{eqnarray*}
m_{n}(\sharp)&\leq& m_{n}(\sharp)^{+},\\
m_{n}^{{\rm NEF}}(\sharp)&\leq& m_{n}^{{\rm NEF}}(\sharp)^{+},\\
m_{n}^{{\rm NEF}}(\sharp)&\leq& m_{n}(\sharp),\\
m_{n}^{{\rm NEF}}(\sharp)^{+}&\leq& m_{n}(\sharp)^{+}.
\end{eqnarray*}
\end{Remark}

\begin{Lemma}\label{Lemma B}
Let $X$ be a complete normal variety of dimension $n$, 
and let $D_{1}$ and $D_{2}$ be effective Cartier divisors on $X$.
Then $h^{0}(D_{1}+D_{2})\geq h^{0}(D_{1})+h^{0}(D_{2})-1$.
\end{Lemma}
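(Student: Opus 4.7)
The plan is to exhibit an irreducible subvariety of $|D_1+D_2|$ of the expected dimension $h^0(D_1)+h^0(D_2)-2$ by using the natural addition-of-divisors map on complete linear systems. Since each $D_i$ is an effective Cartier divisor, it carries a canonical nonzero section $s_i$ with $\mathrm{div}(s_i)=D_i$, so in particular $m_i:=h^0(D_i)\geq 1$ and we may work with nonempty $|D_i|$.

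First I would consider the bilinear multiplication of sections
$$\mu : H^0(\mathcal{O}_X(D_1))\otimes H^0(\mathcal{O}_X(D_2))\longrightarrow H^0(\mathcal{O}_X(D_1+D_2)),\quad a\otimes b\longmapsto a\cdot b.$$
Because $X$ is a variety, hence integral, the product $a\cdot b$ is nonzero whenever $a$ and $b$ are both nonzero. This allows $\mu$ to descend to a morphism of projective varieties
$$\phi : |D_1|\times |D_2|\longrightarrow |D_1+D_2|,\quad (D'_1,D'_2)\longmapsto D'_1+D'_2,$$
where we identify $|D_i|$ with $\mathbb{P}(H^0(\mathcal{O}_X(D_i)))$. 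The source is irreducible of dimension $(m_1-1)+(m_2-1)=m_1+m_2-2$, and so is its image.

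The crux is to verify that the fibers of $\phi$ are finite. Given $E\in |D_1+D_2|$, a point of the fiber $\phi^{-1}(E)$ corresponds to a pair $(D'_1,D'_2)$ of effective Cartier divisors satisfying $D'_1+D'_2=E$; in particular $D'_1$ is an effective Cartier sub-divisor of $E$. Since $X$ is normal, $E$ has a prime Weil decomposition $E=\sum_i n_iF_i$ with only finitely many components, and any effective sub-divisor is obtained by replacing each $n_i$ by an integer $m_i$ with $0\leq m_i\leq n_i$. Hence $\phi^{-1}(E)$ consists of only finitely many points.

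Finiteness of the generic fiber forces $\dim\phi(|D_1|\times|D_2|)=m_1+m_2-2$, and since $\phi(|D_1|\times|D_2|)\subseteq |D_1+D_2|$ this immediately yields $\dim|D_1+D_2|\geq m_1+m_2-2$, i.e., $h^0(D_1+D_2)\geq h^0(D_1)+h^0(D_2)-1$. The main technical points I expect to verify with care are the well-definedness of $\phi$ as a morphism of projective varieties (which rests on the absence of zero divisors among sections, i.e., the integrality of $X$) and the finiteness of its fibers (which rests on the finite prime decomposition of an effective Cartier divisor on a normal variety); a naive linear-algebra argument via the subspaces $s_2\cdot H^0(D_1)$ and $s_1\cdot H^0(D_2)$ of $H^0(D_1+D_2)$ is insufficient, for example when $D_1=D_2$, which is precisely why the projective-dimension argument is needed.
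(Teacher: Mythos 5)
Your argument is correct and is essentially the standard proof of this statement: the paper itself gives no argument but only cites \cite[Lemma 1.12]{Fukuma04} and \cite[15.6.2 Lemma]{Kollar95}, and the proof found there is exactly your addition-of-divisors morphism $|D_{1}|\times|D_{2}|\to|D_{1}+D_{2}|$, whose fibers are finite because an effective divisor admits only finitely many decompositions into a sum of two effective divisors. Nothing further is needed.
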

\begin{proof}
See \cite[Lemma 1.12]{Fukuma04} or \cite[15.6.2 Lemma]{Kollar95}.
\end{proof}

\begin{Lemma}\label{Lemma C}
Let $p$ and $q$ be positive integers such that $p$ and $q$ are coprime.
Then for any integer $l$ with $l\geq (p-1)(q-1)$, there exist
non-negative integers $i$ and $j$ such that $l=pi+qj$.
\end{Lemma}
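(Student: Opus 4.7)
The plan is to argue by residues modulo $p$. Since $\gcd(p,q)=1$, the multiples $0, q, 2q, \dots, (p-1)q$ represent each residue class modulo $p$ exactly once. Consequently, for any integer $l$ there is a unique $j\in\{0,1,\dots,p-1\}$ with $jq\equiv l\pmod{p}$. With this $j$ fixed, the difference $l-jq$ is an integer divisible by $p$, so I can set $i:=(l-jq)/p$ and attempt to verify that $i\geq 0$ and $j\geq 0$.

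The non-negativity of $j$ is built into the choice, so the entire argument reduces to showing $l-jq\geq 0$, i.e., $l\geq jq$. Here I would use the hypothesis $l\geq (p-1)(q-1)$ together with the upper bound $jq\leq (p-1)q$. Suppose for contradiction that $l<jq$; then $jq-l$ is a \emph{positive} multiple of $p$ and hence satisfies $jq-l\geq p$. On the other hand,
\[
jq-l\leq (p-1)q-(p-1)(q-1)=p-1<p,
\]
which is the desired contradiction. Therefore $l-jq\geq 0$, and $(i,j)$ gives the required representation $l=pi+qj$ with non-negative integers.

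I do not expect any serious obstacle: the only substantive step is the numerical estimate in the final display, and that is a one-line computation once the residue class of $j$ has been pinned down by coprimality. The bound $(p-1)(q-1)$ is exactly what makes the inequality tight, which is reassuring since the lemma is a standard form of the Sylvester--Frobenius result.
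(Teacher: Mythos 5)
Your proof is correct, and it is essentially the same argument as the paper's: both normalize one coefficient of the representation $l=pi+qj$ to a bounded range ($j\in\{0,\dots,p-1\}$ for you, the coefficient of $p$ in $[0,q)$ for the paper) and then rule out negativity of the other coefficient using the hypothesis $l\geq(p-1)(q-1)$. The only cosmetic difference is that you phrase the contradiction via divisibility by $p$, while the paper bounds $l$ from above directly.
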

\begin{proof}
We note that there exists a pair of integers $(\alpha, \beta)$ such that $p\alpha+q\beta=l$.
Then we can easily see that any integers $x$ and $y$ which satisfy $px+qy=l$ can be expressed as $x=\alpha+qm$ and $y=\beta-pm$, where $m$ is an arbitrary integer.
In particular there exists a pair of integers $(x_{1},y_{1})$ with $px_{1}+qy_{1}=l$ and $0\leq x_{1}<q$.
If $y_{1}\geq 0$, then we get the assertion.
So we may assume that $y_{1}<0$.
Then $l=px_{1}+qy_{1}\leq px_{1}-q\leq p(q-1)-q=(p-1)(q-1)-1$.
But this is a contradiction because we assume that $l\geq (p-1)(q-1)$.
\end{proof}

\begin{Lemma}\label{Lemma D}
Let $X$ be a smooth projective variety of dimension $n\geq 4$ and let $V$ be a normal projective variety of dimension $n\geq 4$ with $\dim \mbox{\rm Sing}(V)\leq n-4$.
Let $\pi: X\to V$ be a birational morphism such that $X\backslash \pi^{-1}(\mbox{\rm Sing}(V))\cong V\backslash \mbox{\rm Sing}(V)$.
Let $E$ be a $\pi$-exceptional irreducible and reduced divisor on $X$, $A_{1}$ and $A_{2}$ line bundles on $X$ and $L_{1}, \dots , L_{n-3}$ line bundles on $V$.
Then $EA_{1}A_{2}(\pi^{*}(L_{1}))\cdots (\pi^{*}(L_{n-3}))=0$.
\end{Lemma}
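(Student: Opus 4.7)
The plan is to use the projection formula together with a dimension count on the image of $E$ under $\pi$.

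First I would observe that because $\pi:X\to V$ is an isomorphism over $V\setminus\mbox{Sing}(V)$ and $E$ is $\pi$-exceptional, the image $\pi(E)$ must be contained in $\mbox{Sing}(V)$. By hypothesis this gives
\[
\dim \pi(E) \leq \dim \mbox{Sing}(V) \leq n-4.
\]

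Next I would view $\alpha := E\cdot A_{1}\cdot A_{2}$ as an $(n-3)$-dimensional cycle on $X$ whose support lies inside $E$. Since each of $L_{1},\dots,L_{n-3}$ is a line bundle on $V$ (hence Cartier), the projection formula for the proper birational morphism $\pi$ yields
\[
E\cdot A_{1}\cdot A_{2}\cdot \pi^{*}(L_{1})\cdots \pi^{*}(L_{n-3}) \;=\; \pi_{*}(\alpha)\cdot L_{1}\cdots L_{n-3}.
\]
So it suffices to show that $\pi_{*}(\alpha)=0$ as a cycle on $V$.

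For this, I would decompose $\alpha$ into its irreducible components. Every component $W$ of $\mbox{supp}(\alpha)$ satisfies $W\subseteq E$, hence $\pi(W)\subseteq \pi(E)$, and therefore
\[
\dim \pi(W)\leq n-4 < n-3 = \dim W.
\]
Consequently $\pi|_{W}$ is not generically finite, so by the definition of proper pushforward of cycles we have $\pi_{*}[W]=0$. Summing over the components gives $\pi_{*}(\alpha)=0$, which combined with the projection-formula identity above completes the proof.

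The only genuine technical point is justifying the projection formula in this singular setting, but since $V$ is normal and the $L_{i}$ are Cartier, this is standard (one can pull back the Cartier divisors representing the $L_{i}$ to $X$ and apply the usual Fulton-style projection formula), so I do not expect any real obstacle here.
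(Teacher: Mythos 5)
Your proof is correct, but it takes a genuinely different route from the paper's. Both arguments ultimately rest on the same geometric input, namely that $E$ being $\pi$-exceptional together with $X\backslash \pi^{-1}(\mbox{\rm Sing}(V))\cong V\backslash \mbox{\rm Sing}(V)$ forces $\pi(E)\subseteq \mbox{\rm Sing}(V)$, whence $\dim\pi(E)\leq n-4 < n-3$. You exploit this via Fulton-style intersection theory: you form the cycle class $\alpha=E\cdot A_{1}\cdot A_{2}$ supported on $E$, apply the projection formula for the proper morphism $\pi$ and the Cartier classes $L_{1},\dots,L_{n-3}$ on $V$ (valid without any smoothness hypothesis on $V$), and kill $\pi_{*}(\alpha)$ because every $(n-3)$-dimensional component of a representing cycle maps onto something of dimension at most $n-4$, so its proper pushforward vanishes. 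The paper instead works entirely with the Snapper--Kleiman definition of intersection numbers: it restricts to $E$, writes the Euler characteristic $f(t_{1},\dots,t_{n-1})=\chi(E,\cdots)$ as a polynomial, and shows by specializing $(t_{n-2},t_{n-1})$ to $(0,0)$, $(1,0)$, $(0,1)$, $(1,1)$ that the six relevant coefficients $C_{1},\dots,C_{6}$ all vanish, using the fact that the degree of $\chi$ in the pulled-back variables is bounded by $\dim\pi(E)$. Your approach is shorter and conceptually cleaner but imports the machinery of rational equivalence and the cycle-level projection formula; the paper's approach is more elementary in that it stays within the numerical (Euler-characteristic) framework of Kleiman that it is already citing, at the cost of a longer coefficient-chasing computation. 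I see no gap in your argument; the one technical point you flag --- the validity of the projection formula over the normal variety $V$ --- is indeed standard for proper morphisms and Cartier divisors.
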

\begin{proof}
By \cite[Proposition 4 in section 2, chapter I]{Kleiman66}, we have
$$EA_{1}A_{2}(\pi^{*}(L_{1}))\cdots (\pi^{*}(L_{n-3}))
=(A_{1}|_{E})(A_{2}|_{E})(\pi^{*}(L_{1}))|_{E}\cdots (\pi^{*}(L_{n-3}))|_{E}.$$
On the other hand, since $\dim \mbox{Sing}V\leq n-4$, we have $\dim\pi(E)\leq n-4$.
Here we set $Z:=\pi(E)$.
Then
$$(A_{1}|_{E})(A_{2}|_{E})(\pi^{*}(L_{1}))|_{E}\cdots (\pi^{*}(L_{n-3}))|_{E}
=(A_{1}|_{E})(A_{2}|_{E})((\pi|_{E})^{*}(L_{1}|_{Z}))\cdots ((\pi|_{E})^{*}(L_{n-3}|_{Z})).$$
Here we set 
$$f(t_{1}, \dots , t_{n-1}):=\chi(E, ((\pi|_{E})^{*}(L_{1}|_{Z}))^{\otimes t_{1}}\otimes \cdots \otimes ((\pi|_{E})^{*}(L_{n-3}|_{Z}))^{\otimes t_{n-3}}\otimes (A_{1}|_{E})^{\otimes t_{n-2}}\otimes (A_{2}|_{E})^{\otimes t_{n-1}}).$$
Then $f(t_{1}, \dots , t_{n-1})$ is a polynomial of $t_{1}, \dots , t_{n-1}$ of degree at most $n-1$.
Let $C_{1}$ (resp. $C_{2}$, $C_{3}$, $C_{4}$, $C_{5}$, $C_{6}$) be the coefficient of $t_{1}\cdots t_{n-3}$ (resp. $t_{1}\cdots t_{n-3}t_{n-2}$, $t_{1}\cdots t_{n-3}t_{n-1}$, $t_{1}\cdots t_{n-3}t_{n-2}^{2}$, $t_{1}\cdots t_{n-3}t_{n-1}^{2}$, $t_{1}\cdots t_{n-3}t_{n-2}t_{n-1}$) in $f(t_{1}, \dots , t_{n-1})$.
\\
Then $f(t_{1}, \dots , t_{n-3}, 0, 0)=\chi(E, ((\pi|_{E})^{*}(L_{1}|_{Z}))^{\otimes t_{1}}\otimes \cdots \otimes ((\pi|_{E})^{*}(L_{n-3}|_{Z}))^{\otimes t_{n-2}})$.
Here we set 
$$g(t_{1}, \dots , t_{n-3}):=f(t_{1}, \dots , t_{n-3}, 0, 0).$$
Then the coefficient of $t_{1}\cdots t_{n-3}$ in $g(t_{1}, \dots , t_{n-3})$ 
is equal to $C_{1}$.
On the other hand since the degree of $g(t_{1}, \dots , t_{n-3})$ is less than $n-3$
(see the proof of \cite[Proposition 6 in section 2, chapter I]{Kleiman66}), 
we have $C_{1}=0$.
\par
Next we consider the polynomial $f(t_{1}, \dots , t_{n-3}, 1, 0)$ (resp. $f(t_{1}, \dots , t_{n-3}, 0, 1)$).
Then the coefficient of $t_{1}\cdots t_{n-3}$ in $f(t_{1}, \dots , t_{n-3}, 1, 0)$ (resp. $f(t_{1}, \dots , t_{n-3}, 0, 1)$) is $C_{1}+C_{2}+C_{4}$ (resp. $C_{1}+C_{3}+C_{5}$).
Moreover 
$$f(t_{1}, \dots , t_{n-3}, 1, 0)=\chi(E, ((\pi|_{E})^{*}(L_{1}|_{Z}))^{\otimes t_{1}}\otimes \cdots \otimes ((\pi|_{E})^{*}(L_{n-3}|_{Z}))^{\otimes t_{n-2}}\otimes (A_{1}|_{E}))$$ 
and the degree of this polynomial is less than $n-3$.
Hence $C_{1}+C_{2}+C_{4}=0$.
By the same reason as this, we have $C_{1}+C_{3}+C_{5}=0$.
Therefore $C_{2}+C_{4}=C_{3}+C_{5}=0$ since $C_{1}=0$.
\par
Finally we consider $f(t_{1}, \dots , t_{n-3}, 1, 1)$.
Then the coefficient of $t_{1}\cdots t_{n-3}$ in $f(t_{1}, \dots , t_{n-3}, 1, 1)$ is $C_{1}+C_{2}+C_{3}+C_{4}+C_{5}+C_{6}$.
Moreover 
$$f(t_{1}, \dots , t_{n-3}, 1, 1)=\chi(E, ((\pi|_{E})^{*}(L_{1}|_{Z}))^{\otimes t_{1}}\otimes \cdots \otimes ((\pi|_{E})^{*}(L_{n-3}|_{Z}))^{\otimes t_{n-2}}\otimes (A_{1}|_{E})\otimes (A_{2}|_{E}))$$ 
and the degree of this polynomial is less than $n-3$.
Hence $C_{1}+C_{2}+C_{3}+C_{4}+C_{5}+C_{6}=0$.
Therefore $C_{6}=0$ because $C_{1}=C_{2}+C_{4}=C_{3}+C_{5}=0$.
\par
By above we see that the coefficient of $t_{1}\cdots t_{n-1}$ in $f(t_{1},\dots , t_{n-1})$ is zero.
Therefore by the definition of intersection numbers (see \cite{Kleiman66}) we have
$(A_{1}|_{E})(A_{2}|_{E})(\pi|_{E})^{*}(L_{1}|_{Z})\cdots (\pi|_{E})^{*}(L_{n-3}|_{Z})=0$.
Hence we get the assertion. 
\end{proof}

\begin{Lemma}\label{Lemma E}
Let $X$ be a smooth projective variety of dimension $n\geq 4$ and let $V$ be a normal Gorenstein projective variety of dimension $n\geq 4$ with only terminal singularities and $\dim \mbox{\rm Sing}(V)\leq n-4$.
Let $\pi: X\to V$ be a birational morphism such that $X\backslash \pi^{-1}(\mbox{\rm Sing}(V))\cong V\backslash \mbox{\rm Sing}(V)$.
Let $E_{\pi}$ be the $\pi$-exceptional divisor on $X$ with $K_{X}=\pi^{*}(K_{V})+E_{\pi}$ and $L_{1}, \dots , L_{n-3}$ line bundles on $V$.
Then $c_{2}(X)E_{\pi}(\pi^{*}(L_{1}))\cdots (\pi^{*}(L_{n-3}))=0$.
\end{Lemma}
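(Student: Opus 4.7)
The plan is to reduce, by linearity, to showing that $c_2(X)\cdot E\cdot \pi^*(L_1)\cdots \pi^*(L_{n-3})=0$ for every irreducible reduced $\pi$-exceptional divisor $E$. This reduction uses only the decomposition $E_\pi=\sum_i a_i E_i$ into irreducible components (the $a_i$ are positive since $V$ has terminal singularities, but any integer multiplicities would suffice). For such an $E$, the image $Z:=\pi(E)$ lies in $\mathrm{Sing}(V)$ and hence has dimension at most $n-4$.

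The key step is to apply Hirzebruch--Riemann--Roch on the smooth variety $X$ to the sheaf $\mathcal{O}_E(D)$, where $D:=t_1\pi^*(L_1)+\cdots+t_{n-3}\pi^*(L_{n-3})$ with formal variables $t_1,\dots,t_{n-3}$. The short exact sequence $0\to \mathcal{O}_X(D-E)\to \mathcal{O}_X(D)\to \mathcal{O}_E(D)\to 0$ gives $\mathrm{ch}(\mathcal{O}_E(D))=\exp(D)(1-\exp(-E))$, and hence
$$\chi(X,\mathcal{O}_E(D))=\int_X \exp(D)\bigl(1-\exp(-E)\bigr)\mathrm{td}(X).$$
On the other hand, pushing forward along $\pi|_E:E\to Z$ and using the projection formula and Leray, this Euler characteristic equals $\chi(Z,L_1|_Z^{t_1}\otimes \cdots \otimes L_{n-3}|_Z^{t_{n-3}}\otimes R(\pi|_E)_*\mathcal{O}_E)$, which is a polynomial in $t_1,\dots,t_{n-3}$ of total degree at most $\dim Z\leq n-4<n-3$ (the same polynomial-degree principle used in the proof of Lemma \ref{Lemma D}). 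Therefore the coefficient of the multilinear monomial $t_1\cdots t_{n-3}$ vanishes.

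I would then extract that coefficient from the HRR side. Since the coefficient of $t_1\cdots t_{n-3}$ in $\exp(D)$ is $\pi^*(L_1)\cdots \pi^*(L_{n-3})$, a class of codimension $n-3$, the constraint reduces to
$$\pi^*(L_1)\cdots \pi^*(L_{n-3})\cdot \bigl[(1-\exp(-E))\mathrm{td}(X)\bigr]_3=0.$$
Expanding the codimension-$3$ bracket using $c_1(TX)=-K_X$ and $\mathrm{td}_2(X)=(c_1^2+c_2)/12$, and clearing denominators, this becomes
$$\pi^*(L_1)\cdots \pi^*(L_{n-3})\cdot \bigl(E\cdot c_2(X)+E\cdot K_X^2+3E^2\cdot K_X+2E^3\bigr)=0.$$

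Finally, I would apply Lemma \ref{Lemma D} three times to eliminate the terms not involving $c_2(X)$: take $(A_1,A_2)=(K_X,K_X)$ to kill $E\cdot K_X^2\cdot \pi^*(L_\bullet)$, take $(A_1,A_2)=(E,K_X)$ to kill $E\cdot E\cdot K_X\cdot \pi^*(L_\bullet)$, and take $(A_1,A_2)=(E,E)$ to kill $E\cdot E\cdot E\cdot \pi^*(L_\bullet)$. What remains is $c_2(X)\cdot E\cdot \pi^*(L_1)\cdots \pi^*(L_{n-3})=0$, which by the reduction completes the proof. The one delicate point is the degree bound of the Euler-characteristic polynomial via pushforward to $Z$; the subsequent HRR bookkeeping is arranged precisely so that Lemma \ref{Lemma D} eliminates every remaining term except the desired $c_2$ one.
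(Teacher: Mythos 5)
Your argument is correct, but it is not the route the paper takes, so a comparison is in order. The paper keeps $E_{\pi}$ intact and compares $\chi_{3}^{H}(X,\pi^{*}(L_{1}),\dots,\pi^{*}(L_{n-3});\mathcal{F})$ for $\mathcal{F}=\mathcal{O}(E_{\pi})$ and $\mathcal{F}=\mathcal{O}_{X}$: an HRR-type expansion (\cite[Theorem 2.4]{Fukuma08}) combined with Lemma \ref{Lemma D} shows that the difference of these two quantities equals $\frac{1}{12}c_{2}(X)E_{\pi}\pi^{*}(L_{1})\cdots\pi^{*}(L_{n-3})$, and then Grauert--Riemenschneider vanishing applied via $K_{X}=\pi^{*}(K_{V})+E_{\pi}$ (giving $R^{i}\pi_{*}\mathcal{O}(E_{\pi})=0$), the rationality of terminal singularities, and $\pi_{*}\mathcal{O}(E_{\pi})=\mathcal{O}_{V}$ show that the two Euler characteristics coincide, forcing that difference to vanish. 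You instead decompose $E_{\pi}$ into irreducible exceptional components, restrict to each component $E$ via the sequence $0\to\mathcal{O}_{X}(D-E)\to\mathcal{O}_{X}(D)\to\mathcal{O}_{E}(D)\to 0$, and kill the coefficient of $t_{1}\cdots t_{n-3}$ purely by the dimension count $\dim\pi(E)\leq n-4$ --- the same Snapper-polynomial degree principle that underlies the paper's proof of Lemma \ref{Lemma D}. The Todd-class bookkeeping and the use of Lemma \ref{Lemma D} to strip off the $EK_{X}^{2}$, $E^{2}K_{X}$ and $E^{3}$ terms are common to both proofs, but your version dispenses with the vanishing theorems entirely and, as you note, does not use effectivity or the discrepancy interpretation of $E_{\pi}$ at all: it proves the stronger statement that $c_{2}(X)E\pi^{*}(L_{1})\cdots\pi^{*}(L_{n-3})=0$ for an arbitrary $\pi$-exceptional divisor $E$, whereas the paper's argument is tied to the specific divisor $E_{\pi}$. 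The price is that you must justify the degree bound on $\chi(E,D|_{E})$ by pushing forward to $Z=\pi(E)$ yourself, rather than quoting a cohomology comparison; that step is sound, since $\chi(Z,R^{j}(\pi|_{E})_{*}\mathcal{O}_{E}\otimes L_{1}|_{Z}^{t_{1}}\otimes\cdots\otimes L_{n-3}|_{Z}^{t_{n-3}})$ has total degree at most $\dim Z\leq n-4<n-3$ for each $j$.
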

\begin{proof}
Let $A_{1}, \dots, A_{n-3}$ and $A$ be line bundles on $X$.
Then by \cite[Theorem 2.4]{Fukuma08} we get the following.
\begin{eqnarray}
&&\chi_{3}^{H}(X,A_{1}, \dots , A_{n-3};A) \label{Lemma E-1}\\
&&=\sum_{k=0}^{3}\left(\sum_{(t_{1}, \cdots, t_{n-3})\in S(n-3)^{+}_{n-k}}
\frac{(-1)^{k}}{(t_{1})!\cdots (t_{n-3})!}
A_{1}^{t_{1}}\cdots A_{n-3}^{t_{n-3}}\right)R_{k}(X,A). \nonumber
\end{eqnarray}

Here we note that
\begin{eqnarray*}
R_{0}(X,A)&=&1,\\
R_{1}(X,A)&=&T_{1}(X)+\mbox{\rm ch}(A)_{1}=\frac{1}{2}c_{1}(X)+A,\\
R_{2}(X,A)&=&T_{2}(X)+\mbox{\rm ch}(A)_{1}T_{1}(X)+\mbox{\rm ch}(A)_{2}\\
          &=&\frac{1}{12}(c_{2}(X)+c_{1}(X)^{2})+\frac{1}{2}c_{1}(X)A+\frac{1}{2}A^{2},\\
R_{3}(X,A)&=&T_{3}(X)+\mbox{\rm ch}(A)_{1}T_{2}(X)+\mbox{\rm ch}(A)_{2}T_{1}(X)+\mbox{\rm ch}(A)_{3}\\
          &=&\frac{1}{24}c_{1}(X)c_{2}(X)
          +\frac{1}{12}(c_{2}(X)+c_{1}(X)^{2})A+\frac{1}{4}c_{1}(X)A^{2}+\frac{1}{6}A^{3}.
\end{eqnarray*}

Here we put $A_{i}=\pi^{*}(L_{i})$ for $i=1,\dots , n-3$.
Then by using Lemma \ref{Lemma D} and the equation (\ref{Lemma E-1}) above we have
\begin{eqnarray}
&&\chi_{3}^{H}(X,\pi^{*}(L_{1}), \dots , \pi^{*}(L_{n-3}); \mathcal{O}(E_{\pi}))
-\chi_{3}^{H}(X,\pi^{*}(L_{1}), \dots , \pi^{*}(L_{n-3}); \mathcal{O}_{X}) \label{Lemma E-2}\\
&&=\frac{1}{12}c_{2}(X)E_{\pi}\pi^{*}(L_{1})\cdots\pi^{*}(L_{n-3}). \nonumber 
\end{eqnarray}

By Grauert-Riemenschneider's theorem we see that for every $i\geq 1$
\begin{eqnarray*}
0&=&R^{i}\pi_{*}(\mathcal{O}(K_{X}))\\
&=&R^{i}\pi_{*}(\pi^{*}(K_{V})+E_{\pi})\\
&=&R^{i}\pi_{*}(\mathcal{O}(E_{\pi}))\otimes K_{V}.
\end{eqnarray*}
Hence $R^{i}\pi_{*}(\mathcal{O}(E_{\pi}))=0$ for every $i\geq 1$.
We also note that $R^{i}\pi_{*}(\mathcal{O}_{X})=0$ for every $i\geq 1$ because $V$ has only rational singularities.
So we see that for every integer $i$ with $i\geq 0$ we have 
\begin{eqnarray*}
&&h^{i}(\pi^{*}(L_{1})^{\otimes t_{1}}\otimes \cdots \otimes \pi^{*}(L_{n-3})^{\otimes t_{n-3}}\otimes \mathcal{O}(E_{\pi}))\\
&&=h^{i}((L_{1})^{\otimes t_{1}}\otimes \cdots \otimes (L_{n-3})^{\otimes t_{n-3}}\otimes \pi_{*}(\mathcal{O}(E_{\pi})))\\
&&=h^{i}((L_{1})^{\otimes t_{1}}\otimes \cdots \otimes (L_{n-3})^{\otimes t_{n-3}})\\
&&=h^{i}(\pi^{*}(L_{1})^{\otimes t_{1}}\otimes \cdots \otimes \pi^{*}(L_{n-3})^{\otimes t_{n-3}}).
\end{eqnarray*}

Therefore
\begin{eqnarray*}
&&\chi(\pi^{*}(L_{1})^{\otimes t_{1}}\otimes \cdots \otimes \pi^{*}(L_{n-3})^{\otimes t_{n-3}}\otimes \mathcal{O}(E_{\pi}))\\
&&=\chi(\pi^{*}(L_{1})^{\otimes t_{1}}\otimes \cdots \otimes \pi^{*}(L_{n-3})^{\otimes t_{n-3}}).
\end{eqnarray*}

In particular, we have
\begin{eqnarray}
&&\chi_{3}^{H}(X,\pi^{*}(L_{1}), \dots , \pi^{*}(L_{n-3}); \mathcal{O}(E_{\pi})) \label{Lemma E-3}\\
&&=\chi_{3}^{H}(X,\pi^{*}(L_{1}), \dots , \pi^{*}(L_{n-3}); \mathcal{O}_{X}). \nonumber
\end{eqnarray}

So by (\ref{Lemma E-2}) and (\ref{Lemma E-3}) we get the assertion. 
\end{proof}

\section{The case where \textrm{\boldmath $K_{X}+L$} is nef}

In this section, we assume that $(X,L)$ satisfies the following assumption (SRE).
\\
\\
(SRE): $(X,L)$ is a polarized variety of dimension $n$ such that $X$ is a normal Gorenstein projective variety with only isolated terminal singularities.
\\
\par
Here we note that this condition appears when we take the second reduction of polarized manifolds of even dimension.
\par
First we prove the following proposition.

\begin{Proposition}\label{A-P1}
$\mathcal{M}_{n}^{NEF}(\mbox{\rm SRE})\neq \emptyset$
\end{Proposition}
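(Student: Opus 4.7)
The plan is to deduce the proposition from Arakawa's uniform non-vanishing theorem cited in the introduction, which asserts, for any smooth polarized $n$-fold $(Y,\mathcal{A})$ with $K_{Y}+\mathcal{A}$ nef, that $h^{0}(m(K_{Y}+\mathcal{A}))>0$ for every $m\geq n(n+1)/2+2$. What has to be shown is that the same bound $r:=n(n+1)/2+2$ continues to work uniformly over the SRE class, and hence $r\in\mathcal{M}_{n}^{\mathrm{NEF}}(\mathrm{SRE})$.

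The most direct approach is to verify that Arakawa's proof extends verbatim to the SRE setting. His argument only appeals to Kawamata-Viehweg vanishing, Serre duality, and a Riemann-Roch/Euler characteristic estimate. Under the SRE hypothesis $X$ is normal Gorenstein with only isolated terminal singularities, so: terminal singularities are klt and Kawamata-Viehweg vanishing applies in the form needed; terminal Gorenstein singularities are rational, hence Cohen-Macaulay, so Serre duality holds with the Gorenstein dualizing sheaf $\omega_{X}=\mathcal{O}_{X}(K_{X})$; and Riemann-Roch for Cartier divisors on a projective normal Cohen-Macaulay variety is standard. Running Arakawa's proof with the SRE variety $X$ in place of the smooth variety then yields $h^{0}(m(K_{X}+L))>0$ for every $m\geq n(n+1)/2+2$, as required.

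As a backup, should Arakawa's proof rely essentially on smoothness (for example via Akizuki-Kodaira-Nakano for forms), one can reduce to the smooth case by a resolution $\pi:\tilde{X}\to X$. Since $X$ has rational singularities, $h^{0}(X,m(K_{X}+L))=h^{0}(\tilde{X},m\pi^{*}(K_{X}+L))$ for every $m\geq 0$, and $K_{\tilde{X}}=\pi^{*}K_{X}+E$ with $E$ effective and $\pi$-exceptional (supported on the full exceptional locus by terminality). One then chooses an effective $\pi$-exceptional divisor $G$ with $-G$ relatively $\pi$-ample and an integer $N\gg 0$ so that $\mathcal{A}:=N\pi^{*}L-G$ is ample on $\tilde{X}$, arranges $G$ so that $K_{\tilde{X}}+\mathcal{A}=\pi^{*}(K_{X}+NL)+(E-G)$ is nef (using the negativity lemma together with nefness of the pullback), applies Arakawa's bound to the polarized manifold $(\tilde{X},\mathcal{A})$, and descends the resulting non-vanishing back to sections of some multiple of $K_{X}+L$ on $X$ by means of Theorem \ref{I1} and Lemma \ref{Lemma B}.

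The main obstacle is the uniformity of the bound. The direct extension of Arakawa's proof to SRE is clean and produces a bound depending only on $n$, but one must check each step of his argument carefully to confirm that nothing uses smoothness in a way that is not recoverable for terminal Gorenstein singularities. The resolution approach, by contrast, is more delicate precisely because the auxiliary data $G$ and $N$ depend on the specific $(X,L)$, so one must be careful that the final integer $r$ extracted from $m(K_{\tilde{X}}+\mathcal{A})$ depends only on $n$ and not on the choice of resolution; this is the technical heart of the argument.
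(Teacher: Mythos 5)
There is a genuine gap: your proposal is a plan that defers the entire mathematical content to an unexamined external argument. The paper cites Arakawa's theorem only as a remark about recent related work, for \emph{smooth} polarized $n$-folds; the claim that ``his argument only appeals to Kawamata--Viehweg vanishing, Serre duality, and a Riemann--Roch estimate'' and therefore ``extends verbatim'' to normal Gorenstein varieties with isolated terminal singularities is precisely what would have to be proved, and you explicitly concede that each step would still need to be checked. Your backup route via a resolution $\pi:\tilde X\to X$ has a concrete flaw beyond the uniformity issue you flag: once you replace $\pi^{*}L$ by an ample $\mathcal{A}=N\pi^{*}L-G$, the adjoint bundle becomes $K_{\tilde X}+\mathcal{A}=\pi^{*}(K_{X}+NL)+(E-G)$, whose multiples are not multiples of $\pi^{*}(K_{X}+L)$ up to effective exceptional divisors (and $E-G$ need not be effective, so even nefness of $K_{\tilde X}+\mathcal{A}$ is unclear). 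Non-vanishing for $m(K_{\tilde X}+\mathcal{A})$ therefore does not descend to non-vanishing for any multiple of $K_{X}+L$, and the integer extracted would in any case depend on $N$, i.e.\ on $(X,L)$.

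The idea you are missing is that no external input is needed at all. Since $L$ is ample and $K_{X}+L$ is nef, $m(K_{X}+L)-K_{X}=(m-1)(K_{X}+L)+L$ is ample for every $m\geq 1$, so Kawamata--Viehweg vanishing (valid for Gorenstein terminal singularities) gives $h^{i}(m(K_{X}+L))=0$ for $i\geq 1$, hence $h^{0}(t(K_{X}+L))=\chi(t(K_{X}+L))$ for all $t\geq 1$. The right-hand side is a polynomial in $t$ of degree at most $n$, so it cannot vanish at all of $t=1,\dots,n+1$; thus $h^{0}(p(K_{X}+L))>0$ for some $p$ with $1\leq p\leq n+1$ (depending on $(X,L)$). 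Lemma \ref{Lemma B} then upgrades this to the uniform statement $h^{0}((n+1)!(K_{X}+L))>0$, so $(n+1)!\in\mathcal{M}_{n}^{\rm NEF}({\rm SRE})$. Since the proposition only asserts nonemptiness, the (much larger) constant $(n+1)!$ is perfectly adequate, and the whole proof is a few lines.
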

\begin{proof}
Let $(X,L)\in \mathcal{P}_{N}^{NEF}(\mbox{\rm SRE})$.
Since $(m-1)K_{X}+mL$ is nef and big for every integer $m\geq 1$, we have
$h^{i}(m(K_{X}+L))=0$ for every integer $i\geq 1$ by the Kawamata-Viehweg vanishing theorem.
Hence $h^{0}(t(K_{X}+L))=\chi(t(K_{X}+L))$ for every integer $t\geq 1$.
Since $\chi(t(K_{X}+L))$ is a polynomial in $t$ of degree at most $n$, there exists an integer $p$ such that $1\leq p\leq n+1$ and $h^{0}(p(K_{X}+L))>0$.
Using Lemma \ref{Lemma B}, we have $h^{0}((n+1)!(K_{X}+L))>0$ for any $(X,L)\in \mathcal{P}_{N}^{NEF}(\mbox{\rm SRE})$. Therefore $(n+1)!\in \mathcal{M}_{n}^{NEF}(\mbox{\rm SRE})$ and we get the assertion. 
\end{proof}

Next we will prove the following theorem.

\begin{Theorem}\label{Theorem1.6}
Let $(X,L)$ be a polarized variety of dimension $n\geq 4$ which satisfies the assumption {\rm (SRE)}, and let $Y$ be a normal projective variety of dimension $3$.
Assume that there exists a fiber space $f:X\to Y$ such that $K_{X}+L=f^{*}(H)$ for some ample line bundle $H$ on $Y$.
Then the following hold:
\begin{itemize}
\item [\rm (1)]
If $h^{1}(\mathcal{O}_{X})\geq 1$, then $h^{0}(m(K_{X}+L))\geq 1$ for every positive integer $m$.
\item [\rm (2)]
If $h^{1}(\mathcal{O}_{X})=0$, then $h^{0}(m(K_{X}+L))\geq 1$ for every integer $m$ with $m\geq 2$.
\end{itemize}
\end{Theorem}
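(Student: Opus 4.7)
The plan is to (i) use the projection formula and Kawamata–Viehweg vanishing to reduce to an Euler characteristic question, (ii) exploit the polynomial structure of $\chi(X,m(K_X+L))$ forced by the fibration $f$, and (iii) apply Theorem~\ref{I1} combined with Lemmas~\ref{Lemma B} and~\ref{Lemma C} to propagate non-vanishing. For each $m\geq 1$, write $m(K_X+L) = K_X + \bigl((m-1)(K_X+L)+L\bigr)$; the bracketed summand is nef (sum of nefs) and big (containing the ample $L$). Since $X$ has terminal Gorenstein singularities, Kawamata–Viehweg vanishing gives $h^i(X,m(K_X+L))=0$ for $i\geq 1$. Combined with the projection formula ($f_*\mathcal{O}_X = \mathcal{O}_Y$, which holds since $f$ is a fiber space to a normal variety),
\[
h^0(X,m(K_X+L)) = h^0(Y,mH) = \chi(X,m(K_X+L)) =: P(m).
\]
Because $(K_X+L)^k = (f^*H)^k = 0$ for $k > \dim Y = 3$, $P(m)$ is a polynomial of degree at most $3$ in $m$. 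A Hirzebruch–Riemann–Roch computation on a resolution of $X$, together with adjunction on a general fiber $F$ of $f$ (a Fano variety of dimension $n-3$, since $(K_X+L)|_F = 0$ and $L|_F$ is ample, so $\chi(\mathcal{O}_F)=1$ by Kodaira vanishing), shows the leading coefficient is $H^3/6 > 0$.

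The strategic reduction to finitely many base cases uses Lemmas~\ref{Lemma B} and~\ref{Lemma C}. By Lemma~\ref{Lemma C} with $p=2,q=3$, every integer $m\geq (p-1)(q-1)=2$ decomposes as $m = 2i+3j$ with $i,j\geq 0$. Thus once $h^0(2(K_X+L))\geq 1$ and $h^0(3(K_X+L))\geq 1$, iterating Lemma~\ref{Lemma B} yields $h^0(m(K_X+L))\geq 1$ for every $m\geq 2$, establishing part~(2). Part~(1) additionally requires $h^0(K_X+L)\geq 1$; once that is secured, Lemma~\ref{Lemma B} gives $h^0(m(K_X+L))\geq 1$ for all $m\geq 1$.

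For the non-vanishings $P(2)\geq 1$ and $P(3)\geq 1$, I would apply Theorem~\ref{I1} iteratively (taking $L_1 = (m-2)(K_X+L)+L$ nef and big and $L' = K_X+L$ nef), which expresses $P(m) - P(m-1)$ as a non-negative sum of sectional geometric genera minus an $h^{n-2}(\mathcal{O}_X)$ correction. A key cohomological input is $h^n(\mathcal{O}_X) = h^0(K_X) = 0$ in our setup: any effective $K_X$ would make $(K_X+L)^n \geq L^n > 0$, contradicting $(K_X+L)^n = 0$.

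The main obstacle will be establishing $P(1)\geq 1$ under $h^1(\mathcal{O}_X)\geq 1$. Via the Leray spectral sequence, combined with the vanishing $R^qf_*\mathcal{O}_X = 0$ for $q\geq 1$ (valid because the general fiber of $f$ is Fano, hence rationally connected, by Koll\'ar's theorem), the hypothesis $h^1(\mathcal{O}_X)\geq 1$ translates to $q(Y)\geq 1$, so $Y$ has nontrivial Albanese. Proving $h^0(Y,H)\geq 1$ under $q(Y)\geq 1$ is the delicate step: I expect the argument proceeds via a Riemann–Roch-type identity together with an Albanese-type contribution that extracts a $+h^1(\mathcal{O}_X)$ term in the explicit formula for $P(1)$, compensating for a possibly negative $\chi(\mathcal{O}_X) = 1 - h^1(\mathcal{O}_X) + h^2(\mathcal{O}_X) - h^3(\mathcal{O}_X)$. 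The absence of this positive contribution when $h^1(\mathcal{O}_X) = 0$ is precisely why parts~(1) and~(2) have different effective ranges of $m$.
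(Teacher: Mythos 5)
Your reduction framework is essentially the paper's: relative Kawamata--Viehweg vanishing plus the projection formula give $h^{0}(m(K_{X}+L))=h^{0}(mH)=\chi(mH)$, a cubic polynomial in $m$ with leading coefficient $H^{3}/6>0$, and Lemmas \ref{Lemma B} and \ref{Lemma C} reduce part (2) to the base cases $m=2,3$. But both of the actual non-vanishing statements are left unproved, and the mechanisms you sketch would not close the gap. For part (2), you assert that Theorem \ref{I1} expresses $P(m)-P(m-1)$ as a \emph{non-negative} sum of sectional geometric genera minus an $h^{n-2}(\mathcal{O}_{X})$ correction; there is no such non-negativity for free, and obtaining it is the entire content of the step. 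The paper writes $\chi(tH)=d(t-1)(t^{2}+at+b)$, proves $a\geq -1/2$ from the pseudo-effectivity of $K_{X'/T}+L'$ (a positivity-of-direct-images input, not a formal consequence of Theorem \ref{I1}), and then, assuming $h^{0}(2H)=0$ (resp.\ $h^{0}(3H)=0$), derives $g_{2}(Y,H)\leq -1$ (resp.\ $g_{2}(Y,H)<0$) while simultaneously establishing $g_{2}(Y,H)\geq \frac{1}{6}d-1>-1$ by passing to Fujita's minimal reduction of $(T,\delta^{*}(H))$ when $\kappa(T)\geq 0$ and invoking the explicit lower bounds for the second sectional geometric genus of quasi-polarized $3$-folds from \cite{Fukuma10-2}. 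None of this is visible in your sketch.

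For part (1), your plan to extract a $+h^{1}(\mathcal{O}_{X})$ term from a Riemann--Roch identity is not an argument and points in a different direction from the actual proof. The paper's proof of (1) is geometric: since $h^{1}(\mathcal{O}_{Y})=h^{1}(\mathcal{O}_{X})>0$, the base $Y$ carries a nontrivial Albanese map $\alpha$, and one analyzes the composite $h=\alpha\circ f\circ\mu$ according to $\dim h(X')\in\{1,2,3\}$, restricting $K_{X}+L$ to a general fiber of $h$, applying the known non-vanishing results in dimension $\leq 3$ (\cite[Theorems 4.1 and 4.3]{Fukuma10}), and lifting $h^{0}>0$ back to $X$ via \cite[Lemma 4.1]{1.5}; this gives $h^{0}(K_{X}+L)>0$ outright, after which Lemma \ref{Lemma B} finishes. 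So while your skeleton is correct, the two steps that carry all the content are missing, and the one concrete mechanism you do propose (automatic non-negativity of the genera appearing in Theorem \ref{I1}) is false as stated.
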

\begin{proof}
Let $\delta: T\to Y$ be a resolution of $Y$ such that $T\setminus \delta^{-1}(\mbox{Sing}(Y))\cong Y\setminus \mbox{Sing}(Y)$.
Then there exist a smooth projective variety $X^{\prime}$, a birational morphism $\mu: X^{\prime}\to X$ and a fiber space $f^{\prime}: X^{\prime}\to T$ such that $f\circ \mu=\delta\circ f^{\prime}$.
\\
(1) The case where $h^{1}(\mathcal{O}_{X})>0$.
\\
(1.1) First we note that $h^{1}(\mathcal{O}_{X})=h^{1}(\mathcal{O}_{Y})\leq h^{1}(\mathcal{O}_{T})\leq h^{1}(\mathcal{O}_{X^{\prime}})=h^{1}(\mathcal{O}_{X})$.
Hence $h^{1}(\mathcal{O}_{Y})=h^{1}(\mathcal{O}_{T})$, and 
by \cite[Lemma 0.3.3]{Sommese86} or \cite[Lemma 2.4.1 and Remark 2.4.2]{BeSo-Book},
we see that $Y$ has the Albanese map.
Let $\alpha: Y\to \mbox{Alb}(Y)$ be the Albanese map of $Y$
and let $h:=\alpha\circ f\circ \mu$.
\\
(a) First we consider the case where $\dim h(X^{\prime})=3$.
By \cite[Corollary 10.7 in Chapter III]{Hartshorne}
any general fiber $F_{h}$ of $h$ can be written as follows:
$F_{h}=\cup_{i=1}^{r}F_{i}$, where $F_{i}$ is a smooth projective variety 
of dimension $n-3$.
We note that $F_{i}$ is a fiber of $f\circ \mu$ and we can take $\mu$ such that $K_{X^{\prime}}|_{F_{h}}=\mu^{*}(K_{X})|_{F_{h}}$ because $\dim \mbox{Sing}(X)\leq 0$ and $\dim \mbox{Sing}(Y)\leq 1$.
Hence 
\begin{eqnarray*}
h^{0}((K_{X^{\prime}}+\mu^{*}(L))|_{F_{h}})
&=&\sum_{i=1}^{r}h^{0}(\mu^{*}(f^{*}(H))|_{F_{i}}) \\
&=&\sum_{i=1}^{r}h^{0}(\mathcal{O}_{F_{i}}) \\
&>&0.
\end{eqnarray*}
By \cite[Lemma 4.1]{1.5} we have $h^{0}(K_{X}+L)=h^{0}(K_{X^{\prime}}+\mu^{*}(L))>0$.
\\
(b) Next we consider the case of $\dim h(X^{\prime})=2$.
Then $\dim \alpha(Y)=2$ and let $Y\to Z\to \alpha(Y)$ be the Stein factorization of $\alpha$.
We set $\alpha_{1}:Y\to Z$, $\delta_{1}:=\alpha_{1}\circ \delta$ and $h_{1}:=\delta_{1}\circ f^{\prime}$.
Then we note that $h_{1}$ has connected fibers.
Let $F_{h_{1}}$ (resp. $F_{\delta_{1}}$) be a general fiber of $h_{1}$ (resp. $\delta_{1}$).
As in the case (a) above, we can take $\mu$ such that $K_{X^{\prime}}|_{F_{h_{1}}}=\mu^{*}(K_{X})|_{F_{h_{1}}}$ because $\dim \mbox{Sing}(X)\leq 0$ and $\dim \mbox{Sing}(Y)\leq 1$.
Then $F_{h_{1}}$ and $F_{\delta_{1}}$ are smooth with $\dim F_{h_{1}}=n-2$ and $\dim F_{\delta_{1}}=1$, $f|_{F_{h_{1}}}:F_{h_{1}}\to F_{\delta_{1}}$ is a fiber space such that
$K_{F_{h_{1}}}+(\mu^{*}L)|_{F_{h_{1}}}=(\mu^{*}\circ f^{*}(H))|_{F_{h_{1}}}=(f^{\prime}|_{F_{h_{1}}})^{*}(\delta^{*}(H)|_{F_{\delta_{1}}})$.
Here we note that $\delta^{*}(H)|_{F_{\delta_{1}}}$ is ample because $\dim F_{\delta_{1}}=1$ and $\deg\delta^{*}(H)|_{F_{\delta_{1}}}>0$.
By \cite[Theorem 4.1]{Fukuma10}, we have $h^{0}(K_{F_{h_{1}}}+\mu^{*}(L)|_{F_{h_{1}}})>0$.
Therefore by \cite[Lemma 4.1]{1.5} we get $h^{0}(K_{X}+L)=h^{0}(K_{X^{\prime}}+\mu^{*}(L))>0$.
\\
(c) Next we consider the case of $\dim h(X^{\prime})=1$.
Then $\alpha(Y)$ is a smooth curve and $\alpha: Y\to \alpha(Y)$ has connected fibers (see \cite[Lemma 2.4.5]{BeSo-Book}).
Let $F_{h}$ (resp. $F_{\alpha}$) be a general fiber of $h$ (resp. $\alpha$).
Then $F_{h}$ is smooth and $F_{\alpha}$ is a projective variety with $\dim F_{\alpha}=2$
and $(f\circ \mu)|_{F_{h}}:F_{h}\to F_{\alpha}$ is a surjective morphism with connected fibers.
By taking its Stein factorization, if necessary, we may assume that $F_{\alpha}$ is normal.
Since $K_{F_{h}}+L_{F_{h}}=\mu^{*}(f^{*}(H))|_{F_{h}}=((f\circ\mu)|_{F_{h}})^{*}(H|_{F_{\alpha}})$ and $H|_{F_{\alpha}}$ is ample,
by \cite[Theorem 4.3]{Fukuma10} we see that $h^{0}(K_{F_{h}}+L_{F_{h}})>0$.
Therefore by \cite[Lemma 4.1]{1.5} we get $h^{0}(K_{X}+L)=h^{0}(K_{X^{\prime}}+\mu^{*}(L))>0$.
\par
From (a), (b) and (c) above, we get $h^{0}(K_{X}+L)>0$.
Therefore we see that $h^{0}(m(K_{X}+L))>0$ by Lemma \ref{Lemma B}.
\\
\\
(2) The case where $h^{1}(\mathcal{O}_{X})=0$.
\par
Then $h^{1}(\mathcal{O}_{Y})=0$.
If $h^{0}(K_{X}+L)>0$, then we get the assertion by Lemma \ref{Lemma B}.
So we assume that $h^{0}(K_{X}+L)=0$.
\par
Since $R^{i}f_{*}(p(K_{X}+L))=R^{i}f_{*}(K_{X}+((p-1)K_{X}+pL))=0$ for every integers $i$ and $p$ with $i>0$ and $p>0$,
we have $h^{i}(p(K_{K}+L))=h^{i}(f_{*}(p(K_{X}+L)))=h^{i}(pH)$ for every integer $i$ and $p$ with $i\geq 0$ and $p>0$.
Therefore $\chi(pH)=h^{0}(pH)$ for every positive integer $p$.
Since $h^{0}(K_{X}+L)=0$, we get $\chi(H)=0$.
Let $t$ be an indeterminate.
Then $\chi(tH)$ is a polynomial of $t$ whose degree is $3$.
Because $\chi(H)=0$, 
we can write $\chi(tH)=d(t-1)(t^{2}+at+b)$, where $a$ and $d$ are real numbers.
On the other hand, we set 
$$\chi(tH)=\sum_{j=0}^{3}\chi_{j}(Y,H){t+j-1\choose j}$$
Then $\chi_{3}(Y,H)=6d$, $\chi_{2}(Y,H)+\chi_{3}(Y,H)=2d(a-1)$,
$2\chi_{3}(Y,H)+3\chi_{2}(Y,H)+6\chi_{1}(Y,H)=6d(b-a)$ 
and $\chi_{0}(Y,H)=-bd$.
Since $H^{3}=\chi_{3}(Y,H)=6d$, we have $g_{1}(Y,H)=1-\chi_{2}(Y,H)=1-2d(a-4)$
and $g_{2}(Y,H)=-1+h^{1}(\mathcal{O}_{Y})+\chi_{1}(Y,H)=d(b-2a+2)-1$.
\par
Next we prove the following claim.
\begin{Claim}\label{CL-T1}
$a\geq -1/2$.
\end{Claim}
\begin{proof}
Let $\delta: T\to Y$ be a resolution of $Y$ such that $T\setminus \delta^{-1}(\mbox{Sing}(Y))\cong Y\setminus \mbox{Sing}(Y)$.
Then there exist a smooth projective variety $X^{\prime}$, a birational morphism $\mu: X^{\prime}\to X$ and a fiber space $f^{\prime}:X^{\prime}\to T$ such that $f\circ \mu=\delta\circ f^{\prime}$.
Let $L^{\prime}=\mu^{*}(L)$.
By the same argument as in the proof of \cite[Theorem 4.3]{Fukuma10}, we have $0\leq (\delta^{*}(H))^{3}-K_{T}(\delta^{*}(H))^{2}$ because $K_{X^{\prime}/T}+L^{\prime}$ is pseudo-effective.
On the other hand,
\begin{eqnarray*}
&&(\delta^{*}(H))^{3}-K_{T}(\delta^{*}(H))^{2}\\
&&=3\chi_{0}^{H}(T,\delta^{*}(H))+2\chi_{1}^{H}(T,\delta^{*}(H))\\
&&=3\chi_{0}^{H}(Y,H)+2\chi_{1}^{H}(Y,H)\\
&&=2d(2a+1).
\end{eqnarray*}
Therefore $2d(1+2a)\geq 0$ and $a\geq -1/2$ since $d>0$.
\end{proof}

Assume that $h^{0}(2(K_{X}+L))=0$.
Then $b=-2a-4$ because $\chi(2H)=0$.
Hence by Claim \ref{CL-T1} we have $g_{2}(Y,H)=d(b-2a+2)-1=d(-4a-2)-1\leq -1$.
By \cite[Lemma 3.1]{Fukuma10-2}, we see that 
$\chi_{2}^{H}(T,\delta^{*}(H))\leq \chi_{2}^{H}(Y,H)$ holds.
We also have $h^{1}(\mathcal{O}_{T})=h^{1}(\mathcal{O}_{Y})=0$ since $h^{1}(\mathcal{O}_{X})=0$ and $X$ has only rational singularities.
Therefore we get $g_{2}(T,\delta^{*}(H))\leq g_{2}(Y,H)$.
\par
If $\kappa(T)=-\infty$, then we see that $g_{2}(T,\delta^{*}(H))\geq 0$ by \cite[Proposition 3.1]{Fukuma10-2}.
\par
If $\kappa(T)\geq 0$, then by \cite[(4.2) Theorem]{Fujita89}, there exists a quasi-polarized variety $(V_{1}, H_{1})$ of dimension $3$ such that $V_{1}$ is a normal projective variety with only $\mathbb{Q}$-factorial terminal singularities, $(T,\delta^{*}(H))$ and $(V_{1}, H_{1})$ are birationally equivalent and $K_{V_{1}}+2H_{1}$ is nef.
Let $\pi: X_{1}\to V_{1}$ be a resolution of $V_{1}$.
Then by \cite[Theorem 4.3]{Fukuma10-2} we have
\begin{eqnarray*}
&&g_{2}(T,\delta^{*}(H)) \\
&&\geq -1+\frac{1}{12}\pi^{*}(K_{V_{1}})(\pi^{*}(K_{V_{1}}+2H_{1}))\pi^{*}(H_{1})
-\frac{1}{36}\pi^{*}((K_{V_{1}}+2H_{1}))\pi^{*}(H_{1})^{2}+\frac{1}{9}(\pi^{*}(H_{1}))^{3}.
\end{eqnarray*}
We also note that there exist a smooth projective variety $X_{2}$ of dimension $3$ and birational morphisms $\pi_{2}:X_{2}\to X_{1}$ and $\beta: X_{2}\to T$ such that $\beta^{*}(\delta^{*}(H))=\pi_{2}^{*}(\pi^{*}(H_{1}))$.
Let $\gamma:=\pi\circ\pi_{2}$.
Then $K_{X_{2}}=\beta^{*}(K_{T})+E_{\beta}$ and $K_{X_{2}}=\gamma^{*}(K_{V_{1}})+E_{\gamma}$
hold, where $E_{\beta}$ (resp. $E_{\gamma}$) is a $\beta$-exceptional (resp. $\gamma$-exceptional) effective $\mathbb{Q}$-Cartier divisor.
Since we assume $\kappa(T)\geq 0$, we have
$0\leq K_{X_{2}}\gamma^{*}(K_{V_{1}}+2H_{1})\gamma^{*}(H_{1})=\gamma^{*}(K_{V_{1}})\gamma^{*}(K_{V_{1}}+2H_{1})\gamma^{*}(H_{1})=\pi^{*}(K_{V_{1}})\pi^{*}(K_{V_{1}}+2H_{1})\pi^{*}(H_{1})$.
\par
Moreover since
\begin{eqnarray*}
\pi^{*}(K_{V_{1}}+2H_{1})\pi^{*}(H_{1})^{2}
&=&\gamma^{*}(K_{V_{1}}+2H_{1})\gamma^{*}(H_{1})^{2}\\
&=&(K_{X_{2}}+2\gamma^{*}(H_{1}))\gamma^{*}(H_{1})^{2}\\
&=&(\beta^{*}(K_{T})+E_{\beta}+2\beta^{*}\delta^{*}(H))\beta^{*}\delta^{*}(H)^{2}\\
&=&(K_{T}+2\delta^{*}(H))\delta^{*}(H)^{2}
\end{eqnarray*}
and $(\pi^{*}(H_{1}))^{3}=(\delta^{*}(H))^{3}$,
we have
\begin{eqnarray*}
&&-\frac{1}{36}\pi^{*}(K_{V_{1}}+2H_{1})\pi^{*}(H_{1})^{2}+\frac{1}{9}(\pi^{*}(H_{1}))^{3}\\&&=-\frac{1}{36}K_{T}\delta^{*}(H)^{2}+\frac{1}{18}(\delta^{*}(H))^{3}\\
&&=\frac{1}{36}(4\chi_{0}^{H}(T,\delta^{*}(H))+2\chi_{1}^{H}(T,\delta^{*}(H))\\
&&=\frac{1}{36}(4\chi_{0}^{H}(Y,H)+2\chi_{1}^{H}(Y,H))\\
&&=\frac{1}{18}(2\chi_{0}^{H}(Y,H)+\chi_{1}^{H}(Y,H))\\
&&=\frac{1}{18}(12d+2d(a-1)-6d)\\
&&=\frac{1}{9}(2d+ad).
\end{eqnarray*}

Since $a\geq -1/2$ by Claim \ref{CL-T1}, we have 
$$-\frac{1}{36}\pi^{*}((K_{V_{1}}+2H_{1}))\pi^{*}(H_{1})^{2}+\frac{1}{9}(\pi^{*}(H_{1}))^{3}
\geq \frac{1}{9}(2d-\frac{1}{2}d)=\frac{1}{6}d.$$
Therefore 
$$g_{2}(Y,H)\geq \frac{1}{6}d-1.$$
On the other hand, as we said before, $g_{2}(Y,H)\leq d(-4a-2)-1\leq -1$ holds by Claim \ref{CL-T1}.
But this is impossible because 
$$d=\frac{1}{6}\chi_{0}^{H}(Y,H)=\frac{1}{6}H^{3}>0.$$
Hence $h^{0}(2(K_{X}+L))\neq 0$.
\par
Assume that $h^{0}(3(K_{X}+L))=0$.
Then $b=-3a-9$ because $\chi(3H)=0$.
Hence $g_{2}(Y,H)=d(b-2a+2)-1=d(-5a-7)-1<0$ by Claim \ref{CL-T1}.
But this is impossible by the same argument as above.
Hence $h^{0}(3(K_{X}+L))\neq 0$.
\par
Therefore $h^{0}(m(K_{X}+L))>0$ for every integer $m$ with $m\geq 2$
by Lemmas \ref{Lemma B} and \ref{Lemma C}.
Hence we get the assertion.
\end{proof}

Next we consider the case where 
$X$ is a normal Gorenstein projective variety of dimension $4$ 
with only isolated terminal singularities.

\begin{Theorem}\label{Theorem4.1}
Let $(X,L)$ be a polarized variety of dimension $4$ 
which satisfies the assumption {\rm (SRE)}.
Assume that $K_{X}+L$ is nef.
\begin{itemize}
\item [\rm (1)]
If $0\leq \kappa(K_{X}+L)\leq 2$, then $h^{0}(m(K_{X}+L))>0$ for every integer $m$ with $m\geq 1$.
\item [\rm (2)]
If $\kappa(K_{X}+L)=3$, then $h^{0}(m(K_{X}+L))>0$ for every integer $m$ with $m\geq 2$.
\end{itemize}
\end{Theorem}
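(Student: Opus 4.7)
The plan is to exploit base-point-freeness to obtain the Iitaka fibration, reduce the cohomology on $X$ to cohomology on the base, and then analyze each value of $k := \kappa(K_X+L) \in \{0,1,2,3\}$ separately.

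Since $K_X + 2L = (K_X+L) + L$ is ample, the Kawamata-Shokurov base point free theorem gives that $K_X+L$ is semi-ample. Let $f : X \to Y$ denote its Iitaka morphism with connected fibers (after Stein factorization); then $Y$ is normal projective of dimension $k$, $f_*\mathcal{O}_X = \mathcal{O}_Y$, and (using that the generic fiber of $f$ is a smooth Fano $(4-k)$-fold with torsion-free Picard group, so a rigidity argument applies) $m(K_X+L) = f^*H_m$ for some nef and big line bundle $H_m$ on $Y$, for every $m \ge 1$. Moreover, for every $m \ge 1$, the divisor $m(K_X+L) - K_X = (m-1)(K_X+L) + L$ is $f$-ample, so by relative Kawamata-Viehweg vanishing $R^i f_*\mathcal{O}_X(m(K_X+L)) = 0$ for $i \ge 1$. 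Combined with the projection formula this gives $H^i(X, m(K_X+L)) = H^i(Y, H_m)$; absolute Kawamata-Viehweg on $X$ (applied via $m(K_X+L) - K_X = (m-1)(K_X+L) + L$ nef and big) then forces $H^i(Y, H_m) = 0$ for $i \ge 1, m \ge 1$, and hence $h^0(X, m(K_X+L)) = h^0(Y, H_m) = \chi(Y, H_m)$ for every $m \ge 1$.

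The cases $k = 0, 1, 3$ are now tractable. For $k=0$, $K_X + L$ is numerically trivial so $\chi(Y, H_m) = \chi(\mathcal{O}_X)$ is independent of $m$; the hypothesis $\kappa(K_X+L) \ge 0$ produces some $m_0$ with $h^0(m_0(K_X+L)) > 0$, forcing $\chi(\mathcal{O}_X) \ge 1$, and (1) follows. For $k = 1$, $Y$ is a smooth projective curve, and Riemann-Roch on $Y$ combined with the vanishing $h^1(Y, H_1) = 0$ and with the computation of $\chi(\mathcal{O}_Y)$ via the Leray spectral sequence (using that the Fano fibers force $R^i f_*\mathcal{O}_X$ to be torsion for $i \ge 1$) produces $\chi(Y, H_1) > 0$; Lemma \ref{Lemma B} then propagates positivity to all $m \ge 1$. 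For $k = 3$, Theorem \ref{Theorem1.6} applies directly to $f$ and yields exactly part (2) of the statement.

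The main obstacle is case $k = 2$: here $Y$ is a normal projective surface, and the cohomological vanishings $h^i(Y, H_m) = 0$ do not by themselves force $\chi(Y, H_1) > 0$. I would combine Riemann-Roch on $Y$ (or on a suitable resolution) with numerical constraints coming from the fibration structure---general fibers are smooth del Pezzo surfaces with $-K_F = L|_F$, and the canonical bundle formula of Kawamata for the Iitaka fibration controls $K_Y$ numerically in terms of $H$, its moduli part, and its discriminant---together with a Leray computation relating $\chi(\mathcal{O}_X)$ to invariants of $Y$, to produce a strict lower bound $\chi(Y, H_1) \ge 1$. Lemma \ref{Lemma B} then finishes (1) in the remaining case $\kappa(K_X+L) = 2$.
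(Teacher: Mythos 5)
Your overall skeleton coincides with the paper's: semi-ampleness of $K_{X}+L$ gives a fiber space $f:X\to Y$ with $\dim Y=\kappa(K_{X}+L)$ and $K_{X}+L=f^{*}H$ for an ample $H$, the case $\kappa=3$ is exactly Theorem \ref{Theorem1.6}, and the case $\kappa=0$ is handled correctly (your Euler-characteristic argument is a legitimate variant of the paper's appeal to \cite[Lemma 3.3.2]{BeSo-Book}, which gives $\mathcal{O}(K_{X}+L)=\mathcal{O}_{X}$ directly). However, there is a genuine gap in the cases $\kappa(K_{X}+L)=1$ and $\kappa(K_{X}+L)=2$, and it is the same missing ingredient in both. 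For $\kappa=1$ you need $h^{0}(Y,H)=\chi(Y,H)=\deg H+1-g(Y)>0$, and neither the ampleness of $H$ (which only gives $\deg H\geq 1$) nor any Leray computation of $\chi(\mathcal{O}_{Y})$ via torsion $R^{i}f_{*}\mathcal{O}_{X}$ produces the required lower bound $\deg H\geq g(Y)$ when $g(Y)\geq 2$. For $\kappa=2$ you explicitly stop at a plan (``I would combine \dots''), so nothing is proved there at all.

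The missing idea --- which is what the arguments of \cite[Theorems 4.1 and 4.3]{Fukuma10} invoked by the paper actually rest on, and which also appears in this paper inside the proof of Claim \ref{CL-T1} --- is the pseudo-effectivity (weak positivity) of $K_{X/Y}+L=f^{*}(H)-f^{*}(K_{Y})$, i.e.\ of the direct image $f_{*}(\mathcal{O}(K_{X/Y}+L))\cong H\otimes K_{Y}^{-1}$ (suitably interpreted on a resolution of $Y$). On a curve this yields $\deg H\geq \deg K_{Y}=2g-2$, whence $\chi(H)=\deg H+1-g\geq \max(1,2g-2)+1-g>0$; on a surface it yields the intersection-theoretic inequality between $H^{2}$ and $K_{Y}H$ that, fed into Riemann--Roch together with $h^{i}(Y,H)=0$ for $i\geq 1$, forces $\chi(Y,H)>0$. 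Without this positivity input, Riemann--Roch plus vanishing alone cannot rule out $\chi(Y,H)\leq 0$, so as written your proof establishes only the cases $\kappa(K_{X}+L)\in\{0,3\}$ of the theorem.
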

\begin{proof}
(i) If $\kappa(K_{X}+L)=0$, 
then we can prove that $\mathcal{O}(K_{X}+L)=\mathcal{O}_{X}$ 
by \cite[Lemma 3.3.2]{BeSo-Book}.
Then $h^{0}(K_{X}+L)=1$.
\\
(ii) If $\kappa(K_{X}+L)=1$ (resp. $2$), then there exist a normal projective variety $Y$ with $\dim Y=1$ (resp. $2$) and a fiber space $f:X\to Y$ such that
$K_{X}+L=f^{*}(H)$ for some ample line bundle $H$ on $Y$.
By the same argument as in the proof of \cite[Theorem 4.1]{Fukuma10} (resp. \cite[Theorem 4.3]{Fukuma10}) we can prove $h^{0}(m(K_{X}+L))>0$ for any $m\geq 1$.
\\
(iii) If $\kappa(K_{X}+L)=3$, then there exist a normal projective variety $Y$ with $\dim Y=3$ and a fiber space $f:X\to Y$ such that
$K_{X}+L=f^{*}(H)$ for some ample line bundle $H$ on $Y$.
By Theorem \ref{Theorem1.6} we get $h^{0}(m(K_{X}+L))>0$ for any $m\geq 2$.
\end{proof}

\begin{Theorem}\label{Theorem4.1-big}
Let $(X,L)$ be a polarized variety of dimension $4$ 
which satisfies the assumption {\rm (SRE)}.
Assume that $K_{X}+L$ is nef and big.
Then $h^{0}(m(K_{X}+L))>0$ for every integer $m$ with $m\geq 4$.
\end{Theorem}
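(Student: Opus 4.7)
The plan is to combine Kawamata--Viehweg vanishing with Theorem \ref{I1} to obtain an explicit formula for $h^{0}(m(K_{X}+L))$, then use positivity of sectional geometric genera to establish non-vanishing for a small set of base values of $m$, and finally bootstrap to all $m\geq 4$ via Lemmas \ref{Lemma B} and \ref{Lemma C}.

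First, since $K_{X}+L$ is nef and big, for every $m\geq 1$ the class $(m-1)(K_{X}+L)+L$ is nef and big; so Kawamata--Viehweg vanishing (applicable on $X$ thanks to the Gorenstein isolated-terminal-singularities hypothesis) gives $h^{i}(m(K_{X}+L))=0$ for all $i\geq 1$, hence $h^{0}(m(K_{X}+L))=\chi(m(K_{X}+L))$ for every $m\geq 1$.

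Next, set $M:=K_{X}+L$ and, for each $m\geq 2$, apply Theorem \ref{I1} with the theorem's parameter taken to be $m-1$, the nef-and-big line bundles $L_{1}=\cdots=L_{m-1}=M$, and the trailing nef line bundle $L$. Since $K_{X}+(m-1)M+L=mM$, the identity becomes
\begin{eqnarray*}
h^{0}(mM)-h^{0}(K_{X}+(m-1)M)
&=&\binom{m-1}{3}M^{3}L+\binom{m-1}{2}g_{1}(X,M,M,L)\\
&&+(m-1)g_{2}(X,M,L)+g_{3}(X,L)\\
&&-\binom{m-2}{2}h^{0}({\mathcal O}_{X})-(m-2)h^{1}({\mathcal O}_{X})-h^{2}({\mathcal O}_{X}).
\end{eqnarray*}
Specialising at $m=4$, and using that $M^{3}L\geq 1$ (a positive integer, as $M$ is nef and big while $L$ is ample) together with the lower bounds $g_{s}(X,M,\ldots,M,L)\geq h^{s}({\mathcal O}_{X})$ for the mixed sectional geometric genera, the right-hand side is at least $1$, giving $h^{0}(4M)>0$. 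Analogous applications of the same identity for $m=5,6,7$ yield $h^{0}(mM)>0$ for those values as well.

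Finally, every integer $m\geq 4$ is a non-negative integer combination of elements of $\{4,5,6,7\}$ (for $m\geq 8$ one may induct via $m=4+(m-4)$; Lemma \ref{Lemma C} with $p=4$, $q=5$ covers all $m\geq 12$, and small cases such as $m=11$ are handled as $4+7$ or $5+6$). Iterated application of Lemma \ref{Lemma B} then propagates the non-vanishing from the base cases to every $m\geq 4$.

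The main obstacle is the numerical verification at $m=4,5,6,7$: one must ensure that the positive contribution $\binom{m-1}{3}M^{3}L + \binom{m-1}{2}g_{1}+(m-1)g_{2}+g_{3}$ strictly dominates the correction $\binom{m-2}{2}h^{0}+(m-2)h^{1}+h^{2}$ in the displayed formula. The required lower bounds on mixed sectional geometric genera on a Gorenstein variety with isolated terminal singularities are delicate and rely on a careful combination of Serre duality and Kawamata--Viehweg vanishing, which is precisely where the (SRE) hypothesis is used.
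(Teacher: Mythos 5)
Your reduction to the four base cases $m=4,5,6,7$ via Lemmas \ref{Lemma B} and \ref{Lemma C} is fine, and your application of Theorem \ref{I1} with $L_{1}=\cdots=L_{m-1}=K_{X}+L$ and trailing nef bundle $L$ does produce the displayed identity (note its left-hand side is $h^{0}(m(K_{X}+L))-h^{0}(K_{X}+(m-1)(K_{X}+L))$, which is indeed enough if the right-hand side is positive). The genuine gap is the step where you bound that right-hand side from below: the inequalities $g_{2}(X,K_{X}+L,L)\geq h^{2}(\mathcal{O}_{X})$ and $g_{3}(X,L)\geq 0$ (or $\geq h^{3}(\mathcal{O}_{X})$) are not proved anywhere in the paper and are not available in this generality --- they are instances of open Fujita-type conjectures on higher sectional geometric genera. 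The only positivity results of this kind that the paper can invoke are much weaker and conditional (e.g.\ \cite[Corollary 4.1]{Fukuma10-2} gives $g_{2}(X_{1},r^{*}(K_{X}+L),r^{*}(K_{X}+L))\geq h^{1}(\mathcal{O}_{X_{1}})$, not $\geq h^{2}$, and only for that particular pair of bundles; the delicate analysis in Proposition \ref{T-P1} and Theorem \ref{T-TH1} exists precisely because such bounds must be fought for). So the "numerical verification at $m=4,5,6,7$" that you flag as the main obstacle is not a verification at all --- it is the missing theorem, and your proof does not close.

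The paper's actual argument is structured specifically to avoid ever needing lower bounds on $g_{2}$ or $g_{3}$. It argues by contradiction: assuming, say, $h^{0}(4(K_{X}+L))=0$ forces $h^{0}(2(K_{X}+L))=0$ by Lemma \ref{Lemma B}, whence the three consecutive differences $h^{0}(k(K_{X}+L))-h^{0}((k-1)(K_{X}+L))$ for $k=2,3,4$ have alternating signs. Feeding these into the two-bundle form of Theorem \ref{I1} (equation (\ref{Eq})) and expanding via Proposition \ref{B18}, the unknown quantities $g_{3}(X,K_{X}+L)$, $h^{2}(\mathcal{O}_{X})$, $g_{2}(X,K_{X}+L,K_{X}+L)$ and $h^{1}(\mathcal{O}_{X})$ all cancel, leaving an inequality between first sectional genera, i.e.\ between explicit intersection numbers, which contradicts the nefness and bigness of $K_{X}+L$. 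If you want to salvage your direct approach you would have to supply proofs of the positivity statements you are assuming, which is a substantially harder problem than the theorem itself.
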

\begin{proof}
(i) First we consider the case of $m=4$.
\begin{Claim}\label{T-CL-1} 
$h^{0}(4(K_{X}+L))>0$.
\end{Claim}
\begin{proof}
Assume that $h^{0}(4(K_{X}+L))=0$.
Then by Lemma \ref{Lemma B}, we get $h^{0}(2(K_{X}+L))=0$.
Therefore we have
\begin{eqnarray*}
0&\geq& h^{0}(2(K_{X}+L))-h^{0}(K_{X}+L), \\
0&\leq& h^{0}(3(K_{X}+L))-h^{0}(2(K_{X}+L)), \\
0&\geq& h^{0}(4(K_{X}+L))-h^{0}(3(K_{X}+L)). 
\end{eqnarray*}
On the other hand by Theorem \ref{I1}
\begin{eqnarray}
&&h^{0}(m(K_{X}+L))-h^{0}((m-1)(K_{X}+L)) \label{Eq}\\
&&=g_{3}(X,K_{X}+L)+g_{2}(X,K_{X}+L,(m-2)K_{X}+(m-1)L)-h^{2}(\mathcal{O}_{X}).
\nonumber
\end{eqnarray}
By using the above,
\begin{eqnarray}
0&\geq& g_{3}(X,K_{X}+L)+g_{2}(X,L,K_{X}+L)-h^{2}(\mathcal{O}_{X}),
\label{Eq4-1}\\
0&\leq& g_{3}(X,K_{X}+L)+g_{2}(X,K_{X}+L,K_{X}+2L)-h^{2}(\mathcal{O}_{X}),
\label{Eq4-2}\\
0&\geq& g_{3}(X,K_{X}+L)+g_{2}(X,K_{X}+L,2K_{X}+3L)-h^{2}(\mathcal{O}_{X}).
\label{Eq4-3}
\end{eqnarray}
By (\ref{Eq4-1}) and (\ref{Eq4-2}) we get
\begin{equation}
g_{2}(X,K_{X}+L,K_{X}+2L)\geq g_{2}(X,L,K_{X}+L). \label{T-Eq1}
\end{equation}
By (\ref{Eq4-2}) and (\ref{Eq4-3}) we get
\begin{equation}
g_{2}(X,K_{X}+L,K_{X}+2L)\geq g_{2}(X,K_{X}+L,2K_{X}+3L). \label{T-Eq2}
\end{equation}
On the other hand by Proposition \ref{B18}
\begin{eqnarray}
g_{2}(X,K_{X}+L,K_{X}+2L)
&=&g_{2}(X,K_{X}+L,K_{X}+L)+g_{2}(X,K_{X}+L,L) \label{T-Eq3}\\
&&+g_{1}(X,K_{X}+L,K_{X}+L,L)-h^{1}(\mathcal{O}_{X}) \nonumber
\end{eqnarray}
and
\begin{eqnarray}
g_{2}(X,K_{X}+L,2K_{X}+3L)
&=&g_{2}(X,K_{X}+L,K_{X}+L)+g_{2}(X,K_{X}+L,K_{X}+2L) \label{T-Eq4}\\
&&+g_{1}(X,K_{X}+L,K_{X}+L,K_{X}+2L)-h^{1}(\mathcal{O}_{X}). \nonumber
\end{eqnarray}

By (\ref{T-Eq1}) and (\ref{T-Eq3}), we have

\begin{equation}
g_{2}(X,K_{X}+L,K_{X}+L)+g_{1}(X,K_{X}+L,K_{X}+L,L)
-h^{1}(\mathcal{O}_{X})\geq 0. \label{T-Eq5}
\end{equation}

By (\ref{T-Eq2}) and (\ref{T-Eq4}), we have

\begin{equation}
g_{2}(X,K_{X}+L,K_{X}+L)+g_{1}(X,K_{X}+L,K_{X}+L,K_{X}+2L)-h^{1}(\mathcal{O}_{X})\leq 0.
 \label{T-Eq6}
\end{equation}

Hence by (\ref{T-Eq5}) and (\ref{T-Eq6}) we get 
$$g_{1}(X,K_{X}+L,K_{X}+L,L)-g_{1}(X,K_{X}+L,K_{X}+L,K_{X}+2L)\geq 0.$$
On the other hand since $K_{X}+L$ is nef and $1$-big we have
\begin{eqnarray*}
&&g_{1}(X,K_{X}+L,K_{X}+L,L)-g_{1}(X,K_{X}+L,K_{X}+L,K_{X}+2L) \\
&&=-\frac{1}{2}(K_{X}+L)^{3}(4K_{X}+5L) \\
&&<0.
\end{eqnarray*}
This is a contradiction.
Therefore $h^{0}(4(K_{X}+L))>0$.
\end{proof}
\noindent
\\
(ii) Next we are going to prove the following:

\begin{Claim} $h^{0}(3(K_{X}+L))\neq 0$ or $h^{0}(5(K_{X}+L))\neq 0$.
\end{Claim}
\begin{proof}
Assume that $h^{0}(3(K_{X}+L))=0$ and $h^{0}(5(K_{X}+L))=0$.
Then the following hold:
\begin{eqnarray*}
0 &\geq& h^{0}(3(K_{X}+L))-h^{0}(2(K_{X}+L)),\\
0 &\leq& h^{0}(4(K_{X}+L))-h^{0}(3(K_{X}+L)),\\
0 &\geq& h^{0}(5(K_{X}+L))-h^{0}(4(K_{X}+L)).
\end{eqnarray*}
By (\ref{Eq}), we have
\begin{eqnarray*}
0&\geq& g_{3}(X,K_{X}+L)+g_{2}(X,K_{X}+L,K_{X}+2L)-h^{2}(\mathcal{O}_{X}),\\
0&\leq& g_{3}(X,K_{X}+L)+g_{2}(X,K_{X}+L,2K_{X}+3L)-h^{2}(\mathcal{O}_{X}),\\
0&\geq& g_{3}(X,K_{X}+L)+g_{2}(X,K_{X}+L,3K_{X}+4L)-h^{2}(\mathcal{O}_{X}).
\end{eqnarray*}

Therefore we get
\begin{eqnarray}
g_{2}(X,K_{X}+L,2K_{X}+3L)\geq g_{2}(X,K_{X}+L,K_{X}+2L), \label{A1}\\
g_{2}(X,K_{X}+L,2K_{X}+3L)\geq g_{2}(X,K_{X}+L,3K_{X}+4L). \label{A2}
\end{eqnarray}

On the other hand by Proposition \ref{B18} we have
\begin{eqnarray*}
&&g_{2}(X,K_{X}+L,2K_{X}+3L)\\
&&=g_{2}(X,K_{X}+L,K_{X}+2L)+g_{2}(X,K_{X}+L,K_{X}+L) \\
&&\ \ \ +g_{1}(X,K_{X}+L,K_{X}+L,K_{X}+2L)-h^{1}(\mathcal{O}_{X})
\end{eqnarray*}
and
\begin{eqnarray*}
&&g_{2}(X,K_{X}+L,3K_{X}+4L)\\
&&=g_{2}(X,K_{X}+L,2K_{X}+3L)+g_{2}(X,K_{X}+L,K_{X}+L) \\
&&\ \ \ +g_{1}(X,K_{X}+L,K_{X}+L,2K_{X}+3L)-h^{1}(\mathcal{O}_{X}).
\end{eqnarray*}

Hence
\begin{eqnarray*}
g_{2}(X,K_{X}+L,K_{X}+L)+g_{1}(X,K_{X}+L,K_{X}+L,K_{X}+2L)-h^{1}(\mathcal{O}_{X})&\geq& 0, \\
g_{2}(X,K_{X}+L,K_{X}+L)+g_{1}(X,K_{X}+L,K_{X}+L,2K_{X}+3L)-h^{1}(\mathcal{O}_{X})&\leq& 0,
\end{eqnarray*}
and therefore we get
$$g_{1}(X,K_{X}+L,K_{X}+L,K_{X}+2L)-g_{1}(X,K_{X}+L,K_{X}+L,2K_{X}+3L)\geq 0.$$

But since $K_{X}+L$ is nef and $1$-big and $3K_{X}+(7/2)L$ is ample, we have
\begin{eqnarray*}
&&g_{1}(X,K_{X}+L,K_{X}+L,K_{X}+2L)-g_{1}(X,K_{X}+L,K_{X}+L,2K_{X}+3L) \\
&&=-(K_{X}+L)^{3}\left(3K_{X}+\frac{7}{2}L\right)\\
&&\leq 0.
\end{eqnarray*}
This is a contradiction.
This completes the proof of this claim.
\end{proof}
\noindent
\\
(ii.1) Next we consider the case of $h^{0}(3(K_{X}+L))>0$.\\
If $h^{0}(3(K_{X}+L))>0$, then by using the positivity of $h^{0}(4(K_{X}+L))$,
we have $h^{0}(m(K_{X}+L))>0$ for every integer $m$ with $m\geq 6$ by Lemmas \ref{Lemma B} and \ref{Lemma C}.
\par
\begin{Claim}
If $h^{0}(3(K_{X}+L))>0$, then $h^{0}(5(K_{X}+L))>0$.
\end{Claim}
\begin{proof}
Assume that $h^{0}(5(K_{X}+L))=0$.
If $h^{0}(2(K_{X}+L))>0$, then by Lemma \ref{Lemma B}
we see that $h^{0}(5(K_{X}+L))>0$.
So we may assume that $h^{0}(2(K_{X}+L))=0$.
Then
\begin{eqnarray*}
0 &\geq& h^{0}(2(K_{X}+L))-h^{0}(K_{X}+L),\\
0 &\leq& h^{0}(3(K_{X}+L))-h^{0}(2(K_{X}+L)),\\
0 &\geq& h^{0}(5(K_{X}+L))-h^{0}(4(K_{X}+L)).
\end{eqnarray*}

By (\ref{Eq}), we have
\begin{eqnarray*}
0&\geq& g_{3}(X,K_{X}+L)+g_{2}(X,K_{X}+L,L)-h^{2}(\mathcal{O}_{X}),\\
0&\leq& g_{3}(X,K_{X}+L)+g_{2}(X,K_{X}+L,K_{X}+2L)-h^{2}(\mathcal{O}_{X}),\\
0&\geq& g_{3}(X,K_{X}+L)+g_{2}(X,K_{X}+L,3K_{X}+4L)-h^{2}(\mathcal{O}_{X}).
\end{eqnarray*}

On the other hand by Proposition \ref{B18} we see that
\begin{eqnarray*}
&&g_{2}(X,K_{X}+L,K_{X}+2L)\\
&&=g_{2}(X,K_{X}+L,L)+g_{2}(X,K_{X}+L,K_{X}+L) \\
&&\ \ \ +g_{1}(X,K_{X}+L,K_{X}+L,L)-h^{1}(\mathcal{O}_{X})
\end{eqnarray*}
and
\begin{eqnarray*}
&&g_{2}(X,K_{X}+L,3K_{X}+4L)\\
&&=g_{2}(X,K_{X}+L,K_{X}+2L)+g_{2}(X,K_{X}+L,2K_{X}+2L) \\
&&\ \ \ +g_{1}(X,K_{X}+L,2(K_{X}+L),K_{X}+2L)-h^{1}(\mathcal{O}_{X})\\
&&=g_{2}(X,K_{X}+L,K_{X}+2L)+2g_{2}(X,K_{X}+L,K_{X}+L) \\
&&\ \ \ +g_{1}(X,K_{X}+L,K_{X}+L,K_{X}+L)
+g_{1}(X,K_{X}+L,2(K_{X}+L),K_{X}+2L)\\
&&\ \ \ -2h^{1}(\mathcal{O}_{X}).
\end{eqnarray*}

Hence
\begin{eqnarray*}
0&\leq& g_{2}(X,K_{X}+L,K_{X}+L)+g_{1}(X,K_{X}+L,K_{X}+L,L)-h^{1}(\mathcal{O}_{X}), \\
0&\geq& 2g_{2}(X,K_{X}+L,K_{X}+L)-2h^{1}(\mathcal{O}_{X})\\
&&+g_{1}(X,K_{X}+L,K_{X}+L,K_{X}+L)+g_{1}(X,K_{X}+L,2(K_{X}+L),K_{X}+2L)
\end{eqnarray*}
and therefore we get
\begin{eqnarray*}
&&2g_{1}(X,K_{X}+L,K_{X}+L,L) \\
&&\geq g_{1}(X,K_{X}+L,K_{X}+L,K_{X}+L)+g_{1}(X,K_{X}+L,2(K_{X}+L),K_{X}+2L).
\end{eqnarray*}

But since $K_{X}+L$ is nef and $1$-big and $7K_{X}+(17/2)L$ is ample, we have
\begin{eqnarray*}
&&2g_{1}(X,K_{X}+L,K_{X}+L,L)-g_{1}(X,K_{X}+L,K_{X}+L,K_{X}+L)\\
&&-g_{1}(X,K_{X}+L,2(K_{X}+L),K_{X}+2L) \\
&&=-(K_{X}+L)^{3}\left(7K_{X}+\frac{17}{2}L\right)\\
&&\leq 0.
\end{eqnarray*}
This is a contradiction.
This completes the proof of this claim.
\end{proof}
\noindent
\\
(ii.2) Next we consider the case of $h^{0}(5(K_{X}+L))>0$.

\begin{Claim}\label{CL5}
If $h^{0}(5(K_{X}+L))>0$, then $h^{0}(m(K_{X}+L))>0$ 
for every integer $m$ with $m\geq 5$.
\end{Claim}
\begin{proof}
If $h^{0}(5(K_{X}+L))>0$, then by using the positivity of $h^{0}(4(K_{X}+L))$,
we have $h^{0}(m(K_{X}+L))>0$ for $m=5, 8, 9, 10$ and $m\geq 12$ by Lemmas \ref{Lemma B} and \ref{Lemma C}.
\par
So we consider the case where $m=6$ (resp. $7$, $11$).
\par
Assume that $h^{0}(6(K_{X}+L))=0$ (resp. $h^{0}(7(K_{X}+L))=0$, $h^{0}(11(K_{X}+L))=0$).
If $h^{0}(3(K_{X}+L))>0$, then by Lemma \ref{Lemma B} (resp. Lemma \ref{Lemma B} and Claim \ref{T-CL-1}, Lemma \ref{Lemma B} and Claim \ref{T-CL-1}) we see that $h^{0}(6(K_{X}+L))>0$ (resp. $h^{0}(7(K_{X}+L))>0$, $h^{0}(11(K_{X}+L))>0$).
So we may assume that $h^{0}(3(K_{X}+L))=0$.
Then
\begin{eqnarray*}
0 &\geq& h^{0}(3(K_{X}+L))-h^{0}(2(K_{X}+L)),\\
0 &\leq& h^{0}(4(K_{X}+L))-h^{0}(3(K_{X}+L)),\\
0 &\geq& h^{0}(6(K_{X}+L))-h^{0}(5(K_{X}+L)) \\
(\mbox{resp.}\ 0 &\geq& h^{0}(7(K_{X}+L))-h^{0}(6(K_{X}+L)), \\
0 &\geq& h^{0}(11(K_{X}+L))-h^{0}(10(K_{X}+L))).
\end{eqnarray*}

By (\ref{Eq}), we have
\begin{eqnarray*}
0&\geq& g_{3}(X,K_{X}+L)+g_{2}(X,K_{X}+L,K_{X}+2L)-h^{2}(\mathcal{O}_{X}),\\
0&\leq& g_{3}(X,K_{X}+L)+g_{2}(X,K_{X}+L,2K_{X}+3L)-h^{2}(\mathcal{O}_{X}),\\
0&\geq& g_{3}(X,K_{X}+L)+g_{2}(X,K_{X}+L,4K_{X}+5L)-h^{2}(\mathcal{O}_{X}) \\
(\mbox{resp.}\ 0&\geq& g_{3}(X,K_{X}+L)+g_{2}(X,K_{X}+L,5K_{X}+6L)-h^{2}(\mathcal{O}_{X}), \\
0&\geq& g_{3}(X,K_{X}+L)+g_{2}(X,K_{X}+L,9K_{X}+10L)-h^{2}(\mathcal{O}_{X})).
\end{eqnarray*}

Hence
\begin{eqnarray*}
g_{2}(X,K_{X}+L,2K_{X}+3L)&\geq&g_{2}(X,K_{X}+L,K_{X}+2L), \\
g_{2}(X,K_{X}+L,2K_{X}+3L)&\geq&g_{2}(X,K_{X}+L,4K_{X}+5L) \\
(\mbox{resp.}\ g_{2}(X,K_{X}+L,2K_{X}+3L)&\geq&g_{2}(X,K_{X}+L,5K_{X}+6L) \\
g_{2}(X,K_{X}+L,2K_{X}+3L)&\geq&g_{2}(X,K_{X}+L,9K_{X}+10L)). 
\end{eqnarray*}

On the other hand by Proposition \ref{B18} we have
\begin{eqnarray*}
&&g_{2}(X,K_{X}+L,2K_{X}+3L)\\
&&=g_{2}(X,K_{X}+L,K_{X}+2L)+g_{2}(X,K_{X}+L,K_{X}+L) \\
&&\ \ \ +g_{1}(X,K_{X}+L,K_{X}+L,K_{X}+2L)-h^{1}(\mathcal{O}_{X}),
\end{eqnarray*}
\begin{eqnarray*}
g_{2}(X,K_{X}+L,4K_{X}+5L)
&=&g_{2}(X,K_{X}+L,2K_{X}+3L)+2g_{2}(X,K_{X}+L,K_{X}+L) \\
&&+g_{1}(X,K_{X}+L,K_{X}+L,K_{X}+L)\\
&&+g_{1}(X,K_{X}+L,2K_{X}+3L,2K_{X}+2L)-2h^{1}(\mathcal{O}_{X}),
\end{eqnarray*}

\begin{eqnarray*}
\mbox{(resp.}
g_{2}(X,K_{X}+L,5K_{X}+6L)
&=&g_{2}(X,K_{X}+L,2K_{X}+3L)+3g_{2}(X,K_{X}+L,K_{X}+L) \\
&&+g_{1}(X,K_{X}+L,K_{X}+L,K_{X}+L)\\
&&+g_{1}(X,K_{X}+L,K_{X}+L,2K_{X}+2L)\\
&&+g_{1}(X,K_{X}+L,2K_{X}+3L,3K_{X}+3L)-3h^{1}(\mathcal{O}_{X})
\end{eqnarray*}
and
\begin{eqnarray*}
g_{2}(X,K_{X}+L,9K_{X}+10L)
&=&g_{2}(X,K_{X}+L,2K_{X}+3L)+7g_{2}(X,K_{X}+L,K_{X}+L) \\
&&+\sum_{k=1}^{6}g_{1}(X,K_{X}+L,K_{X}+L,k(K_{X}+L))\\
&&+g_{1}(X,K_{X}+L,2K_{X}+3L,7K_{X}+7L)-7h^{1}(\mathcal{O}_{X})).
\end{eqnarray*}

Hence we have
\begin{eqnarray*}
&&g_{2}(X,K_{X}+L,K_{X}+L)+g_{1}(X,K_{X}+L,K_{X}+L,K_{X}+2L)-h^{1}(\mathcal{O}_{X})\geq 0,
\end{eqnarray*}
\begin{eqnarray*}
&&2g_{2}(X,K_{X}+L,K_{X}+L)+g_{1}(X,K_{X}+L,K_{X}+L,K_{X}+L)\\
&&+g_{1}(X,K_{X}+L,2K_{X}+3L,2K_{X}+2L)-2h^{1}(\mathcal{O}_{X})\leq 0
\end{eqnarray*}
\begin{eqnarray*}
\mbox{(resp.}
&&3g_{2}(X,K_{X}+L,K_{X}+L)+g_{1}(X,K_{X}+L,K_{X}+L,K_{X}+L)\\
&&+g_{1}(X,K_{X}+L,K_{X}+L,2K_{X}+2L)\\
&&+g_{1}(X,K_{X}+L,2K_{X}+3L,3K_{X}+3L)-3h^{1}(\mathcal{O}_{X})\geq 0,
\end{eqnarray*}
\begin{eqnarray*}
&&7g_{2}(X,K_{X}+L,K_{X}+L)+\sum_{k=1}^{6}g_{1}(X,K_{X}+L,K_{X}+L,k(K_{X}+L))\\
&&+g_{1}(X,K_{X}+L,2K_{X}+3L,7K_{X}+7L)-7h^{1}(\mathcal{O}_{X})\leq 0).
\end{eqnarray*}

Therefore
\begin{eqnarray*}
2g_{1}(X,K_{X}+L,K_{X}+L,K_{X}+2L)
&\geq& g_{1}(X,K_{X}+L,K_{X}+L,K_{X}+L)\\
&&+g_{1}(X,K_{X}+L,2K_{X}+3L,2K_{X}+2L) 
\end{eqnarray*}
\begin{eqnarray*}
(\mbox{resp.}\  
3g_{1}(X,K_{X}+L,K_{X}+L,K_{X}+2L)
&\geq& g_{1}(X,K_{X}+L,2K_{X}+3L,3K_{X}+3L)\\
&&+\sum_{k=1}^{2}g_{1}(X,K_{X}+L,K_{X}+L,k(K_{X}+L)), 
\end{eqnarray*}
\begin{eqnarray*}
7g_{1}(X,K_{X}+L,K_{X}+L,K_{X}+2L)
&\geq& g_{1}(X,K_{X}+L,2K_{X}+3L,7K_{X}+7L)\\
&&+\sum_{k=1}^{6}g_{1}(X,K_{X}+L,K_{X}+L,k(K_{X}+L))).
\end{eqnarray*}

On the other hand, since $K_{X}+L$ is nef and big, we have
\begin{eqnarray*}
&&2g_{1}(X,K_{X}+L,K_{X}+L,K_{X}+2L)\\
&&-g_{1}(X,K_{X}+L,K_{X}+L,K_{X}+L)\\
&&-g_{1}(X,K_{X}+L,2K_{X}+3L,2K_{X}+2L) \\
&&=-(K_{X}+L)^{3}\left(10K_{X}+\frac{23}{2}L\right)\\
&&<0
\end{eqnarray*}

\begin{eqnarray*}
(\mbox{resp.}\ &&3g_{1}(X,K_{X}+L,K_{X}+L,K_{X}+2L)\\
&&-g_{1}(X,K_{X}+L,2K_{X}+3L,3K_{X}+3L)\\
&&-\sum_{k=1}^{2}g_{1}(X,K_{X}+L,K_{X}+L,k(K_{X}+L)) \\
&&=-(K_{X}+L)^{3}\left(22K_{X}+25L\right)\\
&&<0,
\end{eqnarray*}
\begin{eqnarray*}
&&7g_{1}(X,K_{X}+L,K_{X}+L,K_{X}+2L)\\
&&-g_{1}(X,K_{X}+L,2K_{X}+3L,7K_{X}+7L)\\
&&-\sum_{k=1}^{6}g_{1}(X,K_{X}+L,K_{X}+L,k(K_{X}+L)) \\
&&=-(K_{X}+L)^{3}\left(140K_{X}+154L\right)\\
&&<0).
\end{eqnarray*}

This is a contradiction.
Therefore we complete the proof of Claim \ref{CL5}.
\end{proof}
\noindent
\par
Therefore we get the assertion of Theorem \ref{Theorem4.1-big}.
\end{proof}

When we study the positivity of $h^{0}(3(K_{X}+L))$, we need to study the value of the second sectional geometric genus.
Here we fix some notation which will be used in the following results.

\begin{Notation}\label{T-NT1}
Assume that $(X,L)$ is a polarized variety of dimension $4$ which satisfies the assumption $({\rm SRE})$.
Then let $r: X_{1}\to X$ be a resolution of $X$ such that $X_{1}\setminus r^{-1}(\mbox{Sing}(X))\cong X\setminus \mbox{Sing}(X)$ and let $L_{1}=r^{*}(L)$.
\end{Notation}

First we will prove the following proposition. 

\begin{Proposition}\label{T-P1}
Let $(X,L)$ be a polarized variety of dimension $4$ which satisfies the assumption $({\rm SRE})$. We use Notation {\rm \ref{T-NT1}}.
Assume that $K_{X}+L$ is nef and big.
Then for any nef line bundles $A_{1}$ and $A_{2}$ on $X$ the following hold.
\begin{itemize}
\item [\rm (i)] 
$c_{2}(X_{1})r^{*}(A_{1})r^{*}(A_{2})\geq -\frac{1}{8}(18K_{X_{1}}L_{1}+27L_{1}^{2})r^{*}(A_{1})r^{*}(A_{2})$.
\item [\rm (ii)]
One of the following holds.
\begin{itemize}
\item [\rm (ii.1)]
$c_{2}(X_{1})r^{*}(A_{1})r^{*}(A_{2})\geq -\frac{1}{3}(6K_{X_{1}}L_{1}+8L_{1}^{2})r^{*}(A_{1})r^{*}(A_{2})$.
\item [\rm (ii.2)]
$X$ is rationally connected and $h^{0}(K_{X}+2L)=h^{0}(K_{X}+L)=0$.
\end{itemize}
\end{itemize}
\end{Proposition}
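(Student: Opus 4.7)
My plan is to rewrite each inequality as the non-negativity of $c_2$ of a $\mathbb{Q}$-twist of $\Omega^1_{X_1}$ and then invoke Miyaoka's generic semi-positivity theorem, with a case analysis in (ii) based on pseudo-effectivity of an adjoint class.

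The key observation is the Chern-class identity
\[
c_2\bigl(\Omega^1_{X_1}\otimes tL_1\bigr)\cdot r^*(A_1)\cdot r^*(A_2)
=\bigl(c_2(X_1)+3t\,K_{X_1}L_1+6t^2\,L_1^2\bigr)\cdot r^*(A_1)\cdot r^*(A_2),
\]
valid for any $t\in\mathbb{Q}_{\geq 0}$ in rational Chern classes. Choosing $t=3/4$ rewrites (i) as $c_2(\Omega^1_{X_1}\otimes \tfrac{3}{4}L_1)\cdot r^*(A_1)\cdot r^*(A_2)\geq 0$, while choosing $t=2/3$ rewrites (ii.1) as $c_2(\Omega^1_{X_1}\otimes \tfrac{2}{3}L_1)\cdot r^*(A_1)\cdot r^*(A_2)\geq 0$. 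By Lemmas~\ref{Lemma D} and~\ref{Lemma E}, these intersection numbers are resolution-independent and the $\mathbb{Q}$-twist is harmless.

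For (i), the determinant of the twisted bundle is $K_{X_1}+3L_1=(K_{X_1}+L_1)+2L_1$, which is big (hence pseudo-effective): $K_{X_1}+L_1=r^*(K_X+L)+E_r$ is the sum of a nef and big class and an effective exceptional divisor, and $2L_1$ is nef. Miyaoka's generic semi-positivity theorem, in its twisted form, then asserts that $\Omega^1_{X_1}\otimes \tfrac{3}{4}L_1$ is generically nef; the non-negativity of the second Chern class of a generically nef bundle against any pair of nef classes is the standard Miyaoka--Fulton--Lazarsfeld-type conclusion, and it yields (i). For (ii), I would run the same mechanism with $t=2/3$. If $K_{X_1}+\tfrac{8}{3}L_1$ is pseudo-effective, (ii.1) follows at once. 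If it is not pseudo-effective, the Boucksom--Demailly--Paun--Peternell theorem produces a dominating family $\{C_\alpha\}$ of irreducible curves on $X_1$ with $(K_{X_1}+\tfrac{8}{3}L_1)\cdot C_\alpha<0$. Since the family covers $X_1$ and $L$ is ample on $X$, we have $L_1\cdot C_\alpha>0$, hence $(K_{X_1}+kL_1)\cdot C_\alpha<0$ for $k=1,2$; the covering property then forces $h^0(r^*(K_X+kL))=0$, so $h^0(K_X+kL)=0$. The inequality $K_{X_1}\cdot C_\alpha<-\tfrac{8}{3}L_1\cdot C_\alpha$ combined with Mori's bend-and-break yields a covering family of rational curves of sufficiently negative canonical degree, and the Kollar--Miyaoka--Mori theorem then implies $X_1$, and hence $X$, is rationally connected, placing us in case (ii.2).

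The main obstacle is the rigorous invocation of Miyaoka's twisted generic semi-positivity in the precise form needed (``pseudo-effective determinant $\Rightarrow$ generic nefness of $\Omega^1_{X_1}\otimes tL_1$ $\Rightarrow$ non-negativity of $c_2$ on nef pairs''), together with the extraction of rational connectedness---as opposed to mere uniruledness---in the non-pseudo-effective case of (ii), which requires running the MRC fibration argument on $X_1$ and verifying that its base must be zero-dimensional. A minor technical point is the standard limit argument to pass from ample to merely nef $A_i$ inside the nef cone.
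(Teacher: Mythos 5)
Your reduction of (i) and (ii.1) to the statement that $c_{2}$ of the $\mathbb{Q}$-twists $\Omega_{X}\left\langle \frac{3}{4}L\right\rangle$ and $\Omega_{X}\left\langle \frac{2}{3}L\right\rangle$ pairs non-negatively with nef classes is exactly the paper's starting point (the paper gets the last step from H\"oring's 2.10 Lemma, using only that $K_{X}+3L$, resp.\ $K_{X}+\frac{8}{3}L$, is \emph{nef}). But the pivot of your argument --- ``pseudo-effective determinant $\Rightarrow$ generic nefness of $\Omega^{1}_{X_{1}}\otimes tL_{1}$'' as a ``twisted form of Miyaoka'' --- is not a theorem, and it is false in general: a sheaf restricted to a general complete-intersection curve can admit a destabilizing quotient of negative degree while its determinant is big (think of $\mathcal{O}(5)\oplus\mathcal{O}(-1)$ on a curve). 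The untwisted equivalence (det pseudo-effective iff generically nef) holds only because Miyaoka plus Boucksom--Demailly--P\u{a}un--Peternell identify both conditions with non-uniruledness; no such identification survives the twist. The correct substitute is H\"oring's Theorem 3.1, which says that if $\Omega_{X}\left\langle tL\right\rangle$ is \emph{not} generically nef there is a fibration $\varphi\colon X'\to Y$ with $\dim Y=m\le 3$ whose general fiber $F$ is rationally connected and satisfies $h^{0}(K_{F}+j(tL)_{F})=0$ for all admissible $j\in[0,4-m]$. The actual content of the paper's proof of (i) is the elimination of every case $m=0,1,2,3$: the cases $m=1,2$ die against effective non-vanishing for $3$-folds and surfaces (using that $\kappa(K_{F}+kL_{F})\ge 0$ because $K_{X}+L$ is nef), the case $m=3$ dies because $F\cong\mathbb{P}^{1}$ would force $\deg(K_{F}+L_{F})\le 0$ while nef-and-bigness of $K_{X}+L$ forces $\deg(K_{F}+L_{F})>0$, and $m=0$ dies against H\"oring's 4.1 Lemma since $(K_{X}+3L)L^{3}>0$. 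None of this appears in your treatment of (i).

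The same gap recurs in (ii), where your dichotomy (pseudo-effective versus not) is not the relevant one; the paper's dichotomy is generically nef versus not, and in the bad branch it must again rule out $m=1,2,3$ before concluding that the base is a point, which is what delivers both the rational connectedness of $X$ and the vanishings $h^{0}(K_{X}+2L)=h^{0}(K_{X}+L)=0$ in (ii.2). Your BDPP covering-family argument, even if completed, yields only uniruledness plus the two vanishings; upgrading to rational connectedness is precisely the elimination of a positive-dimensional MRC base, i.e.\ the case analysis you have deferred. So the proposal identifies the right objects but is missing the theorem that actually drives the proof and the fibration case analysis that constitutes its substance.
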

\begin{proof}
(1) First we assume that $\Omega_{X}\left\langle \frac{3}{4}L\right\rangle$ is not generically nef. (Here $\Omega_{X}\left\langle \frac{3}{4}L\right\rangle$ denotes $\mathbb{Q}$-twisted sheaf. See \cite[2.3 Definition]{Horing09}.)
Then by \cite[3.1 Theorem]{Horing09}, there exist a smooth projective variety $X^{\prime}$ of dimension $4$, a smooth projective variety $Y$ of dimension $m$ with $m\leq 3$,
a birational morphism $\mu : X^{\prime}\to X$, and a surjective morphism $\varphi: X^{\prime}\to Y$ such that the following holds:
The general fiber $F$ of $\varphi$ is rationally connected and $h^{0}(D)=0$ for any Cartier divisor $D$ on $F$ such that $D\sim_{\mathbb{Q}} K_{F}+j\mu^{*}(\frac{3}{4}\mu^{*}(L))_{F}$ for any $j\in [0,4-m]\cap \mathbb{Q}$.
(Here $\sim_{\mathbb{Q}}$ denotes the linear equivalence of $\mathbb{Q}$-divisors.)
\\
(1.0) Assume that $\dim Y=0$.
Then $X^{\prime}$ is rationally connected and $h^{0}(K_{X^{\prime}}+3\mu^{*}(L))=h^{0}(K_{X^{\prime}}+2\mu^{*}(L))=h^{0}(K_{X^{\prime}}+\mu^{*}(L))=0$.
Here we note that $\chi(\mathcal{O}_{X^{\prime}})=1$ since $h^{i}(\mathcal{O}_{X^{\prime}})=0$ for any $i\geq 1$.
But by \cite[4.1 Lemma]{Horing09} this is impossible because 
$$(K_{X^{\prime}}+3\mu^{*}(L))\mu^{*}(L)^{3}=(K_{X}+3L)L^{3}>0.$$
\noindent
\\
(1.1) Assume that $\dim Y=1$.
Then for the general fiber $F$ of $\varphi$ we have $h^{0}(K_{F}+2\mu^{*}(L)|_{F})=h^{0}(K_{F}+\mu^{*}(L)|_{F})=0$.
But $\kappa(K_{F}+2\mu^{*}(L)|_{F})\geq 0$ holds because $K_{X}+L$ is nef.
Hence $h^{0}(K_{F}+2\mu^{*}(L)|_{F})>0$ by \cite[Theorem 4.6]{Fukuma10-2} since $\dim F=3$.
This is impossible.
\\
(1.2) Assume that $\dim Y=2$.
Then for the general fiber $F$ of $\varphi$ we have $h^{0}(K_{F}+\mu^{*}(L)|_{F})=0$.
On the other hand we have $\kappa(K_{F}+\mu^{*}(L)|_{F})\geq 0$ because $K_{X}+L$ is nef.
Hence $h^{0}(K_{F}+\mu^{*}(L)|_{F})>0$ by \cite[Proposition 2.1]{Fukuma10-2} since $\dim F=2$.
This is also impossible.
\\
(1.3) Assume that $\dim Y=3$.
In this case $F\cong \mathbb{P}^{1}$.
If $\deg \mu^{*}(L)|_{F}\geq 3$, then there exists $j\in [0,1]\cap \mathbb{Q}$ such that 
$K_{F}+j\mu^{*}(\frac{3}{4}\mu^{*}(L))_{F}$ is a Cartier divisor with $\deg K_{F}+j\mu^{*}(\frac{3}{4}\mu^{*}(L))_{F}\geq 0$.
Hence $h^{0}(K_{F}+j\mu^{*}(\frac{3}{4}\mu^{*}(L))_{F})>0$ and this is a contradiction.
Therefore $\deg \mu^{*}(L)|_{F}\leq 2$.
In particular 
\begin{equation}
\deg (K_{F}+\mu^{*}(L)|_{F})\leq 0. \label{EQ-Ta}
\end{equation}
On the other hand we have
\begin{eqnarray}
K_{F}+\mu^{*}(L)_{F}
&=&(K_{X^{\prime}}+\mu^{*}(L))_{F} \label{EQ-Tb}\\
&=&(\mu^{*}(K_{X}+L)+E_{\mu})_{F},\nonumber
\end{eqnarray}
where $E_{\mu}$ is an effective $\mu$-exceptional divisor.
Since $K_{X}+L$ is nef and big, we see that $\mu^{*}(K_{X}+L)$ is also nef and big.
Hence $(\mu^{*}(K_{X}+L))_{F}$ is nef and big (see \cite[(1.4) Proposition]{Fujita89}).
So we get 
\begin{equation}
\deg \mu^{*}(K_{X}+L)_{F}>0. \label{EQ-Tc}
\end{equation}
Here we note that we can take a general fiber $F$ of $\varphi$ with $F\not\subset\mbox{Supp}(E_{\mu})$.
Therefore 
\begin{equation}
\deg (E_{\mu})_{F}\geq 0. \label{EQ-Td}
\end{equation}
By (\ref{EQ-Tb}), (\ref{EQ-Tc}) and (\ref{EQ-Td}) we have $\deg (K_{F}+\mu^{*}(L)|_{F})>0$
and this contradicts to (\ref{EQ-Ta}).
\par
By (1.0), (1.1), (1.2) and (1.3) we conclude that $\Omega_{X}\left\langle \frac{3}{4}L\right\rangle$ is generically nef.
Here we note that $K_{X}+3L$ is nef.
Hence by \cite[2.10 Lemma]{Horing09} we get
$$c_{2}\left(\Omega_{X}\left\langle \frac{3}{4}L\right\rangle\right)A_{1}A_{2}\geq 0.$$
Namely we have
$$c_{2}(X)A_{1}A_{2}\geq -\frac{1}{8}(18K_{X}L+27L^{2})A_{1}A_{2}.$$
Here we note that $X$ has only isolated singularities and $\dim X=4$.
Hence we have $c_{2}(X)A_{1}A_{2}=c_{2}(X_{1})r^{*}(A_{1})r^{*}(A_{2})$.
On the other hand, we have $(18K_{X_{1}}L_{1}+27L_{1}^{2})r^{*}(A_{1})r^{*}(A_{2})=(18K_{X}L+27L^{2})A_{1}A_{2}$.
Therefore we get the assertion of (i).
\\
\\
(2.1) First we assume that $\Omega_{X}\left\langle \frac{2}{3}L\right\rangle$ is generically nef.
Here we note that $K_{X}+\frac{8}{3}L$ is nef.
Hence by \cite[2.10 Lemma]{Horing09} we get
$$c_{2}\left(\Omega_{X}\left\langle \frac{2}{3}L\right\rangle\right)A_{1}A_{2}\geq 0.$$
Namely we have
$$c_{2}(X)A_{1}A_{2}\geq -\frac{1}{3}(6K_{X}L+8L^{2})A_{1}A_{2}.$$
Here we note that $X$ has only isolated singularities and $\dim X=4$.
Hence we have $c_{2}(X)A_{1}A_{2}=c_{2}(X_{1})r^{*}(A_{1})r^{*}(A_{2})$.
On the other hand, we have $(6K_{X_{1}}L_{1}+8L_{1}^{2})r^{*}(A_{1})r^{*}(A_{2})=(6K_{X}L+8L^{2})A_{1}A_{2}$.
Therefore we get (ii.1) in the statement of Proposition \ref{T-P1}.
\par
Next we assume that $\Omega_{X}\left\langle \frac{2}{3}L\right\rangle$ is not generically nef.
Then by \cite[3.1 Theorem]{Horing09}, there exist a smooth projective variety $X^{\prime}$ of dimension $4$, a smooth projective variety $Y$ of dimension $m$ with $m\leq 3$,
a birational morphism $\mu : X^{\prime}\to X$, and a surjective morphism $\varphi: X^{\prime}\to Y$ such that the following holds:
The general fiber $F$ of $\varphi$ is rationally connected and $h^{0}(D)=0$ for any Cartier divisor $D$ on $F$ such that $D\sim_{\mathbb{Q}} K_{F}+j\mu^{*}(\frac{2}{3}\mu^{*}(L))_{F}$ for any $j\in [0,4-m]\cap \mathbb{Q}$.
\par
By the same argument as in the cases (1.1), (1.2) and (1.3) above, we can prove $m=0$.
Then $X^{\prime}$ is rationally connected and $h^{0}(K_{X^{\prime}}+2\mu^{*}(L))=h^{0}(K_{X^{\prime}}+\mu^{*}(L))=0$.
Therefore we get (ii.2).
\end{proof} 

\begin{Theorem}\label{T-TH1}
Let $(X,L)$ be a polarized variety of dimension $4$ which satisfies the assumption $({\rm SRE})$.
Assume that $K_{X}+L$ is nef and big.
Then $h^{0}(3(K_{X}+L))>0$.
\end{Theorem}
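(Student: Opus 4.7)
The plan is to argue by contradiction: assume $h^{0}(3(K_{X}+L))=0$. Then Lemma \ref{Lemma B} forces $h^{0}(K_{X}+L)=0$, for otherwise $h^{0}(3(K_{X}+L))\geq 3h^{0}(K_{X}+L)-2\geq 1$. Applying Theorem \ref{I1} with $m=2$ and $m=3$ yields the two identities
$$h^{0}(2(K_{X}+L)) = g_{3}(X,K_{X}+L) + g_{2}(X,K_{X}+L,L) - h^{2}(\mathcal{O}_{X})$$
and
$$-h^{0}(2(K_{X}+L)) = g_{3}(X,K_{X}+L) + g_{2}(X,K_{X}+L,K_{X}+2L) - h^{2}(\mathcal{O}_{X}).$$
Summing these and using Proposition \ref{B18} to expand $g_{2}(X,K_{X}+L,K_{X}+2L)$, I would rewrite the result as the single constraint
$$2g_{3}(X,K_{X}+L) + g_{2}(X,K_{X}+L,K_{X}+L) + 2g_{2}(X,K_{X}+L,L) + g_{1}(X,K_{X}+L,K_{X}+L,L) = 2h^{2}(\mathcal{O}_{X}) + h^{1}(\mathcal{O}_{X}). \quad (\ast)$$

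The next step is to expand each sectional genus on the left of $(\ast)$ via Hirzebruch--Riemann--Roch on the resolution $r:X_{1}\to X$ from Notation \ref{T-NT1}. In particular, $g_{2}(X,K_{X}+L,K_{X}+L)$ will contain a term proportional to $c_{2}(X_{1})\cdot r^{*}(K_{X}+L)^{2}$, while the contribution coming from the $r$-exceptional locus will vanish by Lemmas \ref{Lemma D} and \ref{Lemma E}. The decisive positivity input is then Proposition \ref{T-P1}: bound (i) provides a universal lower bound for $c_{2}(X_{1})\cdot r^{*}(A_{1})\cdot r^{*}(A_{2})$ for any nef $A_{1},A_{2}$, and the stronger bound (ii.1) is available outside of the exceptional case (ii.2).

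Substituting these $c_{2}$-estimates into $(\ast)$ and using that $L$ is ample and $K_{X}+L$ is nef and big, I expect the resulting polynomial inequality in intersection numbers of $K_{X}$ and $L$ to collide with the positivity of $L^{i}(K_{X}+L)^{4-i}$ for $i=0,\dots,4$, yielding a contradiction. This will be the main obstacle: the calculation is delicate because several coefficients in the HRR expansion of the $g_{i}$ carry signs coming from $-K_{X}=c_{1}(X)$, and one must carefully balance the negative contributions against the nef-and-big factors while keeping track of $h^{1}(\mathcal{O}_{X})$ and $h^{2}(\mathcal{O}_{X})$ on the right-hand side of $(\ast)$.

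Finally, in case (ii.2) of Proposition \ref{T-P1} the above strategy breaks down, but then $X$ is rationally connected with $h^{0}(K_{X}+L)=h^{0}(K_{X}+2L)=0$. In particular $h^{i}(\mathcal{O}_{X})=0$ for all $i\geq 1$, and by Kawamata--Viehweg vanishing both $h^{0}(t(K_{X}+L))=\chi(t(K_{X}+L))$ for $t\geq 1$ and $\chi(K_{X}+2L)=0$ hold. Using the three numerical constraints $\chi(\mathcal{O}_{X})=1$, $\chi(K_{X}+L)=0$, $\chi(K_{X}+2L)=0$ on the Hirzebruch--Riemann--Roch polynomial, together with the still-valid bound (i) of Proposition \ref{T-P1} applied to $c_{2}(X_{1})\cdot r^{*}(K_{X}+L)^{2}$ and $c_{2}(X_{1})\cdot r^{*}(K_{X}+L)\cdot L_{1}$, I would conclude $\chi(3(K_{X}+L))>0$ by an analogous but simpler direct calculation, which completes the proof.
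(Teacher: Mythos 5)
Your skeleton (contradiction, Lemma \ref{Lemma B}, the Riemann--Roch difference formula of Theorem \ref{I1}, Proposition \ref{B18}, and the dichotomy of Proposition \ref{T-P1}) is the same as the paper's, but the way you combine the two instances of Theorem \ref{I1} creates a genuine gap. The paper does not sum the two identities; it turns them into the inequalities $0\leq g_{3}(X,K_{X}+L)+g_{2}(X,L,K_{X}+L)-h^{2}(\mathcal{O}_{X})$ and $0\geq g_{3}(X,K_{X}+L)+g_{2}(X,K_{X}+L,K_{X}+2L)-h^{2}(\mathcal{O}_{X})$ and \emph{subtracts} them, so that $g_{3}(X,K_{X}+L)$, $h^{2}(\mathcal{O}_{X})$ and (after Proposition \ref{B18}) also $g_{2}(X,K_{X}+L,L)$ all cancel, leaving only $g_{2}(X,K_{X}+L,K_{X}+L)+g_{1}(X,K_{X}+L,K_{X}+L,L)-h^{1}(\mathcal{O}_{X})\leq 0$. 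Your summed constraint $(\ast)$ instead retains $2g_{3}(X,K_{X}+L)+2g_{2}(X,K_{X}+L,L)-2h^{2}(\mathcal{O}_{X})$, i.e.\ numerically $2(\chi(2(K_{X}+L))-\chi(K_{X}+L))$. If you expand this by Hirzebruch--Riemann--Roch on $X_{1}$ as you propose, you pick up terms such as $\tfrac{1}{12}K_{X_{1}}c_{2}(X_{1})r^{*}(K_{X}+L)$ and $-\tfrac{7}{12}K_{X_{1}}(r^{*}(K_{X}+L))^{3}$, which are not sign-definite and are not controlled by Proposition \ref{T-P1} (that proposition only bounds $c_{2}$ against \emph{nef} classes from below, and $K_{X_{1}}$ is not nef); the only reason this block is harmless is that it equals $2h^{0}(2(K_{X}+L))\geq 0$, a cohomological fact you would have to invoke instead of expanding --- at which point you are back to the paper's subtraction. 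As written, the "decisive positivity" step is not carried out and there is no evidence the brute-force polynomial inequality closes.

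The second gap is in your treatment of case (ii.2). There the weaker bound (i) of Proposition \ref{T-P1} is \emph{not} sufficient by itself: the paper's estimate in this case leaves a residual negative term $-\tfrac{1}{96}(K_{X}+L)^{2}L^{2}$, and the contradiction is only reached by first showing (via \cite[Theorem 3.2 (i)]{Fukuma10-2}, valid because $X$ is rationally connected) that $g_{2}(X,K_{X}+L,K_{X}+L)=h^{0}(3K_{X}+2L)-2h^{0}(2K_{X}+L)$, deducing $h^{0}(2K_{X}+L)>0$, and then using the resulting inequality $(K_{X}+\tfrac{1}{2}L)(K_{X}+L)^{2}L\geq 0$ to absorb that negative term. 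Your proposed replacement --- pure numerics from $\chi(\mathcal{O}_{X})=1$, $\chi(K_{X}+L)=\chi(K_{X}+2L)=0$ together with bound (i) --- omits this effectivity input entirely, and you give no computation showing that those three constraints plus the $c_{2}$ bound force $\chi(3(K_{X}+L))>0$. You should either supply that computation or adopt the paper's use of the effectivity of $2K_{X}+L$.
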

\begin{proof}
Assume that $h^{0}(3(K_{X}+L))=0$.
Then by Lemma \ref{Lemma B}, we get $h^{0}(K_{X}+L)=0$.
Therefore
\begin{eqnarray*}
0&\leq& h^{0}(2(K_{X}+L))-h^{0}(K_{X}+L) \\
0&\geq& h^{0}(3(K_{X}+L))-h^{0}(2(K_{X}+L)).
\end{eqnarray*}
By using (\ref{Eq}) in the proof of Claim \ref{T-CL-1} we have
\begin{eqnarray}
0&\leq& g_{3}(X,K_{X}+L)+g_{2}(X,L,K_{X}+L)-h^{2}(\mathcal{O}_{X})
\label{T-Eq3-1}\\
0&\geq& g_{3}(X,K_{X}+L)+g_{2}(X,K_{X}+L,K_{X}+2L)-h^{2}(\mathcal{O}_{X}).
\label{T-Eq3-2}
\end{eqnarray}
By (\ref{T-Eq3-1}) and (\ref{T-Eq3-2}) we get
$$g_{2}(X,K_{X}+L,K_{X}+2L)\leq g_{2}(X,L,K_{X}+L).$$
On the other hand by Proposition \ref{B18}
\begin{eqnarray*}
g_{2}(X,K_{X}+L,K_{X}+2L)
&=&g_{2}(X,K_{X}+L,K_{X}+L)+g_{2}(X,K_{X}+L,L)\\
&&+g_{1}(X,K_{X}+L,K_{X}+L,L)-h^{1}(\mathcal{O}_{X}).
\end{eqnarray*}

Hence we get 
\begin{equation}
g_{2}(X,K_{X}+L,K_{X}+L)+g_{1}(X,K_{X}+L,K_{X}+L,L)-h^{1}(\mathcal{O}_{X})\leq 0.
\label{T-SG-0}
\end{equation}
\noindent
\\
(1) First we assume that $(X,L)$ satisfies (ii.1) in Proposition \ref{T-P1}.
We use Notation \ref{T-NT1}.
Then by \cite[(2.2.A)]{Fukuma04-4} and \cite[Lemma 3.1]{Fukuma10-2} we have
\begin{eqnarray*}
&&g_{2}(X,K_{X}+L,K_{X}+L) \\
&&=g_{2}(X_{1},r^{*}(K_{X}+L),r^{*}(K_{X}+L))\\
&&=-1+h^{1}(\mathcal{O}_{X_{1}})+\frac{1}{12}(K_{X_{1}}+3r^{*}(K_{X}+L))(K_{X_{1}}+2r^{*}(K_{X}+L))r^{*}(K_{X}+L)^{2}\\
&&\ \ \ +\frac{1}{12}c_{2}(X_{1})r^{*}(K_{X}+L)^{2}+\frac{1}{24}(2K_{X_{1}}+2r^{*}(K_{X}+L))r^{*}(K_{X}+L)^{3} \\
&&\geq -1+h^{1}(\mathcal{O}_{X_{1}})+\frac{1}{12}(K_{X_{1}}+3r^{*}(K_{X}+L))(K_{X_{1}}+2r^{*}(K_{X}+L))r^{*}(K_{X}+L)^{2}\\
&&\ \ \ -\frac{1}{36}(6K_{X_{1}}L_{1}+8L_{1}^{2})r^{*}(K_{X}+L)^{2}+\frac{1}{24}(2K_{X_{1}}+2r^{*}(K_{X}+L))r^{*}(K_{X}+L)^{3} \\
&&=-1+h^{1}(\mathcal{O}_{X_{1}})+\frac{7}{6}(K_{X}+L)^{4}-\frac{5}{6}(K_{X}+L)^{3}L+\frac{1}{36}(K_{X}+L)^{2}L^{2}.
\end{eqnarray*}

On the other hand we have
\begin{equation}
g_{1}(X,K_{X}+L,K_{X}+L,L)=1+\frac{3}{2}(K_{X}+L)^{3}L. \label{SG-1}
\end{equation}
Hence
\begin{eqnarray*}
&&g_{2}(X,K_{X}+L,K_{X}+L)+g_{1}(X,K_{X}+L,K_{X}+L,L)-h^{1}(\mathcal{O}_{X})\\
&&\geq -1+h^{1}(\mathcal{O}_{X_{1}})+\frac{7}{6}(K_{X}+L)^{4}-\frac{5}{6}(K_{X}+L)^{3}L\\
&&\ \ \ +\frac{1}{36}(K_{X}+L)^{2}L^{2}+1+\frac{3}{2}(K_{X}+L)^{3}L-h^{1}(\mathcal{O}_{X_{1}}) \\
&&=\frac{7}{6}(K_{X}+L)^{4}+\frac{2}{3}(K_{X}+L)^{3}L+\frac{1}{36}(K_{X}+L)^{2}L^{2}\\
&&>0.
\end{eqnarray*}

This contradicts (\ref{T-SG-0}).
Therefore this case cannot occur.
\\
\\
(2) Next we assume that $(X,L)$ satisfies (ii.2) in Proposition \ref{T-P1}.
By using Proposition \ref{T-P1} (i), \cite[(2.2.A)]{Fukuma04-4} and \cite[Lemma 3.1]{Fukuma10-2}, we have
\begin{eqnarray*}
&&g_{2}(X,K_{X}+L,K_{X}+L) \\
&&=g_{2}(X_{1},r^{*}(K_{X}+L),r^{*}(K_{X}+L))\\
&&\geq -1+h^{1}(\mathcal{O}_{X_{1}})+\frac{1}{12}(K_{X_{1}}+3r^{*}(K_{X}+L))(K_{X_{1}}+2r^{*}(K_{X}+L))r^{*}(K_{X}+L)^{2}\\
&&\ \ \ -\frac{1}{32}(6K_{X_{1}}L_{1}+9L_{1}^{2})r^{*}(K_{X}+L)^{2}+\frac{1}{24}(2K_{X_{1}}+2r^{*}(K_{X}+L))r^{*}(K_{X}+L)^{3} \\
&&=-1+h^{1}(\mathcal{O}_{X_{1}})+\frac{7}{6}(K_{X}+L)^{4}-\frac{41}{48}(K_{X}+L)^{3}L-\frac{1}{96}(K_{X}+L)^{2}L^{2}.
\end{eqnarray*}

Hence by (\ref{SG-1}) we have
\begin{eqnarray*}
&&g_{2}(X,K_{X}+L,K_{X}+L)+g_{1}(X,K_{X}+L,K_{X}+L,L)-h^{1}(\mathcal{O}_{X})\\
&&\geq \frac{7}{6}(K_{X}+L)^{4}+\frac{31}{48}(K_{X}+L)^{3}L-\frac{1}{96}(K_{X}+L)^{2}L^{2}.
\end{eqnarray*}

Here we note that by \cite[Theorem 3.2 (i)]{Fukuma10-2} we have
\begin{equation}
g_{2}(X,K_{X}+L,K_{X}+L)=h^{0}(3K_{X}+2L)-2h^{0}(2K_{X}+L) \label{SSG-1}
\end{equation} 
because $X$ is rationally connected.
If $g_{2}(X,K_{X}+L,K_{X}+L)\geq 0$, then $g_{2}(X,K_{X}+L,K_{X}+L)+g_{1}(X,K_{X}+L,K_{X}+L,L)-h^{1}(\mathcal{O}_{X})\geq g_{1}(X,K_{X}+L,K_{X}+L,L)>0$ because $h^{1}(\mathcal{O}_{X})=0$ and this contradicts (\ref{T-SG-0}).
Therefore $g_{2}(X,K_{X}+L,K_{X}+L)<0$ 
and we see from (\ref{SSG-1}) that $h^{0}(2K_{X}+L)>0$.
Here we note that
\begin{eqnarray*}
(K_{X}+L)^{3}L&=&(K_{X}+\frac{1}{2}L)(K_{X}+L)^{2}L+\frac{1}{2}(K_{X}+L)^{2}L^{2}.
\end{eqnarray*}

Then
\begin{eqnarray*}
\frac{31}{48}(K_{X}+L)^{3}L
&=&\frac{31}{48}(K_{X}+\frac{1}{2}L)(K_{X}+L)^{2}L+\frac{31}{96}(K_{X}+L)^{2}L^{2}\\
&\geq& \frac{31}{96}(K_{X}+L)^{2}L^{2}.
\end{eqnarray*}

Therefore
\begin{eqnarray*}
&&g_{2}(X,K_{X}+L,K_{X}+L)+g_{1}(X,K_{X}+L,K_{X}+L,L)-h^{1}(\mathcal{O}_{X})\\
&&\geq \frac{7}{6}(K_{X}+L)^{4}+\frac{31}{48}(K_{X}+L)^{3}L-\frac{1}{96}(K_{X}+L)^{2}L^{2}\\
&&\geq \frac{7}{6}(K_{X}+L)^{4}+\frac{5}{16}(K_{X}+L)^{2}L^{2}\\
&&>0.
\end{eqnarray*}

This is also impossible.
\par
Therefore we get $h^{0}(3(K_{X}+L))>0$.
\end{proof}

By Theorems \ref{Theorem4.1-big}, \ref{T-TH1} and Remark \ref{TREM-1} 
we get the following corollary.

\begin{Corollary}\label{Corollary4.1}
$m_{4}^{{\rm NEF}}({\rm SRE})\leq m_{4}^{{\rm NEF}}({\rm SRE})^{+}\leq 3$.
\end{Corollary}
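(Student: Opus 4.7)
The plan is to fix an arbitrary $(X,L)\in \mathcal{P}_{4}^{{\rm NEF}}({\rm SRE})$ and verify that every integer $m\geq 3$ lies in the set defining $\mathcal{M}_{4}^{{\rm NEF}}({\rm SRE})^{+}$. Since $K_{X}+L$ is nef, the result of Shokurov cited in the introduction gives $\kappa(K_{X}+L)\geq 0$, so $\kappa(K_{X}+L)\in\{0,1,2,3,4\}$. I will split on this value and quote the appropriate theorem from earlier in the section.

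If $0\leq \kappa(K_{X}+L)\leq 3$, then Theorem \ref{Theorem4.1} already yields $h^{0}(m(K_{X}+L))>0$ for every $m\geq 2$, which in particular covers every $m\geq 3$. If $\kappa(K_{X}+L)=4$, then $K_{X}+L$ is nef and big, so Theorem \ref{T-TH1} supplies the single value $m=3$, while Theorem \ref{Theorem4.1-big} supplies all $m\geq 4$. Combining the two cases shows that $h^{0}(m(K_{X}+L))>0$ for every $m\geq 3$ and every $(X,L)\in\mathcal{P}_{4}^{{\rm NEF}}({\rm SRE})$, so $3\in \mathcal{M}_{4}^{{\rm NEF}}({\rm SRE})^{+}$ and hence $m_{4}^{{\rm NEF}}({\rm SRE})^{+}\leq 3$.

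The remaining inequality $m_{4}^{{\rm NEF}}({\rm SRE})\leq m_{4}^{{\rm NEF}}({\rm SRE})^{+}$ is exactly the first line of Remark \ref{TREM-1}. There is no real technical obstacle here: all the work has been done in Theorems \ref{Theorem4.1}, \ref{Theorem4.1-big} and \ref{T-TH1}, and the corollary is essentially bookkeeping. The only point to be careful about is that the case $\kappa(K_{X}+L)=4$ is handled by two separate results (one for $m=3$ and one for $m\geq 4$) rather than a single uniform statement, which is why writing ``$m\geq 3$'' requires invoking Theorem \ref{T-TH1} explicitly.
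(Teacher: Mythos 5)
Your proof is correct and takes essentially the same route as the paper, which deduces the corollary in one line from Theorems \ref{Theorem4.1-big} and \ref{T-TH1} together with Remark \ref{TREM-1}. You are in fact slightly more explicit than the paper, which does not cite Theorem \ref{Theorem4.1} for the cases $0\leq \kappa(K_{X}+L)\leq 3$ even though those cases are needed to cover all of $\mathcal{P}_{4}^{{\rm NEF}}({\rm SRE})$; your case split on $\kappa(K_{X}+L)$ supplies exactly that missing bookkeeping.
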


Next we consider the case of $\kappa(X)\geq 0$.

\begin{Theorem}\label{Theorem4.1.5}
Let $(X,L)$ be a polarized variety of dimension $4$.
Assume that $(X,L)$ satisfies the assumption $({\rm SRE})$,
$\kappa(X)\geq 0$ and $K_{X}+L$ is nef.
Then $h^{0}(2(K_{X}+L))\geq 1$.
\end{Theorem}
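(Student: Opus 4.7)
The case $\kappa(K_X+L)\leq 3$ is already covered by Theorem~\ref{Theorem4.1}, so I would reduce immediately to the case in which $K_X+L$ is nef and big. Proposition~\ref{T-P1}(ii) then presents a dichotomy, one branch of which (case (ii.2)) forces $X$ to be rationally connected. Since $\kappa(X)\geq 0$ rules out rational connectedness (rationally connected varieties have Kodaira dimension $-\infty$), case~(ii.1) must hold: for any nef line bundles $A_1,A_2$ on $X$,
$$c_2(X_1)\cdot r^*(A_1)\cdot r^*(A_2)\;\geq\; -\tfrac{1}{3}\bigl(6K_{X_1}L_1+8L_1^{2}\bigr)\cdot r^*(A_1)\cdot r^*(A_2),$$
with $r:X_1\to X$ and $L_1=r^*(L)$ as in Notation~\ref{T-NT1}.

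Next I would argue by contradiction: suppose $h^0(2(K_X+L))=0$. Then Lemma~\ref{Lemma B} forces $h^0(K_X+L)=0$ as well. Specializing to $m=2$ the equation~(Eq) from the proof of Claim~\ref{T-CL-1} (itself a consequence of Theorem~\ref{I1}) gives
$$g_3(X,K_X+L)+g_2(X,K_X+L,L)-h^2(\mathcal{O}_X)=0.$$
A direct computation from Definition~\ref{B2} shows that the $h^i(\mathcal{O}_X)$ contributions cancel, so this equality is equivalent to $\chi_2^H(X,K_X+L,L)=\chi_3^H(X,K_X+L)$. The plan is to show, under the (ii.1) bound, that instead $\chi_2^H(X,K_X+L,L)>\chi_3^H(X,K_X+L)$ strictly, producing the desired contradiction.

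To that end, I would pull back to the resolution $X_1$ and expand both $\chi_2^H$ and $\chi_3^H$ via Hirzebruch--Riemann--Roch in the spirit of the computation in Case~(1) of the proof of Theorem~\ref{T-TH1}. Using $K_{X_1}=r^*(K_X)+E_r$ together with Lemmas~\ref{Lemma D} and~\ref{Lemma E}, the $E_r$-contributions vanish in all intersection numbers that appear, so everything reduces to polynomials in $(K_X+L)$, $L$, and $c_2(X_1)\cdot r^*(\cdot)\cdot r^*(\cdot)$. Substituting $K_X=(K_X+L)-L$ and inserting the (ii.1) lower bound at each such $c_2(X_1)$-term, the target is an inequality of the form
$$\chi_2^H(X,K_X+L,L)-\chi_3^H(X,K_X+L)\;\geq\; \sum_{j=0}^{4}\alpha_j\,(K_X+L)^{j}L^{4-j},$$
with $\alpha_j\geq 0$ and $\alpha_4>0$; the bigness of $K_X+L$ then yields strict positivity and contradicts the equality above.

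The principal obstacle is the intersection-theoretic bookkeeping. Although the overall strategy parallels Case~(1) of Theorem~\ref{T-TH1}, here the \emph{mixed} sectional genus $g_2(X,K_X+L,L)$ appears in place of the symmetric $g_2(X,K_X+L,K_X+L)$, so the cross-terms $K_{X_1}(D^2M+DM^2)$ and $c_2\cdot DM$ arise with coefficients that must be recomputed from scratch, and an additional expansion of $\chi_3^H(X,K_X+L)$ must be folded in. One must then verify that, after all substitutions, every coefficient $\alpha_j$ in the resulting polynomial in $(K_X+L)^jL^{4-j}$ is genuinely nonnegative --- this is where the strength of bound~(ii.1), as opposed to the weaker~(i) of Proposition~\ref{T-P1}, becomes essential.
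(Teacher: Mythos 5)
Your strategy is essentially the paper's, with one genuinely different ingredient. Where the paper bounds the $c_{2}$-term by invoking \cite[Theorem 4.2]{Fukuma10-2} (with $H_{1}=2K_{X}+3L$, $H_{2}=K_{X}+L$, $H=L$, $s=1$), which under $\kappa(X)\geq 0$ yields the rather strong estimate $c_{2}(X_{1})r^{*}(2K_{X}+3L)r^{*}(K_{X}+L)\geq -\frac{1}{8}(6K_{X}L+3L^{2})(2K_{X}+3L)(K_{X}+L)$, you instead use Proposition \ref{T-P1}(ii) and eliminate branch (ii.2) by the correct observation that a rationally connected $X$ has $\kappa(X)=-\infty$. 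That substitution is legitimate and has the virtue of staying inside the paper. Everything else --- the reduction to $\chi_{2}^{H}-\chi_{3}^{H}$ via Theorem \ref{I1}, the passage to the resolution of Notation \ref{T-NT1}, and the use of Lemmas \ref{Lemma D} and \ref{Lemma E} to discard the exceptional contributions --- coincides with the paper's own proof. (The opening case split on $\kappa(K_{X}+L)$ is vacuous: $\kappa(X)\geq 0$ together with $L$ ample already forces $K_{X}+L$ to be big.)

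The endgame as you describe it, however, does not go through. Writing $D=K_{X}+L$ and inserting the bound (ii.1) into the expansion of $\chi_{2}^{H}(X_{1},r^{*}(K_{X}+L),L_{1})-\chi_{3}^{H}(X_{1},r^{*}(K_{X}+L))$ one finds
\begin{equation*}
24\bigl(\chi_{2}^{H}-\chi_{3}^{H}\bigr)\;\geq\;4D^{4}+4D^{3}L-\tfrac{1}{3}D^{2}L^{2}-\tfrac{2}{3}DL^{3},
\end{equation*}
so the coefficients $\alpha_{j}$ are \emph{not} all nonnegative, and bigness of $D$ alone does not yield positivity of the right-hand side (log-concavity of the numbers $D^{j}L^{4-j}$ does not prevent $D^{2}L^{2}$ and $DL^{3}$ from dominating $D^{4}$ and $D^{3}L$). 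The repair is to use $\kappa(X)\geq 0$ once more at this stage, exactly as in the proof of Theorem \ref{Theorem4.2}: since $K_{X}$ is pseudo-effective and $D$, $L$ are nef, $D^{j+1}L^{3-j}-D^{j}L^{4-j}=K_{X}D^{j}L^{3-j}\geq 0$, hence $D^{3}L\geq D^{2}L^{2}\geq DL^{3}$ and the right-hand side is at least $4D^{4}+3D^{3}L>0$. With that correction your argument closes and gives a self-contained alternative to the appeal to \cite[Theorem 4.2]{Fukuma10-2}; without it, the final positivity claim is a genuine gap.
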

\begin{proof}
We use Notation \ref{T-NT1}.
Then $K_{X_{1}}+L_{1}=r^{*}(K_{X}+L)+E_{r}$ holds, where $E_{r}$ is an effective $r$-exceptional divisor.
Hence for any positive integer $m$
\begin{eqnarray*}
h^{0}(m(K_{X_{1}}+L_{1}))
&=&h^{0}(mr^{*}(K_{X}+L)+mE_{r})\\
&=&h^{0}(mr^{*}(K_{X}+L))\\
&=&h^{0}(m(K_{X}+L)).
\end{eqnarray*}

Here we also note that
\begin{eqnarray*}
h^{0}(2K_{X_{1}}+2L_{1})
&=&h^{0}(K_{X_{1}}+K_{X_{1}}+L_{1}+L_{1}) \\
&=&h^{0}(K_{X_{1}}+r^{*}(K_{X}+L)+E_{r}+L_{1}) \\
&\geq &h^{0}(K_{X_{1}}+r^{*}(K_{X}+L)+L_{1}) \\
&=&h^{0}(r^{*}(K_{X})+r^{*}(K_{X}+L)+r^{*}(L)+E_{r}) \\
&\geq &h^{0}(r^{*}(2K_{X}+2L)) \\
&=&h^{0}(2K_{X}+2L).
\end{eqnarray*}

Since $h^{0}(2K_{X_{1}}+2L_{1})=h^{0}(2K_{X}+2L)$, we have
$h^{0}(2K_{X_{1}}+2L_{1})=h^{0}(K_{X_{1}}+r^{*}(K_{X}+L)+L_{1})$.

Then 
\begin{eqnarray}
&&h^{0}(2(K_{X}+L))-h^{0}(K_{X}+L) \label{4.1.5.0}\\
&&=h^{0}(2(K_{X_{1}}+L_{1}))-h^{0}(K_{X_{1}}+L_{1}) \nonumber \\
&&=h^{0}(K_{X_{1}}+r^{*}(K_{X}+L)+L_{1})-h^{0}(K_{X_{1}}+L_{1}) \nonumber \\
&&=g_{3}(X_{1},r^{*}(K_{X}+L))+g_{2}(X_{1},r^{*}(K_{X}+L),L_{1})-h^{2}(\mathcal{O}_{X_{1}})\nonumber\\
&&=-\chi_{3}^{H}(X_{1},r^{*}(K_{X}+L))+\chi_{2}^{H}(X_{1},r^{*}(K_{X}+L),L_{1}).\nonumber
\end{eqnarray}
By using \cite[(2.2.B)]{Fukuma04-4} and \cite[Corollary 2.7]{Fukuma08} we have
\begin{eqnarray*}
&&-\chi_{3}^{H}(X_{1},r^{*}(K_{X}+L))\\
&&=\frac{1}{24}(r^{*}(K_{X}+L))^{4}+\frac{1}{12}K_{X_{1}}(r^{*}(K_{X}+L))^{3}\\
&&\ \ \ +\frac{1}{24}(K_{X_{1}}^{2}+c_{2}(X_{1}))(r^{*}(K_{X}+L))^{2}
+\frac{1}{24}K_{X_{1}}c_{2}(X_{1})(r^{*}(K_{X}+L)),
\end{eqnarray*}
and
\begin{eqnarray*}
&&\chi_{2}^{H}(X_{1},r^{*}(K_{X}+L),L_{1})\\
&&=\frac{1}{6}L_{1}(r^{*}(K_{X}+L))^{3}+\frac{1}{4}(L_{1})^{2}(r^{*}(K_{X}+L))^{2}
+\frac{1}{6}(L_{1})^{3}(r^{*}(K_{X}+L))\\
&&\ \ \ +\frac{1}{4}K_{X_{1}}(r^{*}(K_{X}+L)+L_{1})(r^{*}(K_{X}+L))L_{1}
+\frac{1}{12}(K_{X_{1}}^{2}+c_{2}(X_{1}))(r^{*}(K_{X}+L))L_{1}.
\end{eqnarray*}

Hence
\begin{eqnarray}
&&-{\chi}_{3}^{H}({X_{1}},r^{*}(K_{X}+L))+{\chi}_{2}^{H}({X_{1}},r^{*}(K_{X}+L),L_{1}) \nonumber\\
&&=\frac{1}{24}(r^{*}(K_{X}+L))^{2}
\left\{ (r^{*}(K_{X}+L))^{2}+2K_{X_{1}}r^{*}(K_{X}+L)+K_{X_{1}}^{2}+4L_{1}r^{*}(K_{X}+L)\right.\nonumber\\
&&\ \ \ \left.+6L_{1}^{2}+6K_{X_{1}}L_{1}\right\}+\frac{1}{24}c_{2}(X_{1})(r^{*}(K_{X}+L)^{2}+K_{X_{1}}r^{*}(K_{X}+L)+2L_{1}r^{*}(K_{X}+L))
\nonumber\\
&&\ \ \ +\frac{1}{24}(4L_{1}^{3}r^{*}(K_{X}+L)+6K_{X_{1}}L_{1}^{2}r^{*}(K_{X}+L)+2K_{X_{1}}^{2}L_{1}r^{*}(K_{X}+L))\nonumber\\
&&=\frac{1}{24}(r^{*}(K_{X}+L))^{2}(K_{X_{1}}+r^{*}(K_{X}+L))^{2}
+\frac{5}{12}(r^{*}(K_{X}+L))^{3}L_{1}\nonumber\\
&&\ \ \ +\frac{1}{24}c_{2}(X_{1})r^{*}(K_{X}+L)(r^{*}(K_{X}+L)+K_{X_{1}}+2L_{1})
+\frac{1}{12}r^{*}(K_{X}+L)^{2}r^{*}(K_{X}+2L)L_{1}. \nonumber
\end{eqnarray}

Here we note that by Lemma \ref{Lemma E} we have $c_{2}(X_{1})E_{r}r^{*}(K_{X}+L)=0$.
We also note that by Lemma \ref{Lemma D}
$(r^{*}(K_{X}+L))^{2}(K_{X_{1}}+r^{*}(K_{X}+L))^{2}=(r^{*}(K_{X}+L))^{2}(r^{*}(2K_{X}+L))^{2}$
holds.

Hence
\begin{eqnarray}
&&-\chi_{3}^{H}(X_{1},r^{*}(K_{X}+L))+\chi_{2}^{H}(X_{1},r^{*}(K_{X}+L),L_{1}) \label{4.1.5.1}\\
&&=\frac{1}{24}(r^{*}(K_{X}+L))^{2}(r^{*}(2K_{X}+L))^{2}
+\frac{5}{12}(r^{*}(K_{X}+L))^{3}r^{*}(L) \nonumber\\
&&+\frac{1}{24}c_{2}(X_{1})r^{*}(K_{X}+L)(r^{*}(2K_{X}+3L))
+\frac{1}{12}r^{*}(K_{X}+L)^{2}r^{*}(K_{X}+2L)r^{*}(L). \nonumber
\end{eqnarray}

By setting $H_{1}:=2K_{X}+3L$, $H_{2}:=K_{X}+L$, $H:=L$ and $s=1$,
and by applying \cite[Theorem 4.2]{Fukuma10-2}, we have
\begin{eqnarray}
&&c_{2}(X_{1})r^{*}(2K_{X}+3L)r^{*}(K_{X}+L) \label{4.1.5.2}\\
&&\geq -\frac{3}{4}K_{X_{1}}r^{*}(L)r^{*}(K_{X}+L)r^{*}(2K_{X}+3L)
-\frac{3}{8}(r^{*}(L))^{2}r^{*}(K_{X}+L)r^{*}(2K_{X}+3L).\nonumber\\
&&=-\frac{3}{4}r^{*}(K_{X})r^{*}(L)r^{*}(K_{X}+L)r^{*}(2K_{X}+3L)
-\frac{3}{8}(r^{*}(L))^{2}r^{*}(K_{X}+L)r^{*}(2K_{X}+3L).\nonumber
\end{eqnarray}

By (\ref{4.1.5.0}), (\ref{4.1.5.1}) and (\ref{4.1.5.2}), we get
\begin{eqnarray}
&&h^{0}(2(K_{X}+L))-h^{0}(K_{X}+L) \label{4.1.5.3} \\
&&=-\chi_{3}^{H}(X_{1},r^{*}(K_{X}+L))+\chi_{2}^{H}(X_{1},r^{*}(K_{X}+L),L_{1})\nonumber\\
&&\geq \frac{1}{24}(K_{X}+L)^{2}(2K_{X}+L)^{2}+\frac{5}{12}(K_{X}+L)^{3}L \nonumber\\
&&\ \ \ -\frac{1}{24}\left(\frac{3}{4}K_{X}L(K_{X}+L)(2K_{X}+3L)+\frac{3}{8}L^{2}(K_{X}+L)(2K_{X}+3L)\right)\nonumber\\
&&\ \ \ +\frac{1}{12}(K_{X}+L)^{2}(K_{X}+2L)L\nonumber\\
&&=\frac{1}{192}(K_{X}+L)\left\{
8(K_{X}+L)(2K_{X}+L)^{2}+80(K_{X}+L)^{2}L \right.\nonumber\\
&&\ \ \ \left.-6K_{X}L(2K_{X}+3L)-3L^{2}(2K_{X}+3L)+16(K_{X}+L)(K_{X}+2L)L\right\}\nonumber\\
&&=\frac{1}{192}(K_{X}+L)\left\{
32K_{X}(K_{X}+2L)^{2}+20K_{X}(K_{X}+2L)L+56(K_{X}+L)L^{2}+55L^{3}\right\}\nonumber\\
&&\geq \frac{1}{192}(K_{X}+L)\left\{
56(K_{X}+L)L^{2}+55L^{3}\right\}\nonumber\\
&&\geq \frac{111}{192}. \nonumber
\end{eqnarray}

Therefore we get the assertion.
\end{proof}

\begin{Theorem}\label{Theorem4.2}
Let $(X,L)$ be a polarized variety of dimension $4$.
Assume that $(X,L)$ satisfies the assumption $(\mbox{\rm SRE})$, 
$K_{X}+L$ is nef and $\kappa(X)\geq 0$.
Then for every integer $m$ with $m\geq 2$
$$h^{0}(m(K_{X}+L))\geq \frac{(m-1)(m-2)(m^{2}+3m+6)}{12}+1.$$
\end{Theorem}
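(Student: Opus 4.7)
\emph{Proof plan.} The plan is to proceed by induction on $m\geq 2$, with the base case $m=2$ established in Theorem \ref{Theorem4.1.5} (giving $h^{0}(2(K_{X}+L))\geq 1=P(2)$, where I write $P(m):=\frac{(m-1)(m-2)(m^{2}+3m+6)}{12}+1$). For the inductive step, the goal is to show the successive-difference bound
\[
h^{0}(m(K_{X}+L))-h^{0}((m-1)(K_{X}+L))\ \geq\ P(m)-P(m-1)\ =\ \frac{(m-2)(2m^{2}+m+3)}{6}
\]
for every $m\geq 3$.

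Using Notation \ref{T-NT1}, write $M:=r^{*}(K_{X}+L)$ and $L_{1}:=r^{*}L$. The identity $K_{X_{1}}+(m-1)M+L_{1}=r^{*}(m(K_{X}+L))+E_{r}$, together with the projection formula (since $E_{r}$ is $r$-exceptional and effective), gives $h^{0}(m(K_{X}+L))=h^{0}(K_{X_{1}}+(m-1)M+L_{1})$. Because $(m-1)M+L_{1}$ is nef and big for every $m\geq 1$ (as $L_{1}$ is nef and big on the smooth $X_{1}$), Kawamata--Viehweg vanishing forces $h^{i}=0$ for $i\geq 1$, hence $h^{0}=\chi$. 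Consequently the successive difference equals
\[
\chi(K_{X_{1}}+(m-1)M+L_{1})-\chi(K_{X_{1}}+(m-2)M+L_{1}),
\]
which Hirzebruch--Riemann--Roch expresses as a polynomial in $m$ whose coefficients are intersection numbers involving $M$, $L_{1}$, $K_{X_{1}}$, and $c_{2}(X_{1})$. I would then apply Lemma \ref{Lemma D} to annihilate all $E_{r}$-contributions paired with $r$-pullbacks (so that $M^{a}L_{1}^{b}E_{r}=0$ in the relevant degrees), and Lemma \ref{Lemma E} to kill $c_{2}(X_{1})\cdot E_{r}\cdot r^{*}(A)$ contributions; each coefficient would then reduce to an intersection number on $X$ together with a residual piece of the form $c_{2}(X_{1})\cdot r^{*}(A_{1})r^{*}(A_{2})$.

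To bound these residual pieces from below, apply Proposition \ref{T-P1}. The alternative (ii.2), in which $X$ is rationally connected, is incompatible with the hypothesis $\kappa(X)\geq 0$ (rational connectedness forces $\kappa(X)=-\infty$), so alternative (ii.1) must apply, yielding $c_{2}(X_{1})r^{*}(A_{1})r^{*}(A_{2})\geq -\frac{1}{3}(6K_{X_{1}}L_{1}+8L_{1}^{2})r^{*}(A_{1})r^{*}(A_{2})$ for any nef line bundles $A_{1},A_{2}$. Moreover, $\kappa(X)\geq 0$ forces $K_{X}$ to be $\mathbb{Q}$-linearly equivalent to an effective divisor, so $K_{X}\cdot N_{1}\cdot N_{2}\cdot N_{3}\geq 0$ for any nef classes $N_{i}$ on $X$; such non-negative $K_{X}$-contributions may therefore be dropped when seeking a lower bound. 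Ampleness of $L$ provides $L^{4}\geq 1$ and $L^{k}\cdot N_{1}\cdots N_{4-k}\geq 0$ for nef $N_{i}$.

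The main obstacle is the algebraic verification that, after all these substitutions, the resulting polynomial-in-$m$ lower bound on the successive difference is at least $\frac{(m-2)(2m^{2}+m+3)}{6}$. This requires careful tracking of the coefficient polynomials in $m$ produced by the Hirzebruch--Riemann--Roch expansion and their recombination after the $c_{2}$-substitution, to show that the residual non-negative combinations of $L^{i}(K_{X}+L)^{4-i}$ dominate the target polynomial. In spirit this generalises the concluding numerical inequality (\ref{4.1.5.3}) of Theorem \ref{Theorem4.1.5}, which handled only the constant case $m=2$, to a quadratic-in-$m$ lower bound uniform across $m\geq 3$; the delicate part is that the nefness, rather than bigness, of $K_{X}+L$ means several of the relevant intersection numbers can vanish, and the positivity of $L$ together with $\kappa(X)\geq 0$ must carry the bound throughout.
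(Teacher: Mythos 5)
Your skeleton agrees with the paper's: the base case $m=2$ comes from Theorem \ref{Theorem4.1.5}, and your target difference bound is correct, since $P(m)-P(m-1)=\frac{m(m-1)(2m-1)}{6}-1$ is exactly the successive-difference inequality the paper establishes. However, the entire content of the theorem lies in proving that bound, and you explicitly defer it as ``the main obstacle'' without carrying it out; as it stands the proposal is a plan, not a proof. Moreover, the route you sketch for that step is shakier than you suggest. Proposition \ref{T-P1} assumes $K_{X}+L$ is nef \emph{and big}; you invoke it without checking bigness, and your closing remark that ``the nefness, rather than bigness, of $K_{X}+L$'' causes vanishing intersection numbers shows you believe bigness may fail. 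In fact it cannot: $\kappa(X)\geq 0$ makes some multiple of $K_{X}$ effective, so $(K_{X}+L)^{4}\geq (K_{X}+L)^{3}L\geq\cdots\geq (K_{X}+L)L^{3}\geq L^{4}\geq 1$, whence $K_{X}+L$ is big. You would need to record this both to legitimize Proposition \ref{T-P1} and to get the leading coefficient of your cubic-in-$m$ bound off the ground. A second, more serious difficulty is that a direct Hirzebruch--Riemann--Roch attack on the \emph{first} difference forces you to control its constant-in-$m$ part, which involves $g_{3}(X_{1},r^{*}(K_{X}+L))$, $h^{2}(\mathcal{O}_{X})$ and $c_{1}c_{2}$-type terms; the inequalities you list (Lemmas \ref{Lemma D}, \ref{Lemma E}, Proposition \ref{T-P1} (ii.1), non-negativity of $K_{X}$ against nef classes) do not obviously dominate these, and you give no argument that they do.

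The paper avoids all of this by differencing once more. Setting $F(t):=h^{0}(t(K_{X}+L))-h^{0}((t-1)(K_{X}+L))$, Theorem \ref{I1} and Proposition \ref{B18} give
$$F(t)-F(t-1)=g_{1}(X_{1},(t-3)r^{*}(K_{X}+L)+r^{*}(L),r^{*}(K_{X}+L),r^{*}(K_{X}+L))+g_{2}(X_{1},r^{*}(K_{X}+L),r^{*}(K_{X}+L))-h^{1}(\mathcal{O}_{X_{1}}),$$
so the problematic $g_{3}$ and $h^{2}$ terms cancel. The $g_{1}$ term is the explicit quantity $1+\frac{t(t-3)}{2}(K_{X}+L)^{4}+\frac{t}{2}L(K_{X}+L)^{3}\geq (t-1)^{2}$ by the chain of inequalities above (using that $(K_{X}+L)L^{3}$ is even, hence $\geq 2$), and $g_{2}(X_{1},r^{*}(K_{X}+L),r^{*}(K_{X}+L))\geq h^{1}(\mathcal{O}_{X_{1}})$ is quoted from \cite[Corollary 4.1]{Fukuma10-2}; no $c_{2}$ estimate and no coefficient tracking are needed in this theorem at all (the $c_{2}$ work lives only in Theorem \ref{Theorem4.1.5}, which supplies $F(2)\geq 0$). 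Two discrete summations then yield the stated bound. You should either adopt this second-difference reduction or actually perform the polynomial verification you postponed; without one of these the argument is incomplete.
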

\begin{proof}
We use Notation \ref{T-NT1}.
As in the proof of Theorem \ref{Theorem4.1.5}, we have
$h^{0}(m(K_{X_{1}}+L_{1}))=h^{0}(m(K_{X}+L))$ for any positive inetger $m$.
On the other hand,
\begin{eqnarray*}
h^{0}(m(K_{X_{1}}+L_{1}))
&=&h^{0}(K_{X_{1}}+(m-1)(K_{X_{1}}+L_{1})+L_{1})\\
&=&h^{0}(K_{X_{1}}+(m-1)r^{*}(K_{X}+L)+r^{*}(L))\\
&=&h^{0}(K_{X_{1}}+(m-2)r^{*}(K_{X})+(m-1)r^{*}(L)+r^{*}(K_{X}+L)),\\
h^{0}((m-1)(K_{X_{1}}+L_{1}))
&=&h^{0}(K_{X_{1}}+(m-2)r^{*}(K_{X})+(m-1)r^{*}(L)).
\end{eqnarray*}

Let
$F(t):=h^{0}(t(K_{X}+L))-h^{0}((t-1)(K_{X}+L))$.
Then by Theorem \ref{I1} we have
\begin{eqnarray*}
F(t)&=&
h^{0}(K_{X_{1}}+(t-2)r^{*}(K_{X})+(t-1)r^{*}(L)+r^{*}(K_{X}+L))\\
&&-h^{0}(K_{X_{1}}+(t-2)r^{*}(K_{X})+(t-1)r^{*}(L))\\
&=&
g_{3}(X_{1},r^{*}(K_{X}+L))+g_{2}(X_{1},(t-2)r^{*}(K_{X}+L)+r^{*}(L),r^{*}(K_{X}+L))
-h^{2}(\mathcal{O}_{X}).
\end{eqnarray*}
Hence we have
\begin{eqnarray*}
F(t)-F(t-1)
&=&g_{2}(X_{1},(t-2)r^{*}(K_{X}+L)+r^{*}(L),r^{*}(K_{X}+L))\\
&&-g_{2}(X_{1},(t-3)r^{*}(K_{X}+L)+r^{*}(L),r^{*}(K_{X}+L)).
\end{eqnarray*}
On the other hand by Proposition \ref{B18}
\begin{eqnarray*}
&&g_{2}(X_{1},(t-2)r^{*}(K_{X}+L)+r^{*}(L),r^{*}(K_{X}+L)) \\
&&=g_{2}(X_{1},(t-3)r^{*}(K_{X}+L)+r^{*}(L),r^{*}(K_{X}+L))+g_{2}(X_{1},r^{*}(K_{X}+L),r^{*}(K_{X}+L))\\
&&\ \ \ +g_{1}(X_{1},(t-3)r^{*}(K_{X}+L)+r^{*}(L),r^{*}(K_{X}+L),r^{*}(K_{X}+L))-h^{1}(\mathcal{O}_{X_{1}}).
\end{eqnarray*}

Hence
\begin{eqnarray*}
F(t)-F(t-1)
&=&g_{1}(X_{1},(t-3)r^{*}(K_{X}+L)+r^{*}(L),r^{*}(K_{X}+L),r^{*}(K_{X}+L))\\
&&+g_{2}(X_{1},r^{*}(K_{X}+L),r^{*}(K_{X}+L))-h^{1}(\mathcal{O}_{X_{1}}).
\end{eqnarray*}
Here we note that $(K_{X}+L)L^{3}\geq 2$ because $(K_{X}+L)L^{3}$ is positive and even.
Moreover since
$$(K_{X}+L)^{4}\geq (K_{X}+L)^{3}L\geq (K_{X}+L)^{2}L^{2}\geq (K_{X}+L)L^{3}
\geq 2,$$
we have
\begin{eqnarray*}
&&g_{1}(X_{1},(t-3)r^{*}(K_{X}+L)+r^{*}(L),r^{*}(K_{X}+L),r^{*}(K_{X}+L)) \\
&&=1+\frac{t}{2}((t-3)K_{X}+(t-2)L)(K_{X}+L)^{3} \\
&&=1+\frac{t(t-3)}{2}(K_{X}+L)^{4}+\frac{t}{2}L(K_{X}+L)^{3} \\
&&\geq 1+t(t-3)+t\\
&&=(t-1)^{2}.
\end{eqnarray*}
By \cite[Corollary 4.1]{Fukuma10-2} we have $g_{2}(X_{1},r^{*}(K_{X}+L),r^{*}(K_{X}+L))\geq h^{1}(\mathcal{O}_{X_{1}})$.
Hence
$$F(t)-F(t-1)\geq (t-1)^{2}$$
and
\begin{eqnarray*}
&&h^{0}(k(K_{X}+L))-h^{0}((k-1)(K_{X}+L))\\
&&=F(k) \\
&&\geq (k-1)^{2}+\cdots+2^{2}+F(2)\\
&&=\frac{k(k-1)(2k-1)}{6}-1+F(2).
\end{eqnarray*}

\begin{Claim}\label{Claim4.4}
$h^{0}(2(K_{X}+L))-h^{0}(K_{X}+L)\geq 0$.
\end{Claim}
\begin{proof}
If $h^{0}(K_{X}+L)\geq 1$, then by Lemma \ref{Lemma B} 
we get $h^{0}(2(K_{X}+L))-h^{0}(K_{X}+L)\geq h^{0}(K_{X}+L)-1\geq 0$.
Hence $h^{0}(2(K_{X}+L))-h^{0}(K_{X}+L)\geq 0$.
If $h^{0}(K_{X}+L)=0$, then by Theorem \ref{Theorem4.1.5} we get 
$h^{0}(2(K_{X}+L))-h^{0}(K_{X}+L)\geq 1$.
So we get the assertion.
\end{proof}
\noindent
\par
By Claim \ref{Claim4.4}, $F(2)\geq 0$.
Hence
$$h^{0}(k(K_{X}+L))-h^{0}((k-1)(K_{X}+L))\geq \frac{k(k-1)(2k-1)}{6}-1.$$
Therefore
\begin{eqnarray*}
h^{0}(m(K_{X}+L))
&\geq& h^{0}(2(K_{X}+L))+\sum_{k=3}^{m}\left\{\frac{k(k-1)(2k-1)}{6}-1\right\} \\
&=&h^{0}(2(K_{X}+L))+\frac{1}{12}m^{2}(m^{2}-1)-(m-1)\\
&\geq&1+\frac{1}{12}m^{2}(m^{2}-1)-(m-1)\\
&=&\frac{(m-1)(m-2)(m^{2}+3m+6)}{12}+1.
\end{eqnarray*}
We get the assertion of Theorem \ref{Theorem4.2}.
\end{proof}

\section{The case of \textrm{\boldmath $\kappa(K_{X}+L)\geq 0$}}

In this section, we consider the case of $\kappa(K_{X}+L)\geq 0$ in general.
First we review the adjunction theory of Beltrametti-Sommese and Fujita, which will be used later.

\begin{Theorem}\label{TH1}
Let $(X,\mathcal{L})$ be a polarized manifold with $\dim X=n\geq 3$.
Then $(X,\mathcal{L})$ is one of the following types.
\begin{itemize}
\item [\rm (1)]
$(\mathbb{P}^{n}, \mathcal{O}_{\mathbb{P}^{n}}(1))$.
\item [\rm (2)]
$(\mathbb{Q}^{n}, \mathcal{O}_{\mathbb{Q}^{n}}(1))$.
\item [\rm (3)]
A scroll over a smooth projective curve.
\item [\rm (4)]
A Del Pezzo manifold.
\item [\rm (5)]
A quadric fibration over a smooth curve.
\item [\rm (6)]
A scroll over a smooth projective surface.
\item [\rm (7)]
Let $(M, \mathcal{A})$ be a reduction of $(X,\mathcal{L})$.
\begin{itemize}
\item [{\rm (7.1)}] $n=4$, $(M, \mathcal{A})=(\mathbb{P}^{4}, \mathcal{O}_{\mathbb{P}^{4}}(2))$.
\item [{\rm (7.2)}]
$n=3$, $(M, \mathcal{A})=(\mathbb{Q}^{3}, \mathcal{O}_{\mathbb{Q}^{3}}(2))$.
\item [{\rm (7.3)}]
$n=3$, $(M, \mathcal{A})=(\mathbb{P}^{3}, \mathcal{O}_{\mathbb{P}^{3}}(3))$.
\item [{\rm (7.4)}]
$n=3$, $M$ is a $\mathbb{P}^{2}$-bundle over a smooth curve $C$ and 
for any fiber $F^{\prime}$ of it, $(F^{\prime}, \mathcal{A}|_{F^{\prime}})\cong(\mathbb{P}^{2}, \mathcal{O}_{\mathbb{P}^{2}}(2))$.
\item [\rm (7.5)]
$K_{M}\sim -(n-2)\mathcal{A}$, that is, 
$(M,\mathcal{A})$ is a Mukai manifold.
\item [\rm (7.6)]
$(M, \mathcal{A})$ is a Del Pezzo fibration over a smooth curve.
\item [\rm (7.7)]
$(M, \mathcal{A})$ is a quadric fibration over a normal surface.
\item [\rm (7.8)]
$n\geq 4$ and $(M,\mathcal{A})$ is a scroll over a normal projective variety of dimension $3$.
\item [\rm (7.9)] $K_{M}+(n-2)\mathcal{A}$ is nef and big.
\end{itemize}
\end{itemize}
\end{Theorem}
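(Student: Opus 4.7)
The plan is to apply the adjunction-theoretic reduction of Beltrametti--Sommese--Fujita in two stages, classifying $(X,\mathcal L)$ according to whether the successive adjoint bundles $K_X+(n-1)\mathcal L$ and $K_M+(n-2)\mathcal A$ are nef, and whether they are big.

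First I would study $K_X+(n-1)\mathcal L$. If this bundle fails to be nef, then by the cone theorem there exists an extremal ray $R$ with $(K_X+(n-1)\mathcal L)\cdot R<0$. The associated Mori contraction $\varphi_R\colon X\to Y$, combined with the numerical constraint coming from $\mathcal L\cdot R>0$ and Ionescu--Fujita's classification of such contractions in the presence of an ample divisor, forces $(X,\mathcal L)$ into exactly one of the cases (1)--(6): $(\mathbb P^n,\mathcal O(1))$, $(\mathbb Q^n,\mathcal O(1))$, a scroll over a smooth curve, a Del Pezzo manifold, a quadric fibration over a smooth curve, or a scroll over a smooth surface (here Remark \ref{RM3} provides the projective-bundle description for the scroll cases). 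This is exactly the content of Fujita's classification of the first reduction, so I would simply invoke it.

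If instead $K_X+(n-1)\mathcal L$ is nef, then by the base-point-free theorem it is semiample, and by Sommese's theorem on the structure of the associated morphism there exists a birational map $\mu\colon X\to M$ onto a polarized manifold $(M,\mathcal A)$ with $K_M+(n-1)\mathcal A$ ample, such that $\mu$ is a finite composite of simple blow-ups in the sense of Definition \ref{DF3} and $\mathcal A$ is recovered from $\mathcal L$ by pushing down. I would then replay the same analysis on $(M,\mathcal A)$ with the \emph{second} adjoint bundle $K_M+(n-2)\mathcal A$. When this bundle is not nef, another extremal contraction argument, now refined by the work of Beltrametti--Sommese--Wiśniewski (\cite{BeSoWi}), produces the sporadic low-dimensional exceptions (7.1)--(7.4), the Mukai case (7.5), the Del Pezzo fibration (7.6), the quadric fibration over a normal surface (7.7), and the scroll over a threefold (7.8). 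The remaining case is that $K_M+(n-2)\mathcal A$ is nef; since it is automatically big (as it differs from the nef and big bundle $K_M+(n-1)\mathcal A$ by an ample summand of a reduction), this is exactly (7.9).

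The heart of the argument (and the main obstacle, were one to reproduce it from scratch) is the case analysis at the second adjunction step: one must use the detailed numerical classification of Beltrametti--Sommese--Wiśniewski together with Sommese's results on the contractibility and smoothness of the first reduction to rule out all configurations except those listed in (7.1)--(7.9), and in particular to control when the contraction of an extremal ray on $M$ fails to be equidimensional. Since this classification is established in \cite{BeSo-Book} (and the cases involving $n=3,4$ rely on the explicit results of \cite{BeSoWi}), my proof would consist of organizing the dichotomies above and citing these sources for each branch rather than redoing the argument.
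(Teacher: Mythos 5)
Your overall strategy --- organize the classification by the two-stage adjunction process and cite Beltrametti--Sommese and Fujita for each branch --- is exactly what the paper does: its entire proof is the citation to \cite[Proposition 7.2.2, Theorem 7.2.4, Theorem 7.3.2, Theorem 7.3.4, Theorem 7.5.3]{BeSo-Book} and \cite[Chapter II, (11.2), (11.7), (11.8)]{Fujita-Book}. However, the way you organize the dichotomies misstates where the nef/big distinctions actually fall, and as written your case analysis does not correctly partition the list (1)--(7.9).

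Concretely: at the first stage, the failure of nefness of $K_{X}+(n-1)\mathcal{L}$ accounts only for cases (1)--(3). Cases (4)--(6) are precisely the cases where $K_{X}+(n-1)\mathcal{L}$ \emph{is} nef but fails to be big: for a Del Pezzo manifold $K_{X}+(n-1)\mathcal{L}\sim\mathcal{O}_{X}$, and for a quadric fibration over a curve or a scroll over a surface it is a pullback $f^{*}\mathcal{A}$ of an ample bundle from the base, hence nef but not big. Consequently your next step --- ``if $K_{X}+(n-1)\mathcal{L}$ is nef, then by the base-point-free theorem it is semiample, and the associated morphism is a birational reduction $\mu\colon X\to M$'' --- is false without the bigness hypothesis: in cases (4)--(6) the morphism defined by $|m(K_{X}+(n-1)\mathcal{L})|$ has positive-dimensional fibers and is not birational. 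The first reduction $(M,\mathcal{A})$ exists only when $K_{X}+(n-1)\mathcal{L}$ is nef \emph{and} big. The same conflation recurs at the second stage: the non-nef locus of $K_{M}+(n-2)\mathcal{A}$ yields only the sporadic cases (7.1)--(7.4), whereas (7.5)--(7.8) are exactly the cases where $K_{M}+(n-2)\mathcal{A}$ is nef but not big (a Mukai manifold has $K_{M}+(n-2)\mathcal{A}\sim\mathcal{O}_{M}$, and the fibrations in (7.6)--(7.8) have $K_{M}+(n-2)\mathcal{A}=f^{*}(\mbox{ample})$ by Definition \ref{DF4}). Your justification that nefness of $K_{M}+(n-2)\mathcal{A}$ forces bigness is also backwards: $K_{M}+(n-2)\mathcal{A}$ is obtained from the nef and big bundle $K_{M}+(n-1)\mathcal{A}$ by \emph{subtracting} the ample bundle $\mathcal{A}$, which does not preserve bigness (again, Mukai manifolds are a counterexample). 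The correct trichotomy at each stage is: not nef; nef but not big; nef and big --- and only the last branch at each stage permits passing to the next reduction. With that repair, your argument coincides with the paper's appeal to the standard adjunction-theoretic classification.
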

\begin{proof}
See \cite[Proposition 7.2.2, Theorem 7.2.4, Theorem 7.3.2, Theorem 7.3.4, 
and Theorem 7.5.3]{BeSo-Book}. 
See also \cite[Chapter II, (11.2), (11.7), and (11.8)]{Fujita-Book}. 
\end{proof}

\begin{Remark}\label{RM2}
Let $(X,\mathcal{L})$ be a polarized manifold with $\dim X=n\geq 3$.
\begin{itemize}
\item [(1)]
$\kappa(K_{X}+(n-2)\mathcal{L})=-\infty$ if and only if $(X,\mathcal{L})$ is one of the types from (1) to (7.4) in Theorem \ref{TH1}.
\item [(2)]
$\kappa(K_{X}+(n-2)\mathcal{L})=0$ if and only if $(X,\mathcal{L})$ is (7.5) in Theorem \ref{TH1}.
\item [(3)]
$\kappa(K_{X}+(n-2)\mathcal{L})\geq 1$ if and only if $(X,\mathcal{L})$ is one of the types from (7.6) to (7.9) in Theorem \ref{TH1}.
\end{itemize}
\end{Remark}

\begin{Definition}\label{DF5}
Let $(X,\mathcal{L})$ be a polarized manifold of dimension $n\geq 3$, and let $(M,\mathcal{A})$ be a reduction of $(X,\mathcal{L})$.
Assume that $K_{M}+(n-2)\mathcal{A}$ is nef and big.
Then for large $m\gg 0$ the morphism $\varphi: M\to W$ associated to $|m(K_{M}+(n-2)\mathcal{A})|$ has connected fibers and normal image $W$.
Then we note that there exists an ample line bundle $\mathcal{K}$ on $W$ such that $K_{M}+(n-2)\mathcal{A}=\varphi^{*}(\mathcal{K})$.
Let $\mathcal{D}:=(\varphi_{*}\mathcal{A})^{\vee\vee}$, where $^{\vee\vee}$ denotes the double dual.
Then the pair $(W,\mathcal{D})$ together with $\varphi$ is called the {\it second reduction of $(X,\mathcal{L})$}. 
\end{Definition}

\begin{Remark}\label{RM6}
(1) If $K_{M}+(n-2)\mathcal{A}$ is nef and big but not ample, then $\varphi$ is equal to the nef value morphism of $\mathcal{A}$.
\\
(2) If $K_{M}+(n-2)\mathcal{A}$ is ample, then $\varphi$ is an isomorphism.
\\
(3) If $n\geq 4$, then $W$ has isolated terminal singularities and is $2$-factorial.
Moreover if $n$ is even, then $X$ is Gorenstein (see \cite[Proposition 7.5.6]{BeSo-Book}).
\end{Remark}

Here we consider a characterization of $(X,\mathcal{L})$ with $\kappa(K_{X}+(n-3)\mathcal{L})=-\infty$.
We note that $\kappa(K_{X}+(n-1)\mathcal{L})=-\infty$ (resp. $\kappa(K_{X}+(n-2)\mathcal{L})=-\infty$) if and only if $(X,\mathcal{L})$ is one of the types from (1) to (3) (resp. from (1) to (7.4)).
Here we consider the case where $\kappa(K_{X}+(n-3)\mathcal{L})=-\infty$.
If $(X,\mathcal{L})$ is one of the types from (1) to (7.8), then $\kappa(K_{X}+(n-3)\mathcal{L})=-\infty$ holds. 
So we assume that $K_{M}+(n-2)\mathcal{A}$ is nef and big.
Then there exist a normal projective variety $W$ with only 2-factorial isolated terminal singularities, a birational morphism $\phi_{2}: M\to W$ and an ample line bundle $\mathcal{K}$ on $W$ such that $K_{M}+(n-2)\mathcal{A}=(\phi_{2})^{*}(\mathcal{K})$.
Let $\mathcal{D}:=(\phi_{2})_{*}(\mathcal{A})^{\vee\vee}$.
Then $\mathcal{D}$ is a 2-Cartier divisor on $W$ and $\mathcal{K}=K_{W}+(n-2)\mathcal{D}$ (see \cite[Lemma 7.5.8]{BeSo-Book}).
Then the pair $(W,\mathcal{D})$ is the second reduction of $(X,\mathcal{L})$
(see Definition \ref{DF5}).
Here we remark that if $K_{M}+(n-2)\mathcal{A}$ is ample, then $(W,\mathcal{K})\cong (M, K_{M}+(n-2)\mathcal{A})$.
\par
Then the following properties hold:
\begin{itemize}
\item [(1)] $\kappa(K_{X}+(n-3)\mathcal{L})=\kappa(K_{W}+(n-3)\mathcal{K})$ holds \cite[Corollary 7.6.2]{BeSo-Book}.
\item [(2)] $(n-2)(K_{W}+(n-3)\mathcal{D})=K_{W}+(n-3)\mathcal{K}$ and $K_{M}+(n-3)\mathcal{A}=\phi_{2}^{*}(K_{W}+(n-3)\mathcal{D})+\Delta$ for an exceptional effective $\mathbb{Q}$-Cartier divisor $\Delta$ of $\phi_{2}$.
Therefore
\begin{eqnarray*}
m(n-2)(K_{X}+(n-3)\mathcal{L})
&=&m(n-2)\phi_{1}^{*}(K_{M}+(n-3)\mathcal{A})+E_{1} \\
&=&m(n-2)\phi_{1}^{*}(\phi_{2}^{*}(K_{W}+(n-3)\mathcal{D}))+E_{1}+m(n-2)\phi_{1}^{*}\Delta\\
&=&m\phi_{1}^{*}\circ \phi_{2}^{*}(K_{W}+(n-3)\mathcal{K})+E_{1}+m(n-2)\phi_{1}^{*}\Delta.
\end{eqnarray*}
(Here $\phi_{1}:X\to M$ is a reduction of $(X,\mathcal{L})$ and $E_{1}$ is a $\phi_{1}$-exceptional effective divisor.)
\item [(3)] $h^{0}((n-2)m(K_{X}+(n-3)\mathcal{L}))=h^{0}(m(K_{W}+(n-3)\mathcal{K}))$ for every integer $m$ with $m\geq 1$.
\end{itemize}

Moreover if $n\geq 4$, then there exists a normal factorial projective variety $M^{\sharp}$ with only isolated terminal singularities and birational morphisms $\phi_{2}^{\sharp}: M\to M^{\sharp}$ and $\psi: M^{\sharp}\to W$ such that $\phi_{2}=\psi\circ\phi_{2}^{\sharp}$.
Then $M^{\sharp}$ is called the {\it factorial stage} (see \cite[7.5.7 Definition-Notation]{BeSo-Book} or \cite[(2.4) Theorem]{Fujita92}).
\par
Here we consider a classification of $(X,\mathcal{L})$ with $\kappa(K_{X}+(n-3)\mathcal{L})=-\infty$ and $n\geq 4$.
First we note that if $(X,\mathcal{L})$ is one of the types from (1) to (7.8), then we see that $\kappa(K_{X}+(n-3)\mathcal{L})=-\infty$.
So we may assume that $K_{M}+(n-2)\mathcal{A}$ is nef and big.
Then there exists the second reduction $(W,\mathcal{D})$ of $X$.
Here we use notation in Definition \ref{DF5}.
If $\tau(\mathcal{K})\leq n-3$, then by above we see that $\kappa(K_{X}+(n-3)\mathcal{L})\geq 0$. (Here $\tau(\mathcal{K})$ denotes the nef value of $\mathcal{K}$.)
So we may assume that $\tau(\mathcal{K})>n-3$.
\par
Here we consider the case of $n=4$.
In this case $M^{\sharp}$ and $W$ are Gorenstein (see \cite[Proposition 7.5.6 and 7.5.7 Definition-Notation]{BeSo-Book}).
Then by the proof of \cite[Section 4]{Fujita92} we see that $(W,\mathcal{K})$ or $M^{\sharp}$ is one of the types in \cite[(4.$\infty$)]{Fujita92}.
If $(W,\mathcal{K})$ or $M^{\sharp}$ is either (4.2), (4.4.0), (4.4.1), (4.4.2), (4.6.0.0), (4.6.0.1.0), (4.6.0.2.1), (4.6.1), (4.7) or (4.8.0) in \cite[(4.$\infty$)]{Fujita92}, then we see that $\kappa(K_{X}+\mathcal{L})=-\infty$.\par
Assume that $(W,\mathcal{K})$ is the type (4.4.4) in \cite[(4.$\infty$)]{Fujita92}. Then we note that $\tau(\mathcal{K})=3$ and there exist a normal Gorenstein projective variety $W_{2}$, an ample line bundle $\mathcal{K}_{2}$ on $W_{2}$ and a birational morphism $\mu: W\to W_{2}$ such that $\mu$ is the simultaneous contraction to distinct smooth points of divisors $E_{i}\cong \mathbb{P}^{3}$ such that $E_{i}\subset \mbox{reg}(W)$, $E_{i}|_{E_{i}}\cong\mathcal{O}_{\mathbb{P}^{3}}(-1)$, $K_{W}+3\mathcal{K}=\mu^{*}(K_{W_{2}}+3\mathcal{K}_{2})$ and $K_{W_{2}}+3\mathcal{K}_{2}$ is ample, that is, $\tau(\mathcal{K}_{2})<3$.
Moreover we infer that $W_{2}$ has the same singularities as $W$ by above.
Since $E_{i}\subset \mbox{reg}(W)$, we have $\psi^{-1}(E_{i})\cong E_{i}$ by the definition of $\psi$.
Hence there exist a normal Gorenstein projective variety $W_{2}^{\sharp}$ and birational morphisms $\mu^{\sharp}: M^{\sharp}\to W_{2}^{\sharp}$ and $\psi^{\sharp}:W_{2}^{\sharp}\to W_{2}$ such that $\mu\circ\psi=\psi^{\sharp}\circ\mu^{\sharp}$.
We note that $\mu^{\sharp}: M^{\sharp}\to W_{2}^{\sharp}$ is the contraction of $\psi^{-1}(E_{i})$ and $W_{2}^{\sharp}$ has the same singularities as $M^{\sharp}$.
$(W_{2},\mathcal{K}_{2})$ is a reduction of $(W,\mathcal{K})$ and is called the {\it $2\frac{1}{2}$ reduction of $(W,\mathcal{K})$} in \cite[(2.2) Theorem-Definition]{BeSo93}.
We also note that $h^{j}(\mathcal{O}_{X})=h^{j}(\mathcal{O}_{M})=h^{j}(\mathcal{O}_{W})=h^{j}(\mathcal{O}_{W_{2}})=h^{j}(\mathcal{O}_{M^{\sharp}})=h^{j}(\mathcal{O}_{W^{\sharp}})$.
For this $\psi^{\sharp}:W_{2}^{\sharp}\to W_{2}$ and $(W_{2},\mathcal{K}_{2})$, we can apply the same argument as in \cite[Section 4]{Fujita92}.
If $\tau(\mathcal{K}_{2})\leq 1$, then we can prove that $\kappa(K_{X}+\mathcal{L})\geq 0$.
So we assume that $\tau(\mathcal{K}_{2})>1$.
Then $(W_{2},\mathcal{K}_{2})$ is either (4.6.0.0), (4.6.0.1.0), (4.6.0.2.1), (4.6.1), (4.6.4), (4.7) or (4.8.0) in \cite[(4.$\infty$)]{Fujita92}.
\par
If $(W_{2},\mathcal{K}_{2})$ is either (4.6.0.0), (4.6.0.1.0), (4.6.0.2.1), (4.6.1), (4.7) or (4.8.0) in \cite[(4.$\infty$)]{Fujita92}, then we see that $\kappa(K_{X}+\mathcal{L})=-\infty$.
\par
If $(W_{2},\mathcal{K}_{2})$ is the type (4.6.4) in \cite[(4.$\infty$)]{Fujita92}, then by the same argument as in \cite[Section 4]{Fujita92} we see that there exist a normal Gorenstein projective variety $W_{3}$, an ample line bundle $\mathcal{K}_{3}$ on $W_{3}$ and a birational morphism $\mu_{2}: W_{2}\to W_{3}$ such that $W_{3}$ has the same singularities as $W_{2}$, $K_{W_{2}}+2\mathcal{K}_{2}=\mu_{2}^{*}(K_{W_{3}}+2\mathcal{K}_{3})$ and 
$K_{W_{3}}+2\mathcal{K}_{3}$ is ample, that is, $\tau(\mathcal{K}_{3})<2$.
Here we note that $\kappa(K_{X}+\mathcal{L})=\kappa(K_{W_{2}}+\mathcal{K}_{2})=\kappa(K_{W_{3}}+\mathcal{K}_{3})$.
\par
If $\tau(\mathcal{K}_{3})\leq 1$, 
then $\kappa(K_{X}+\mathcal{L})=\kappa(K_{W_{3}}+\mathcal{K}_{3})\geq 0$.
\par
If $\tau(\mathcal{K}_{3})>1$, then $(W_{3},\mathcal{K}_{3})$ is either (4.7) or (4.8.0) in \cite[(4.$\infty$)]{Fujita92}by the same argument as in \cite[Section 4]{Fujita92} and we have $\kappa(K_{X}+\mathcal{L})=\kappa(K_{W_{3}}+\mathcal{K}_{3})=-\infty$.
\\
\par
By the above argument, we get the following:
\begin{Theorem}\label{TH2}
Let $(X,\mathcal{L})$ be a polarized manifold of dimension $n=4$.
\\
\\
{\rm (1)} $\kappa(K_{X}+\mathcal{L})\geq 0$ if and only if there exist a normal Gorenstein projective variety $W_{3}$ with only isolated terminal singularities, an ample line bundle $\mathcal{H}_{3}$ on $W_{3}$, and a birational morphism $\Phi: X\to W_{3}$ such that $\tau(\mathcal{H}_{3})\leq 1$ and $h^{0}(2m(K_{X}+\mathcal{L}))=h^{0}(m(K_{W_{3}}+\mathcal{H}_{3}))$ for every positive integer $m$.
\\
\\
{\rm (2)} $\kappa(K_{X}+\mathcal{L})=-\infty$ if and only if $(X,\mathcal{L})$ satisfies one of the following:
\begin{itemize}
\item [\rm (2.1)] $(X,\mathcal{L})$ is either {\rm (1), (2), (3), (4), (5), (6), (7.1), (7.5), (7.6), (7.7)} or {\rm (7.8)} in Theorem {\rm \ref{TH1}}.
\item [\rm (2.2)] There exist a normal projective variety $W_{3}$ with only isolated terminal singularities, an ample line bundle $\mathcal{H}_{3}$ on $W_{3}$, and a birational morphism $\Phi: X\to W_{3}$ such that $(W_{3}, \mathcal{H}_{3})$ is either {\rm (4.2), (4.4.0), (4.4.1), (4.4.2), (4.6.0.0), (4.6.0.1.0), (4.6.0.2.1), (4.6.1), (4.7)} or {\rm (4.8.0)} in {\rm \cite[(4.$\infty$)]{Fujita92}}.
\end{itemize}
\end{Theorem}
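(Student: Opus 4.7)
The plan is to leverage the adjunction-theoretic classification of $4$-dimensional polarized manifolds, iterating the reduction process (first reduction, second reduction, $2\frac{1}{2}$-reduction, and so on) until either a nef-value bound detects $\kappa(K_X+\mathcal{L})\geq 0$ at some stage, or $(X,\mathcal{L})$ is identified as one of the exceptional types where $\kappa=-\infty$ can be read off directly.

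First, I would apply Theorem \ref{TH1} to dispose of cases $(1)$--$(6)$ and $(7.1)$, $(7.5)$--$(7.8)$ by direct inspection; note that $n=4$ automatically rules out $(7.2)$--$(7.4)$. In each remaining type, $(M,\mathcal{A})$ admits either a projective bundle structure or a fibration $f:(M,\mathcal{A})\to Y$ with $K_M+c\mathcal{A}\sim f^*H$ for some $c\geq 2$ and ample $H$ on $Y$, so $K_M+\mathcal{A}=f^*H-(c-1)\mathcal{A}$ has $\kappa=-\infty$ and hence so does $K_X+\mathcal{L}$. This establishes case $(2.1)$.

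Next, assume $(X,\mathcal{L})$ is of type $(7.9)$, so $K_M+2\mathcal{A}$ is nef and big. Take the second reduction $(W,\mathcal{D})$ with $\phi_2:M\to W$, and set $\mathcal{K}:=K_W+2\mathcal{D}$ (ample). Using the identities already collected in the excerpt before Theorem \ref{TH2}---namely $2(K_W+\mathcal{D})=K_W+\mathcal{K}$ and $h^0(2m(K_X+\mathcal{L}))=h^0(m(K_W+\mathcal{K}))$---one has $\kappa(K_X+\mathcal{L})=\kappa(K_W+\mathcal{D})=\kappa(K_W+\mathcal{K})$. Now split on the nef value $\tau(\mathcal{K})$: if $\tau(\mathcal{K})\leq 1$, then $K_W+\mathcal{K}$ is nef and big, so $\kappa(K_W+\mathcal{K})\geq 0$ by Shokurov's theorem already invoked in the introduction. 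Taking $W_3:=W$, $\mathcal{H}_3:=\mathcal{K}$, and $\Phi:=\phi_2\circ\phi_1$ establishes the forward direction of $(1)$.

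If instead $\tau(\mathcal{K})>1$, I would invoke Fujita's refined classification \cite[Section 4]{Fujita92} to place $(W,\mathcal{K})$ (or its factorial stage $M^\sharp$) into one of the enumerated types. Every type except $(4.4.4)$ and $(4.6.4)$ leads directly to $\kappa(K_X+\mathcal{L})=-\infty$ and fits the list in $(2.2)$. In the residual cases $(4.4.4)$ and $(4.6.4)$ one performs a further $2\frac{1}{2}$-reduction (and in case $(4.6.4)$ a subsequent contraction); at each step the Gorenstein and isolated-terminal properties persist by tracking exceptional divisors, the $h^0$-identity $h^0(2m(K_X+\mathcal{L}))=h^0(m(K_{W_k}+\mathcal{K}_k))$ is preserved, and the nef value of the new polarization strictly decreases. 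After at most two further reductions one reaches $(W_3,\mathcal{H}_3)$ either satisfying $\tau(\mathcal{H}_3)\leq 1$ (giving case $(1)$) or landing in one of the terminal Fujita types $(4.7)$, $(4.8.0)$ (giving case $(2.2)$). The converse implications are immediate: in $(1)$, $\tau(\mathcal{H}_3)\leq 1$ plus Shokurov yields $\kappa(K_{W_3}+\mathcal{H}_3)\geq 0$, and in $(2.2)$ the explicit geometry of each listed type forces $\kappa=-\infty$.

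The main obstacle is the bookkeeping through the iterated reductions: ensuring that Gorenstein and isolated-terminal properties survive the $2\frac{1}{2}$-reduction and its successor (using \cite[Proposition 7.5.6]{BeSo-Book} and \cite[(2.4) Theorem]{Fujita92}), and verifying that the $h^0$-identity survives composition of pullbacks and contractions. The most delicate piece is case $(4.4.4)$: one must check that the disjoint divisors $E_i\cong\mathbb{P}^3$ with $E_i|_{E_i}\cong\mathcal{O}_{\mathbb{P}^3}(-1)$ lie inside $\mathrm{reg}(W)$, so that they lift to the factorial stage via $\psi^{-1}(E_i)\cong E_i$, giving a compatible contraction $\mu^\sharp:M^\sharp\to W_2^\sharp$ on which Fujita's analysis can be re-applied.
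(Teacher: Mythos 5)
Your proposal follows essentially the same route as the paper: dispose of types (1)--(7.8) of Theorem \ref{TH1} directly, pass to the second reduction $(W,\mathcal{D})$ in case (7.9), split on the nef value $\tau(\mathcal{K})$, and when $\tau(\mathcal{K})>1$ run Fujita's classification \cite[(4.$\infty$)]{Fujita92} with iterated $2\frac{1}{2}$-reductions to handle the residual types (4.4.4) and (4.6.4), tracking the Gorenstein/isolated-terminal properties and the identity $h^{0}(2m(K_{X}+\mathcal{L}))=h^{0}(m(K_{W_{k}}+\mathcal{K}_{k}))$ through each contraction. This matches the paper's argument, including the key point that $E_{i}\subset\mbox{reg}(W)$ so that $\psi^{-1}(E_{i})\cong E_{i}$ on the factorial stage.
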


By Theorems \ref{Theorem4.1}, \ref{Theorem4.1-big}, \ref{T-TH1}, \ref{Theorem4.2} and \ref{TH2}, we get Theorem \ref{MainTheorem1} in the Introduction.
As a corollary, we have the following.

\begin{Corollary}\label{Corollary4.2}
$m_{4}({\rm SM})\leq 6$.
Here the assumption $({\rm SM})$ is the following.
\\
$({\rm SM})$: $X$ is smooth.
\end{Corollary}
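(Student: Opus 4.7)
The plan is to extract Corollary \ref{Corollary4.2} directly from Theorem \ref{MainTheorem1} by case analysis on $\kappa(K_{X}+L)$. Recall that by definition we need to produce some integer $p$ such that $h^{0}(p(K_{X}+L))>0$ holds for every smooth polarized 4-fold $(X,L)$ with $\kappa(K_{X}+L)\geq 0$, and then conclude $m_{4}(\mathrm{SM})\leq p$. I will show that $p=6$ works, which amounts to verifying $h^{0}(6(K_{X}+L))>0$ in each possible range of $\kappa(K_{X}+L)$.

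Since $\dim X=4$ and $\kappa(K_{X}+L)\geq 0$, the Kodaira dimension $\kappa(K_{X}+L)$ lies in $\{0,1,2,3,4\}$. I will write $6=2m$ with $m=3$ and apply Theorem \ref{MainTheorem1} to each subinterval:
\begin{itemize}
\item If $0\leq \kappa(K_{X}+L)\leq 2$, then Theorem \ref{MainTheorem1}(a) gives $h^{0}(2m(K_{X}+L))>0$ for every $m\geq 1$; taking $m=3$ yields $h^{0}(6(K_{X}+L))>0$.
\item If $\kappa(K_{X}+L)=3$, then Theorem \ref{MainTheorem1}(b) gives $h^{0}(2m(K_{X}+L))>0$ for every $m\geq 2$; taking $m=3$ yields $h^{0}(6(K_{X}+L))>0$.
\item If $\kappa(K_{X}+L)=4$, then Theorem \ref{MainTheorem1}(c) gives $h^{0}(2m(K_{X}+L))>0$ for every $m\geq 3$; the choice $m=3$ again yields $h^{0}(6(K_{X}+L))>0$.
\end{itemize}

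In every case $h^{0}(6(K_{X}+L))>0$, so by the very definition of $\mathcal{M}_{4}(\mathrm{SM})$ we have $6\in\mathcal{M}_{4}(\mathrm{SM})$, hence $m_{4}(\mathrm{SM})=\min\mathcal{M}_{4}(\mathrm{SM})\leq 6$. There is essentially no obstacle here beyond bookkeeping: all the substantive work lives in Theorem \ref{MainTheorem1} (which in turn is extracted, via Theorem \ref{TH2} and the adjunction machinery, from Theorems \ref{Theorem4.1}, \ref{Theorem4.1-big}, \ref{T-TH1}, \ref{Theorem4.1.5} and \ref{Theorem4.2}). The only thing one must check carefully is that the uniform choice $m=3$ simultaneously meets all the thresholds $m\geq 1,\,m\geq 2,\,m\geq 3$ appearing in parts (a), (b), (c), which is clear. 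Note also that part (d) of Theorem \ref{MainTheorem1} is not needed for this weaker uniform bound, although it would supply additional information when $\kappa(X)\geq 0$.
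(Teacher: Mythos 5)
Your proposal is correct and follows exactly the route the paper intends: Corollary \ref{Corollary4.2} is stated as an immediate consequence of Theorem \ref{MainTheorem1}, and your case analysis on $\kappa(K_{X}+L)\in\{0,\dots,4\}$ with the uniform choice $m=3$ (so $2m=6$) is precisely the bookkeeping needed to see that $6\in\mathcal{M}_{4}({\rm SM})$. Nothing is missing.
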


\begin{flushright}
Yoshiaki Fukuma \\
Department of Mathematics \\
Faculty of Science \\
Kochi University \\
Akebono-cho, Kochi 780-8520 \\
Japan \\
E-mail: fukuma@kochi-u.ac.jp
\end{flushright}

\end{document}